\documentclass[11pt]{amsart}
\usepackage{amsthm, amsfonts,  amsmath, amssymb}
\usepackage{dsfont}
\allowdisplaybreaks
\usepackage[all,cmtip]{xy}
\usepackage{tkz-euclide}
\usepackage{tikz-cd}
\usetikzlibrary{arrows}
\usepackage{mathtools}  % for paired delimiters\usepackage{comment}
\usepackage{mathrsfs}   % for script capitals
\usepackage[all]{xy}
\usepackage{url,enumerate}
\usepackage{amsmath}
\usepackage{setspace}
\usepackage[a4paper, top = 1.2in, bottom = 1.2in, left = 1.2in, right = 1.2in]{geometry}
\usepackage{cases}
\usepackage{hyperref}
\UseComputerModernTips

\newtheorem{thm}{Theorem}[section]
\newtheorem{lem}[thm]{Lemma}
\newtheorem{prop}[thm]{Proposition}

\theoremstyle{definition}

\newtheorem{dfn}[thm]{Definition}
\usepackage[OT2,T1]{fontenc}
\DeclareSymbolFont{cyrletters}{OT2}{wncyr}{m}{n}
\DeclareMathSymbol{\Sha}{\mathalpha}{cyrletters}{"58}

\newtheorem{remark}[thm]{Remark}
\theoremstyle{plain}
\newtheorem{cor}[thm]{Corollary}
\numberwithin{equation}{section}

\newcommand{\N}{\mathbb{N}}

\newcommand{\R}{\mathbb{R}}

\newcommand{\mcA}{\mathcal{A}}
\newcommand{\mcD}{\mathcal{D}}
\newcommand{\mcH}{\mathcal{H}}
\newcommand{\mcF}{\mathcal{F}}

\newcommand{\mcL}{\mathcal{L}}

\newcommand{\mcR}{\mathcal{R}}
\newcommand{\mcS}{\mathcal{S}}

\newcommand{\mcal}[1]{\mathcal{#1}}

\begin{document}
	\title[Approximate controllability of the GBH equation]{Approximate Controllability of the generalized Burgers-Huxley equation in one dimension}

\author[A. Patel]{Aman Patel}
\email{amanpat@iitk.ac.in}
\address{
	Department of Mathematics \\
	Indian Institute of Technology Kanpur\\
	Kanpur-208016\\
	INDIA
}
\author[M. M. Bapu]{Mohmedmunavvar Mubarak Bapu}
\email{munavvar.bapu87@gmail.com}
\address{
	Department of Mathematics \\
	Indian Institute of Technology Kanpur\\
	Kanpur-208016\\
	INDIA
}
\author[M. Biswas]{Mrinmay Biswas}
\email{mbiswas@iitk.ac.in}
\address{
	Department of Mathematics \\
	Indian Institute of Technology Kanpur\\
	Kanpur-208016\\
	INDIA
}

	%\doublespacing
	
	\begin{abstract}
		The generalized Burgers-Huxley (GBH) equation is a prototype model that describes the interplay among reaction, convection, and diffusion. In this article, we explore the controllability of this model by means of an interior control supported in an arbitrary non-empty open subset of the domain. We establish that the GBH equation is not globally approximately controllable in a given time. However, it is possible to steer the system from any steady state to an arbitrarily small neighborhood of another steady state in some suitable time by means of a localized interior control, provided that both steady states lie in the same connected component of the set of steady states. 
		 \newline
		 \newline
		 \noindent \textit{Keywords.} Generalized Burgers-Huxley equation;  Approximate controllability; Pole shifting; Lyapunov functional 
		 \newline
		 \noindent \textit{2020 Mathematics Subject Classification.} 93B05,	93B52 , 93C20, 	93D15,	93D05,  35Q92, 35K57
	\end{abstract} \maketitle
	
	%\subjclass[2020]{93Dxx, 93D15, 35Q92, 93Bxx, 35K57}

%\usepackage[notref]{showkeys} %this_is_for_display_of_cite_name_and_equations

%\date{}

%\author{Patel A, Bapu M, Biswas M}
%\onehalfspacing

%\begin{document}

\maketitle

\section{Introduction}
The generalized Burgers--Huxley equation is a nonlinear partial differential equation that arises in several areas, including fluid mechanics, transport theory, population dynamics, and nerve signal propagation. It takes the form
\begin{equation}\label{equation_one}
    u_t = u_{xx} - \alpha u^\delta u_x
    + \beta u(1-u^\delta)(u^\delta-\gamma),
\end{equation}
where $\alpha, \beta, \gamma$ are positive constants and $\delta\in\mathbb N$. This equation comprises three main components: the diffusion term $  u_{xx}  $, which accounts for dissipative effects; the nonlinear convection term $  -\alpha u^\delta u_x  $, which describes transport phenomena; and the nonlinear reaction term $  \beta u(1-u^\delta)(u^\delta-\gamma)  $, which models growth and damping interactions. The combined presence of these mechanisms makes the mathematical analysis of the equation more involved. Equation~\eqref{equation_one} generalizes the classical Burgers–Huxley equation, which is recovered by taking $\delta=1$.

Several mathematical properties of Burgers--Huxley type equations such as existence of solutions, stability and wave propagation have been studied by many authors; see for example \cite{khattak2007,abdou2010} and the references therein. Different analytical aspects of this equation such as existence of solutions, asymptotic behavior, stability and traveling wave solutions have been studied by several authors; see for instance \cite{mohan2021gbh,khattak2007,abdou2010} and the references therein.

Several controllability and stabilization results for semilinear and quasilinear parabolic equations have been established during the last few decades due to their importance in physics, biology and engineering applications; see for instance \cite{lions1988,fursikov1996,coron2007,fernandez2014,zuazua2005}. In particular, null controllability, approximate controllability and stabilization problems for nonlinear parabolic equations have been extensively investigated by using Carleman estimates, observability inequalities, spectral methods and fixed point techniques. Controllability problems for convection--diffusion equations and Burgers type systems have also attracted considerable attention because of the interaction between nonlinear transport and diffusion effects. From the control theoretic point of view, stabilization and feedback control problems for generalized Burgers--Huxley equations have received attention in recent years. Boundary feedback stabilization results were obtained in \cite{singh2023boundary}, where the authors proved global existence of strong solutions together with exponential stabilization results under suitable boundary controls. Feedback stabilizability around nonconstant stationary solutions and Burgers--Huxley equations with memory were further investigated in \cite{akram2025feedback,bag2026boundary}. Motivated by these developments, in the present article we investigate the approximate controllability properties of the generalized Burgers--Huxley equation by means of controls acting on a fixed proper subset of the spatial domain. 
%More precisely, we study whether the solution can be driven arbitrarily close to a desired target state in finite time by using localized controls. In the present work, we establish non-approximate controllability results for both distributed and boundary controlled generalized Burgers--Huxley equations. Motivated by these negative results, we further investigate whether approximate controllability may still hold for some particular classes of initial and target data.

In the present article, we study the following internally controlled generalized Burgers--Huxley equation with homogeneous Dirichlet boundary conditions
\begin{equation}\label{cntrl_eqn}
\begin{cases}
u_t=u_{xx}-\alpha u^\delta u_x+\beta u(1-u^\delta)(u^\delta-\gamma)+\chi_\omega f,
& \text{in } (0,T)\times(0,1),\\
u(t,0)=u(t,1)=0,
& t\in(0,T),\\
u(0,x)=u_0(x),
& x\in(0,1),
\end{cases}
\end{equation}
where $  \alpha,\beta,\gamma $ are positive constants, $  \delta \in \mathbb{N}  $,  $  f \in L^2((0,T)\times(0,1))  $ is a distributed control localized in the subdomain $  \omega: = (b,c) \subset (0,1)  $ with $  0<b<c<1  $, and $\chi_\omega$ denotes the characteristic  function of $\omega$.

The present work extends the study of approximate controllability to the generalized Burgers–Huxley (GBH) equation in one dimension, which introduces two significant new difficulties beyond what was encountered in the \cite{Shirshendu20}. The first is the presence of the  Burgers-type nonlinear convection term $-\alpha u^\delta u_x ,$ which introduces a genuinely nonlinear transport structure. Unlike the purely reactive cubic nonlinearity in the FHN equation, this term couples the spatial derivative of the state variable with the state itself, making the quasi-static deformation argument considerably more delicate. Specifically, when linearizing along a path of steady states, the Burgers term produces a parameter-dependent first-order perturbation in the linearized operator, which must be carefully tracked through the spectral analysis, the construction of the eigenbasis, and the Lyapunov estimate. As a result, the quasi-static deformation framework of Coron–Trélat \cite{Coron2004}, as adapted in \cite{Shirshendu20}, does not carry over directly, and new estimates are required to control the remainder terms arising from this convective contribution.
The second difficulty lies in the spectral analysis of the associated parameter-dependent linearized operator. The linearized operator for 
the GBH equation is not in Sturm--Liouville form due to the 
presence of the convection term. To address this, we introduce a suitable transformation and a conjugation by an appropriate weight function to reduce it to a Sturm–Liouville operator, which reduces the eigenvalue problem to an equivalent Sturm–Liouville form (see Subsection~\ref{Spectral_Analysis_A}). This reduction is essential for obtaining a precise spectral characterization of the system. This necessitates working in weighted Lebesgue and Sobolev spaces equipped with a weighted inner product, and all subsequent estimates, projections, and  basis arguments must be carried out in this weighted functional framework. The interplay among the weight function, the Burgers velocity, and the spectral parameter introduces nontrivial adjustments throughout the analysis, from the construction of the orthonormal basis to the design of the Lyapunov functional. These additional analytical difficulties make the controllability study of the generalized Burgers–Huxley equation substantially more delicate than the corresponding problems for standard semilinear heat-type equations.

We emphasize that our proof does not rely on the uniform boundedness result established in Theorem~3.21 of \cite{Shirshendu20}. This suggests that the present method may be useful for studying systems that do not satisfy, or for which one cannot verify, such a uniform boundedness condition.
 
To the best of our knowledge, the combination of quasi-static deformation techniques with a weighted Sturm--Liouville spectral analysis, carried out entirely in weighted spaces, for a nonlinear convection-reaction-diffusion equation of Burgers--Huxley type has not been addressed in the existing controllability literature, making the present work a novel contribution in this direction.

In this article, we are interested in the approximate controllability of the system~\eqref{cntrl_eqn}. We now provide the definition of approximate controllability for the system \eqref{cntrl_eqn}. 

\begin{dfn}[Approximate controllability]\label{def:approx_controllability}
Let $T>0$. The system \eqref{cntrl_eqn} is said to be approximately controllable in time $T$ in $L^2(0,1)$ if, for every $u_0,u_T\in L^2(0,1)$ and every $\epsilon>0$, there exists a control
$f\in L^2((0,T)\times(0,1))$ such that the corresponding solution $u$ of \eqref{cntrl_eqn} with initial datum $u_0$ satisfies
\[
\|u(T)-u_T\|_{L^2(0,1)}\le \epsilon.
\]
\end{dfn}

Our first counter observation regarding approximate controllability via interior control is the following.
\begin{thm}\label{thm1}
    Let $T>0$. Then, the system \eqref{cntrl_eqn} is not approximate controllable in $L^2(0,1)$ with respect to the set of controls $f\in L^2((0,T)\times(0,1)).$
\end{thm}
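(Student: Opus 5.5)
The plan is to show that, on a subinterval of $(0,1)\setminus\overline{\omega}$, the solution at time $T$ obeys an a priori $L^2$ bound that depends on neither $u_0$ nor $f$. Once we have that, any target $u_T$ that is large on that subinterval cannot be reached approximately. The mechanism is the strongly dissipative leading term of the reaction, $\beta u(1-u^\delta)(u^\delta-\gamma)=-\beta u^{2\delta+1}+\text{lower order}$. For every $\delta\in\N$ (odd or even) it satisfies
\[
\beta u^2(1-u^\delta)(u^\delta-\gamma)\le -\tfrac{\beta}{2}u^{2\delta+2}+C_0
\]
for all real $u$, with $C_0$ depending only on $\alpha,\beta,\gamma,\delta$.

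Fix $0<b'<b$ and a cutoff $\varphi\in C^\infty([0,1])$ with $0\le\varphi\le1$, $\varphi=1$ on $[0,b']$ and $\varphi=0$ on $[b,1]$. We do not need $\varphi$ to vanish at $x=0$, because $u(t,0)=0$ kills the boundary terms there. Multiply \eqref{cntrl_eqn} by $\varphi^m u$ with $m$ large; since $\varphi\chi_\omega=0$ the control disappears. We first argue for smooth data and controls and then pass to general $u_0\in L^2$ and $f\in L^2$ by the well-posedness and continuous dependence of \eqref{cntrl_eqn}. The terms are handled as follows.

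\begin{itemize}
\item \textbf{Diffusion.} We have $\int\varphi^m u u_{xx}=-\int\varphi^m u_x^2-\int(\varphi^m)'uu_x$. The second term is at most $\tfrac12\int\varphi^m u_x^2+C\int\varphi^{m-2}u^2$.
\item \textbf{Convection.} Integrating by parts, $\int\varphi^m u^{\delta+1}u_x=-\frac{1}{\delta+2}\int(\varphi^m)'u^{\delta+2}$. The boundary terms vanish by Dirichlet at $0$ and $\varphi(b)=0$.
\item \textbf{Absorption of the remainders.} The terms $C\int\varphi^{m-2}u^2$ and $C\int\varphi^{m-1}|u|^{\delta+2}$ are absorbed by Young's inequality into $\tfrac{\beta}{4}\int\varphi^m u^{2\delta+2}+C$. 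The exponents fit because $2<\delta+2<2\delta+2$, and we choose $m$ so large that the leftover powers of $\varphi$ are nonnegative.
\end{itemize}

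With $y(t)=\int_0^1\varphi^m u^2\,dx$ this gives
\[
y'(t)\le -\tfrac{\beta}{2}\int\varphi^m u^{2\delta+2}+C.
\]
By Jensen/H\"older with the weight $\varphi^m$ we have $\int\varphi^m u^{2\delta+2}\ge y^{\delta+1}\big/\big(\int\varphi^m\big)^{\delta}$. Hence
\[
y'\le -c\,y^{\delta+1}+C,
\]
with $c,C$ independent of $u_0$ and $f$.

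A comparison argument for this ODE gives $y(T)\le K(T):=\max\{(2C/c)^{1/(\delta+1)},\,(c\,\delta\,T/2)^{-1/\delta}\}$ for every initial value $y(0)\in[0,\infty)$. Consequently
\[
\|u(T)\|_{L^2(0,b')}^2\le K(T)
\]
for all $u_0\in L^2(0,1)$ and all $f\in L^2((0,T)\times(0,1))$. Now take $u_T\equiv M$ with $M^2 b'>4K(T)$ and $\epsilon=\sqrt{K(T)}$. The triangle inequality gives
\[
\|u(T)-u_T\|_{L^2}\ge M\sqrt{b'}-\sqrt{K(T)}>\epsilon,
\]
which contradicts Definition~\ref{def:approx_controllability}.

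The main obstacle is the careful handling of the convective term $-\alpha u^\delta u_x$ inside the localized energy estimate. One must check that every term produced by differentiating the cutoff has a growth exponent strictly below $2\delta+2$ and a power of $\varphi$ that Young's inequality can match. A second point to watch is justifying the computation for merely $L^2$ data; this should be handled by the regularizing effect and the approximation argument mentioned above.
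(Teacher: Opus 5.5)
Your proof is correct. It uses the same mechanism as the paper's proof: a localized weighted $L^2$ energy whose weight is supported in $[0,b]$, so the control term drops out. Every term produced by differentiating the weight is then absorbed by the dissipative term $-\beta\int(\text{weight})\,u^{2\delta+2}$.

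The differences lie in the choice of weight and, more substantially, in the final step.

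\textbf{The paper's route.} The paper uses $g=x^N(b-x)^N$, which vanishes to high order at both ends of $[0,b]$. It spends all of the dissipation in Young's inequality and obtains $\frac{d}{dt}\int_0^b g u^2\le L(b,N)$. Integrating gives $\int_0^b g u^2(T)\le \int_0^b g u_0^2+2T L(b,N)$. As a result, the unreachable targets in \eqref{assumption_uT} depend on $u_0$, and the threshold grows linearly in $T$.

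\textbf{Your route.} You keep half of the dissipation and use H\"older to reach $y'\le -c\,y^{\delta+1}+C$. The comparison argument then gives a bound on $\|u(T)\|_{L^2(0,b')}$ that is independent of both $u_0$ and $f$, and uniform for $T$ bounded away from $0$. This is strictly more information. The reachable set at time $T$ from \emph{every} initial state lies in one fixed ball on $(0,b')$. So a single target works for all $u_0$, which removes the $u_0$-dependence stated in the Remark after the paper's proof.

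\textbf{What the paper's weight buys.} Because $g$ and $g'$ vanish at $x=0$, no boundary term ever involves $u(t,0)$. This is why Theorem~\ref{thm2}, where $u(t,0)$ is itself a control, follows verbatim. Your cutoff instead relies on $u(t,0)=0$ to kill the boundary terms. For the boundary-control system you would take $\varphi$ compactly supported in $(0,b)$; the rest of your argument goes through unchanged with that choice.

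\textbf{A minor point on justification.} You propose passing to $L^2$ data via continuous dependence. This is not supported by the well-posedness results quoted in the paper, which assert uniqueness of weak solutions only when $\beta>(2^\delta\alpha)^2$ or $\delta\le 2$. You do not need this step, however. Negating approximate controllability requires only one initial datum. Take $u_0=0$ (or any $u_0\in H^1_0(0,1)$): the strong solution then exists and is unique for every $f\in L^2$, and your multiplier computation and the absolute continuity of $y$ are rigorous.
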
 
Analogous observations hold for the boundary control system. Let us consider the following boundary control system
\begin{equation}\label{bdry_cntrl_eqn}
    \begin{cases}
         u_t=u_{xx} -\alpha u^\delta u_x+\beta u(1-u^\delta)(u^\delta -\gamma),\;\;\;& {\text{in}} \;\; (0,T)\times(0,1),\\
         u(t,0)=f_1(t),\;\;u(t,1)=f_2(t),\;\;\;\;& t\in(0,T),\\
          u(0,x)=u_0(x)\;\; &x\in (0,1),\\
    \end{cases}
\end{equation}
where $f_i\in L^2(0,T)$ as control for $i=1,2$. In this case, we have the following result.
\begin{thm}\label{thm2}
    Let $T>0$. Then, the system \eqref{bdry_cntrl_eqn} is not $L^2(0,1)$-approximately controllable with respect to the set of controls $f_i\in L^2(0,T),\;i=1,2.$
\end{thm}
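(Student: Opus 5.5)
The plan is to exhibit, for a fixed $T>0$, a target $u_T\in L^2(0,1)$ and a tolerance $\epsilon>0$ that no boundary data $f_1,f_2\in L^2(0,T)$ can reach. The obstruction comes from the superlinear dissipative reaction term. For $u$ large, $\beta u(1-u^\delta)(u^\delta-\gamma)\approx -\beta u^{2\delta+1}$. This produces a ``coming down from infinity'' effect: in the interior, solutions are bounded by a universal profile that does not depend on the initial or boundary data. This mirrors the argument one would use for Theorem~\ref{thm1}, where the control acts in $\omega$ and the bound is taken on a subinterval away from $\omega$. Here the controls act at the endpoints, so the bound is taken on any compact subinterval $[a,b]\subset(0,1)$.

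\textbf{Step 1 (comparison principle).} The nonlinearity $g(u)=\beta u(1-u^\delta)(u^\delta-\gamma)$ is locally Lipschitz, and the convection term $-\alpha u^\delta u_x$ is a first-order term with locally bounded coefficient. So the standard parabolic comparison principle holds for classical sub- and supersolutions on a subdomain $(0,T)\times(a,b)$, with no condition on the lateral boundary values once the supersolution is $+\infty$ there. I would work with smooth data first and pass to the $L^2$ and weak setting by density and continuous dependence. This is routine but should be stated.

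\textbf{Step 2 (universal supersolution).} The main obstacle is building a supersolution $W(t,x)$ on $(0,T]\times(a,b)$ that blows up as $t\to0^+$ and as $x\to a^+$ or $x\to b^+$, and is finite in the interior. First try a separated form $W(t,x)=\phi(t)+\psi(x)$. Take $\phi(t)=C t^{-1/(2\delta)}$, which solves $\phi'=-c\phi^{2\delta+1}$. Take $\psi(x)=K\big((x-a)^{-m}+(b-x)^{-m}\big)$ with $m=1/\delta$, the Keller--Osserman exponent. With this exponent $\psi''$ and $\psi^{2\delta+1}$ both scale like $d^{-(m+2)}$, where $d$ is the distance to $\{a,b\}$. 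The convection term $\alpha W^\delta|\psi'|$ scales like $d^{-(\delta m+m+1)}=d^{-(m+2)}$, the same order. The reaction term $-\beta W^{2\delta+1}$ is of order $d^{-(2\delta m+m)}=d^{-(m+2)}$ as well. Since $\beta$ multiplies the leading term with a favorable sign, I would take $K$ large. The reaction then scales as $K^{2\delta+1}$, while diffusion scales as $K$ and convection as $K^{\delta+1}$, so the reaction dominates both. Cross terms between $\phi$ and $\psi$ are handled with $(\phi+\psi)^{2\delta+1}\ge \phi^{2\delta+1}+\psi^{2\delta+1}$ together with Young's inequality. Lower-order terms are absorbed by adding a constant $M$ chosen larger than the largest zero of $g$. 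The exponents must be checked with care, because the convection balances exactly at the critical scaling; if it does not close, I would instead take $m$ slightly below $1/\delta$ and accept a weaker singularity.

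\textbf{Step 3 (conclusion).} By comparison, applied to both $u$ and $-u$, any solution satisfies $|u(T,x)|\le W(T,x)$ on $(a,b)$, whatever $u_0,f_1,f_2$ are. The same argument applies to $-u$ because $g$ is odd in the appropriate sense, or by direct analysis of the negative branch, since $-u^{2\delta+1}$ also dominates for $u\to-\infty$. Let $B=\sup_{[a',b']}W(T,\cdot)<\infty$ for some fixed $[a',b']\Subset(a,b)$. Choose the target $u_T=2B\,\chi_{[a',b']}$ and $\epsilon=\tfrac12 B\sqrt{b'-a'}$. Then $\|u(T)-u_T\|_{L^2}\ge B\sqrt{b'-a'}>\epsilon$ for every admissible control, which contradicts approximate controllability.
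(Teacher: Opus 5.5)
Your argument is correct in substance, but it takes a different route from the paper. The paper proves Theorem~\ref{thm2} by repeating the proof of Theorem~\ref{thm1}. It tests with $gu$, where $g(x)=x^N(b-x)^N$ and $N>4$, so all boundary terms vanish whatever the boundary values are. It then absorbs $\tfrac12 g''u^2$, $\tfrac{\alpha}{\delta+2}g'u^{\delta+2}$ and $\beta(1+\gamma)gu^{\delta+2}$ into $-\beta g u^{2\delta+2}$ by Young's inequality. This gives $\int_0^b g\,u(T)^2\,dx\le\int_0^b g\,u_0^2\,dx+2TL(b,N)$, a weighted $L^2$ bound that depends on $u_0$ and grows linearly in $T$. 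That is enough only because $u_0$ may be fixed.

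You instead use a pointwise Keller--Osserman supersolution and comparison. This gives a stronger, universal statement: $|u(T)|\le W(T,\cdot)$ on $(a,b)$, independently of $u_0$, $f_1$ and $f_2$, with $W(T,\cdot)$ nonincreasing in $T$. Your exponent count closes at the critical value $m=1/\delta$. Indeed, $\alpha W^\delta|\psi'|\le\frac{\beta}{4}W^\delta\psi^{\delta+1}\le\frac{\beta}{4}W^{2\delta+1}$ as soon as $K^\delta\ge 8\alpha/(\beta\delta)$, and $|\phi'|$ and $\psi''$ are absorbed in the same way.

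So your fallback is unnecessary, and it points the wrong way. For $m<1/\delta$, the diffusion term $\psi''\sim d^{-m-2}$ beats $\psi^{2\delta+1}\sim d^{-m(2\delta+1)}$ near the endpoints; any slack would come from $m>1/\delta$.

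The price of your route is regularity. The paper only needs to test the equation against $gu$. Your comparison step tests the difference inequality with $(u-W)^+$ on a shrunken cylinder $(t_0,T]\times(a+\eta,b-\eta)$, where $W>|u|$ on the parabolic boundary. For that you need $u$ bounded on $[0,T]\times[a,b]$ and $H^1$ there.

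The paper gives no solution theory for \eqref{bdry_cntrl_eqn} with $f_i\in L^2(0,T)$, so your ``density and continuous dependence'' step is not available off the shelf. It is cleaner to fix $u_0=0$, since one bad triple $(u_0,u_T,\epsilon)$ suffices. Then use local parabolic regularity on $[0,T]\times[a,b]$ with $[a,b]\Subset(0,1)$, where the lateral data never enter.
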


The proofs of Theorems~\ref{thm1} and~\ref{thm2} are given in Section~\ref{LOGAC}. The proofs of Theorems \eqref{thm1} and \eqref{thm2} are inspired by the approach developed in \cite{Furshikov_notapprox}, which relies on suitable weighted energy estimates for the system. 

Before stating our next result, we provide the definition of steady states.
\begin{dfn}(Steady state).
  A function $u\in C^2([0,1])$  is called a steady 
state of the control system \eqref{cntrl_eqn} if $u$ satisfies
\begin{equation}\label{steady_state}
    \begin{cases}
         u_{xx} -\alpha u^\delta u_x+\beta u(1-u^\delta)(u^\delta -\gamma) = 0,\;\;\;& {\text{in}} \;\; (0,b)\cup(c,1),\\
         u(0)=u(1)=0.
    \end{cases}
\end{equation}
\end{dfn}

Let $\mcS$ denote the set of all steady states endowed with the $C^2$-topology. 
We address the following controllability problem: is it possible to steer the system 
from any given steady state to another while remaining within the same path connected component of $\mcS$?
The following theorem states the main positive result in this direction.
\begin{thm}\label{Main_thm}
    Let $u_0,u_1 \in \mcS$ lie in the same path connected component of $\mcS$. 
Then, for any given $\hat{\delta}>0$, there exists $T_{\hat{\delta}}>0$ such that for every $T \ge T_{\hat{\delta}}$, 
there exists a control $f \in L^2((0,T)\times(0,1))$ such that the solution $u$ of \eqref{cntrl_eqn} satisfies
\begin{align}\label{u_T_minus_u_1_leq_delta}
    \|u(T, \cdot)-u_1\|_{L^2(0,1)} \le \hat{\delta}.
\end{align}
\end{thm}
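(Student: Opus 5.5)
The plan follows the quasi-static deformation method of Coron--Trélat \cite{Coron2004}, in the form adapted in \cite{Shirshendu20}. Since $u_0,u_1$ lie in the same path connected component of $\mcS$ (with the $C^2$-topology), we fix a continuous path $\tau\mapsto \bar u(\tau,\cdot)\in\mcS$, $\tau\in[0,1]$, with $\bar u(0)=u_0$ and $\bar u(1)=u_1$. After a standard regularization we may take this path to be $C^1$ in $\tau$ with values in $C^2([0,1])$. Each $\bar u(\tau)$ solves \eqref{steady_state} outside $\omega$, so it is an exact stationary solution of \eqref{cntrl_eqn} with the control
\[
\bar f(\tau):=-\big(\bar u_{xx}-\alpha\bar u^\delta\bar u_x+\beta\bar u(1-\bar u^\delta)(\bar u^\delta-\gamma)\big)(\tau).
\]
This control is supported in $\bar\omega$; if needed, we slightly enlarge $\omega$ inside the original one by using a cut-off, which amounts to working with a subinterval. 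We move slowly along the path: for small $\varepsilon>0$ we set $T=1/\varepsilon$ and look for $u(t)=\bar u(\varepsilon t)+v(t)$, with control $f(t)=\bar f(\varepsilon t)+g(t)$, where $g$ is a feedback still to be chosen. The remainder $v$ then satisfies
\[
v_t=A_\tau v-\varepsilon\,\partial_\tau\bar u+N_\tau(v)+\chi_\omega g,
\]
where
\[
A_\tau v=v_{xx}-\alpha\bar u^\delta v_x-\alpha\delta\bar u^{\delta-1}\bar u_x v+\beta\,\partial_u[u(1-u^\delta)(u^\delta-\gamma)]_{u=\bar u}\,v,
\]
and $N_\tau$ is at least quadratic in $v$ (and in $v,v_x$ for the Burgers part).

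The second step is the spectral analysis of $A_\tau$. The convection term is removed by conjugating with the weight $\rho_\tau(x)=\exp\!\big(-\tfrac{\alpha}{2}\int_0^x\bar u^\delta(\tau,s)\,ds\big)$. With this weight, $A_\tau$ becomes self-adjoint in $L^2_{\rho_\tau^{2}}$ (more precisely, $e^{-\frac\alpha2\int\bar u^\delta}\,v$ satisfies a Sturm--Liouville problem). This gives a sequence of real simple eigenvalues $\lambda_1(\tau)>\lambda_2(\tau)>\cdots\to-\infty$ and an orthonormal eigenbasis $e_j(\tau)$ in the weighted space. Because the weights are uniformly equivalent for $\tau\in[0,1]$, the weighted norms are uniformly equivalent to the $L^2$ and $H^1_0$ norms. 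By compactness of $[0,1]$ and continuity of the eigenvalues, we fix $N$ such that $\lambda_{N+1}(\tau)<-\mu<0$ for all $\tau$. We then use pole shifting on the finite unstable part. By unique continuation for Sturm--Liouville eigenfunctions, the restrictions $e_j(\tau)|_\omega$, $j\le N$, are linearly independent, so the finite-dimensional system obtained by projecting onto $\mathrm{span}\{e_1,\dots,e_N\}$ with controls in $\mathrm{span}\{\chi_\omega e_j\}$ is controllable (Kalman/Fattorini criterion). We choose a feedback $g=K_\tau v$, depending continuously on $\tau$, that places the unstable modes below $-\mu$, and we build a quadratic Lyapunov functional
\[
V_\tau(v)=\langle P_\tau \pi_N v,\pi_N v\rangle+\|(I-\pi_N)v\|^2_{\rho_\tau}.
\]
The constant $\eta>0$ coming from this construction is uniform in $\tau$.

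The main step is the Lyapunov estimate along the slow trajectory. Differentiating $V_{\varepsilon t}(v(t))$ gives
\[
\tfrac{d}{dt}V\le-2\eta V+C\varepsilon V+C\varepsilon\sqrt V+(\text{nonlinear terms}).
\]
The term $C\varepsilon V$ comes from the $\tau$-dependence of $P_\tau$, $\rho_\tau$ and the projections. The term $C\varepsilon\sqrt V$ comes from the forcing $-\varepsilon\partial_\tau\bar u$. The nonlinear terms must be bounded by $C V^{3/2}$, or by $C\,V^{1/2}\|v\|_{H^1}^2$ for the Burgers contribution $\alpha(\bar u+v)^\delta(\bar u+v)_x-\ldots$. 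I expect this to be the main obstacle. The term contains $v_x$, so we must work at the $H^1_0$ level, use the dissipation $-\|v_x\|^2$ produced by the stable part of the functional, and absorb the nonlinearity via Agmon's inequality $\|v\|_\infty^2\le\|v\|\,\|v_x\|$. The cross terms between the finite-dimensional feedback and the infinite-dimensional tail must also be controlled. A bootstrap argument then shows that $V(v(t))\le C\varepsilon^2$ on $[0,1/\varepsilon]$ provided $\varepsilon$ is small, since $v(0)=0$.

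To conclude, at $t=1/\varepsilon$ we have $\|u-u_1\|_{L^2}\le C\varepsilon\le\hat\delta/2$. For $T\ge T_{\hat\delta}:=1/\varepsilon$ we continue on $[1/\varepsilon,T]$ with $\bar u\equiv u_1$, control $\bar f(1)+K_1v$, and no forcing. The same Lyapunov inequality then gives exponential decay of $v$, so the bound \eqref{u_T_minus_u_1_leq_delta} holds for every $T\ge T_{\hat\delta}$. Global well-posedness of the closed-loop system on these intervals follows from the a priori bound on $V$ together with local existence for \eqref{cntrl_eqn}.
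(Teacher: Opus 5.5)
\textbf{Gap in the nonlinear Lyapunov estimate.} Your architecture is the paper's: a $C^1$ path $\bar u(\tau)$ in $\mathcal S$ traversed at speed $\epsilon$, the weight $\exp(-\alpha\int_0^x\bar u^\delta)$ making $\mathcal A(\tau)$ a self-adjoint Sturm--Liouville operator, a finite unstable block whose control matrix is the invertible Gram matrix of $\{\phi_k(\tau)|_\omega\}_{k\le n}$, pole shifting, and a bootstrap giving $V\lesssim\epsilon^2$ on $[0,1/\epsilon]$.

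The problem is the step you yourself call the main obstacle. With your functional $V_\tau(v)=\langle P_\tau\pi_Nv,\pi_Nv\rangle+\|(I-\pi_N)v\|^2$, the nonlinear estimates you need are not true. This $V$ is equivalent to $\|v\|_{L^2}^2$, so smallness of $V$ gives no control of $\|v\|_{L^\infty}$.
\begin{itemize}
\item The bound $CV^{3/2}$ is unavailable even for the quadratic part $\tfrac12F''(\bar u)v^2$ of the reaction remainder paired with the tail $(I-\pi_N)v$. That pairing needs $\|v\|_{L^4}$ or $\|v\|_{L^\infty}$, not $\|v\|_{L^2}$.
\item The fallback $CV^{1/2}\|v\|_{H^1}^2$ from Agmon's inequality only works for small $\delta$. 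Pairing $\bar u^{\delta-j}v^jv_x$ with (a projection of) $v$ gives $\|v\|_\infty^j\|v_x\|\,\|v\|\lesssim\|v\|^{1+j/2}\|v\|_{H^1}^{1+j/2}$. The top reaction term $-\beta v^{2\delta+1}$ paired with $(I-\pi_N)v$ gives up to $\|v\|^{\delta+2}\|v\|_{H^1}^{\delta}$.
\item For $\delta\ge 3$ these $H^1$-exponents exceed $2$. Such terms cannot be absorbed by the dissipation $-\|v_x\|^2$ that your functional produces, however small $V$ is.
\end{itemize}
For the same reason, your closing claim that global well-posedness follows from the bound on $V$ is not justified as stated. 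An $L^2$-level bound does not by itself control the $H^1_0$ norm in which the closed-loop problem is solved.

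\textbf{How the paper closes it.} The missing idea is to place the Lyapunov functional one derivative higher, as the paper does: $V(t)=c\langle P(\epsilon t)X_1,X_1\rangle_2-\tfrac12\langle z,\mathcal A(\epsilon t)z\rangle_{\mathcal H(\epsilon t)}$ with $c$ large. This is equivalent to $\|z\|_{H^1_0}^2$, so $\|z\|_{L^\infty}\lesssim V^{1/2}$. Its time derivative produces the stronger dissipation $-\|\mathcal A(\epsilon t)z\|^2$. Every remainder term, Burgers terms included, is then bounded by $\|\mathcal Az\|$ times powers $V^{a}$ with $a\ge 1$, plus $\epsilon\|\mathcal Az\|$. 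Young's inequality absorbs these into $\theta\|\mathcal Az\|^2$ for every $\delta$. The spill-over term $\langle\chi_\omega\mathcal K_1X_1,\mathcal Az\rangle$ is absorbed by taking $c$ large.

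An $L^2$-level functional could only be rescued by a different, structural argument. One would integrate the divergence-form Burgers terms by parts and absorb all intermediate powers into the dissipative term $-\beta\int\rho v^{2\delta+2}$; Agmon's inequality alone is not enough.

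\textbf{Remaining points.} Once the estimate is repaired, your way of covering every $T\ge T_{\hat\delta}$ is a valid alternative to the paper's. You freeze $\bar u\equiv u_1$ on $[1/\epsilon,T]$ and use exponential decay of the closed loop. The paper instead sets $\epsilon=1/T$ and relies on $C_3$ in $\|z(1/\epsilon)\|_{H^1_0}\le C_3\epsilon$ being independent of $\epsilon$. Your route additionally shows local stabilization at $u_1$. Finally, no cut-off in $\omega$ is needed: $\bar f(\tau)$ vanishes on $(0,b)\cup(c,1)$ by definition of $\mathcal S$, so $\chi_\omega\bar f=\bar f$ a.e.
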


To prove the above Theorem~\ref{Main_thm}, we adapt the quasi-static 
deformation method introduced by Coron--Tr\'elat \cite{Coron2004}, 
\cite{Coron06} for semi-linear heat and wave equations (see 
also~\cite{Schmidt06}).

The proof proceeds as follows. We linearize the system along a continuous 
path of steady states connecting the initial state $u_0$ to the target 
$u_1$, yielding a parameter-dependent linear system. To investigate its 
spectral properties, we consider the eigenvalue problem associated with the 
linearized operator and transform it into an equivalent Sturm--Liouville 
eigenvalue problem. The well-established theory of Sturm--Liouville operators 
then provides a characterization of the eigenvalues and eigenfunctions, from 
which we deduce that the eigenvalue problem admits an infinite sequence of 
real eigenvalues tending to $-\infty$ uniformly with respect to the 
parameter. Consequently, it may have only finitely many positive eigenvalues. 
Using the eigenfunction expansion, we therefore decompose the system into an 
infinite-dimensional stable part and a finite-dimensional unstable part. The 
finite-dimensional unstable part of the non-autonomous linear control system 
is then stabilized via a pole-shifting technique, producing a feedback control 
$f \in L^2((0,T)\times(0,1))$. A suitable Lyapunov functional is subsequently 
employed to establish approximate controllability for the full nonlinear 
system.

 The main novelty of this work is twofold. First, we show that, for any prescribed time $T>0$, targets lying outside a sufficiently large ball cannot be reached approximately in time $T$. Second, if the initial and target states belong to the same connected component of the set of steady states $\mcS$, the quasi-static deformation method yields a control steering the solution arbitrarily close to the target. Whether approximate controllability holds for states belonging to different connected components of $\mcS$, while remaining inside a sufficiently small ball (not exceeding the bound in \eqref{assumption_uT}; see Section~\ref{LOGAC}), remains an open problem.

 This paper is organized as follows. Section~\ref{LOGAC} is devoted to global approximate controllability, where the proofs of Theorems~\ref{thm1} and~\ref{thm2} are provided. In Section~\ref{QSD}, we investigate approximate controllability to steady states using the spectral structure of the parameter-dependent linearized operator. Building upon these results, we finally provide the proof of Theorem~\ref{Main_thm}.
\section{Notations and preliminaries}
	Let us denote the Euclidean inner product and the norm in $\mathbb{R}^{n}$ by $\langle\cdot,\cdot\rangle_2$ and $||\cdot||_2$, respectively. Let $\Gamma$ be a set and let $\Theta:=\{(\epsilon, t):\ 0<\epsilon\leq 1,\ 0\leq t\leq 1/{\epsilon}\}.$ Let $F_1, F_2$ be two real valued functions defined on $\Theta\times \Gamma$. The notation $F_1\lesssim F_2$ means that $F_2\geq 0$ and there exists a constant $C>0$ such that
    $$\forall\ (\epsilon,t)\in \Theta,\ \forall\ \gamma \in\Gamma,\quad F_1(\epsilon,t,\gamma)\leq C F_2(\epsilon,t,\gamma)\ .$$ We say that $F_1\sim F_2$ if both $F_1\lesssim F_2$ and $F_2\lesssim F_1.$ 
    
    We denote the Lebesgue function spaces over $\Omega\equiv (0,1)$ by $L^p(\Omega)~(1\leq p\leq \infty)$ with the norm 
\[
\|u\|_{L^p(\Omega)} := \left( \int_\Omega |u(x)|^p \, dx \right)^{1/p},~ u\in L^p(\Omega),~ (1\leq p<\infty),
\]
\[
\|u\|_{L^\infty(\Omega)}
:= \operatorname*{ess\,sup}_{x\in\Omega} |u(x)|,~ (p=\infty).
\]
For an integer $k \ge 0$ and $1 \le p \le \infty$, we denote the usual Sobolev spaces by $W^{k,p}(\Omega)$ and is defined by
\[
W^{k,p}(\Omega)
:= \left\{
u \in L^p(\Omega) \; : \;
D^{\alpha} u \in L^p(\Omega)
\ \text{for all multi-indices } {\alpha} \text{ with } |{\alpha}|\le k
\right\},
\]
 endowed with the norm
\[
\|u\|_{W^{k,p}(\Omega)}
:= \sum_{|\alpha|\le k} \|D^\alpha u\|_{L^p(\Omega)},~ (1\leq p<\infty),
\]
\[
\|u\|_{W^{k,\infty}(\Omega)}
:= \max_{|\alpha|\le k} \|D^\alpha u\|_{L^\infty(\Omega)},~ (p=\infty).
\]
 We denote
\[
H^k(\Omega): = W^{k,2}(\Omega),
\]
 which is a Hilbert space, and the norm is induced by the standard inner product
\[
\langle u,v\rangle_{H^k(\Omega)}
:= \sum_{|\alpha|\le k} \int_\Omega D^\alpha u(x)\, D^\alpha v(x)\, dx.
\]
Let $(X,\|\cdot\|_X)$ be a Banach space, and let $J \subset [0,\infty)$ be a time interval 
(possibly finite, $J=[0,T]$, or infinite, $J=[0,\infty)$). We define the Bochner spaces
\[
L^p(J; X) := \Big\{ u : J \to X \ \text{strongly measurable} : \int_J \|u(t)\|_X^p \, dt < \infty \Big\}\ \text{for}\ 1\leq p < \infty;
\]
\[
L^{\infty}(J; X)
:=
\Big\{
u:J\to X \;\text{strongly measurable} \;:\;
\operatorname*{ess\,sup}_{t\in J}\|u(t)\|_X <\infty
\Big\},
\]
endowed with the norm
\[
\|u\|_{L^p(J; X)} := \left( \int_J \|u(t)\|_X^p \, dt \right)^{1/p},\ \text{for}\ 1\leq p <\infty;
\]
\[
\|u\|_{L^\infty(J;X)}
:=
\operatorname*{ess\,sup}_{t\in J}\|u(t)\|_X .
\]
Let $C_0^\infty(\Omega)$ be the space of smooth functions with compact support in $\Omega$. We denote by $H_0^1(\Omega)$ the closure of $C_0^\infty(\Omega)$ in $H^1(\Omega)$, and by $H^{-1}(\Omega)$ the dual space of $H_0^1(\Omega)$. The duality pairing between $H^{-1}(\Omega)$ and $H_0^1(\Omega)$ is denoted by $\langle \cdot,\cdot \rangle$. In one dimension, the following continuous embeddings hold:
\[
H_0^1(\Omega)\hookrightarrow C(\overline{\Omega})
\hookrightarrow L^\infty(\Omega)
\hookrightarrow L^p(\Omega),
\qquad \forall\, p\in[1,\infty).
\]

\section{Well-Posedness and Regularity}
In this section, we study the well-posedness and regularity of solutions to the generalized Burgers--Huxley equation. We first introduce the notions of weak and strong solutions and state the corresponding existence, uniqueness, and regularity results. Finally, in the absence of control, that is, when $(f=0)$, we show that nonnegative initial data give rise to nonnegative solutions.

We begin by providing the definitions of weak and strong solutions to the system \ref{cntrl_eqn}
\begin{dfn}[Weak solution]
Let $u_0\in L^2(0,1)$ and $f\in L^2(0,T;L^2(0,1)).$
A function
\[
u\in L^\infty(0,T;L^2(0,1))
\cap L^2(0,T;H_0^1(0,1))
\cap L^{2(\delta+1)}(0,T;L^{2(\delta+1)}(0,1))
\]
with
\[
u_t\in L^{1+\frac{1}{2\delta-1}}(0,T;H^{-1}(0,1))
\]
is called a weak solution of \eqref{cntrl_eqn} provided
\[
u(0)=u_0 \qquad \text{in } L^2(0,1),
\]
and satisfies
\begin{align}
\begin{cases}
\langle u_t(t),\varphi\rangle
+ \int_0^1 u_x(t,x)\varphi_x(x)\,dx
+ \alpha\int_0^1 u^\delta(t,x)u_x(t,x)\varphi(x)\,dx
\\
\qquad \qquad = \beta\int_0^1
u(t,x)\bigl(1-u^\delta(t,x)\bigr)
\bigl(u^\delta(t,x)-\gamma\bigr)\varphi(x)\,dx
\\
 \qquad \qquad \qquad \, + \int_0^1 \chi_\omega(x)f(t,x)\varphi(x)\,dx, 
\end{cases}
\end{align}
for all $\varphi\in H_0^1(0,1)$ and for a.e. $t\in(0,T)$.
\end{dfn}

\begin{remark}
The initial condition $u(0)=u_0\in L^2(0,1)$ in the definition of weak solution is understood with respect to the
weakly continuous representative of $u$; that is,
$$
\lim_{t\to0^+}
\int_0^1 u(t,x)\varphi(x)\,dx
=
\int_0^1 u_0(x)\varphi(x)\,dx
\qquad
\text{for every }\varphi\in H_0^1(0,1).
$$
\end{remark}

\begin{dfn}[Strong solution] Let $u_0\in H_0^1(0,1),\
f\in L^2(0,T;L^2(0,1)).$
A function
$$u\in C([0,T];H_0^1(0,1))
\cap L^2(0,T;H^2(0,1))\ \text{with} \
u_t\in L^2(0,T;L^2(0,1))$$
is called a strong solution of \eqref{cntrl_eqn} provided $u(0)=u_0$ in $H_0^1(0,1)$,
and
$$u_t
=
u_{xx}
-\alpha u^\delta u_x
+\beta u(1-u^\delta)(u^\delta-\gamma)
+\chi_\omega f$$
holds in $L^2(0,1)$ for a.e. $t\in(0,T)$.
\end{dfn}

We now state the existence-uniqueness and regularity results for weak and strong solutions of the system \eqref{cntrl_eqn}. These results follow from~\cite{Mohan2021}.
\begin{thm}
     Let $  u_0 \in L^2(0,1)  $ and $  f \in L^2(0,T; L^2(0,1))  $. Then the system \eqref{cntrl_eqn} admits at least one weak solution. 
Moreover, if $\beta  > (2^\delta\alpha)^2$, then the weak solution is unique. In particular, for $1\le \delta \le 2$, uniqueness holds for all positive constants $\alpha$ and $\beta$.
\end{thm}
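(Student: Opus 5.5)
The plan is a Faedo--Galerkin scheme combined with compactness, followed by a monotonicity-type energy estimate for the difference of two solutions to obtain uniqueness.

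\emph{Existence.} Let $\{w_k\}$ be the Dirichlet eigenfunctions of $-\partial_{xx}$ on $(0,1)$. We seek $u_n(t)=\sum_{k\le n} g_k^n(t)w_k$ solving the projected equation, which is a locally solvable ODE system. Testing with $u_n$ gives:
\begin{itemize}
\item the convection term vanishes, since $\int_0^1 u^{\delta+1}u_x\,dx=0$ by the Dirichlet conditions;
\item the reaction term equals $\beta\int_0^1\bigl[(1+\gamma)u^{\delta+2}-\gamma u^2-u^{2\delta+2}\bigr]dx$.
\end{itemize}
Young's inequality absorbs $(1+\gamma)u^{\delta+2}$ into $\frac12 u^{2\delta+2}+Cu^2$. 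Hence
\[
\frac12\frac{d}{dt}\|u_n\|_{L^2}^2+\|u_{n,x}\|_{L^2}^2+\frac{\beta}{2}\|u_n\|_{L^{2\delta+2}}^{2\delta+2}\le C\|u_n\|_{L^2}^2+\|f\|_{L^2}^2 .
\]
Gronwall's inequality then yields uniform bounds in $L^\infty(L^2)\cap L^2(H^1_0)\cap L^{2\delta+2}(L^{2\delta+2})$.

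For the time derivative, the reaction term is bounded in $L^{(2\delta+2)/(2\delta+1)}$ in space-time. For the convection term we write $u^\delta u_x=\frac{1}{\delta+1}(u^{\delta+1})_x$. Its $H^{-1}$ norm is controlled by $\|u^{\delta+1}\|_{L^2}=\|u\|_{L^{2\delta+2}}^{\delta+1}$, and interpolating with the $L^\infty(L^2)\cap L^2(H^1)$ bounds gives $u_{n,t}$ bounded in $L^{1+\frac{1}{2\delta-1}}(0,T;H^{-1})$.

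Aubin--Lions then gives strong convergence in $L^2(L^2)$ and a.e. convergence. Combined with the uniform $L^{2\delta+2}$ bounds, Vitali's theorem lets us pass to the limit in the nonlinear terms tested against fixed functions. The weak continuity and the initial condition follow in the standard way.

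\emph{Uniqueness.} Let $w=u-v$ for two weak solutions. Test the difference equation with $w$; this is justified since $w\in L^2(H^1_0)\cap L^{2\delta+2}$, by a Lions--Magenes type argument in the $L^p$-duality setting. We treat the three contributions as follows.

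\begin{itemize}
\item \emph{Reaction term.} Expand the difference and use the monotonicity $(u^{2\delta+1}-v^{2\delta+1})(u-v)\ge \frac{1}{2^{2\delta}}(u^\delta-v^\delta)^2(u-v)^2$-type bounds, in the form used by Mohan. This yields a good term $-\frac{\beta}{2}\int (u^\delta+v^\delta)^2 w^2$ modulo $C\|w\|^2$; more precisely $\int(u^{2\delta}+v^{2\delta})w^2$ with a constant. The middle term $(1+\gamma)(u^{\delta+1}-v^{\delta+1})w$ is absorbed by the same good term plus $C\|w\|^2$.
\item \emph{Convection term.} It becomes $\frac{\alpha}{\delta+1}\int (u^{\delta+1}-v^{\delta+1})w_x$. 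Since $|u^{\delta+1}-v^{\delta+1}|\le (\delta+1)2^{\delta-1}(|u|^\delta+|v|^\delta)|w|$, Young's inequality bounds it by $\frac12\|w_x\|^2+\frac{\alpha^2 2^{2\delta-2}}{2}\int(|u|^\delta+|v|^\delta)^2w^2$.
\end{itemize}

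The \textbf{main obstacle} is matching these constants. The convection contribution must be dominated by the good reaction term, and this is exactly where the condition $\beta>(2^\delta\alpha)^2$ enters. After this absorption, Gronwall's inequality gives $w\equiv0$.

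For $1\le\delta\le2$ one avoids the condition by a different route. Use Gagliardo--Nirenberg/Agmon, $\|w\|_{L^\infty}^2\le\|w\|\|w_x\|$, and bound $\int|u|^\delta|w||w_x|\le\|u\|_{L^{2\delta}}^\delta\|w\|_{L^\infty}\|w_x\|$. The resulting Gronwall coefficient $\|u\|_{L^{2\delta}}^{4\delta/3}$ is integrable in time when $\delta\le2$, by the energy bounds and interpolation ($u\in L^{2\delta+2}$ in space-time, or $L^\infty L^2\cap L^2H^1$). This gives uniqueness for all positive $\alpha$ and $\beta$.
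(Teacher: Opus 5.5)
The paper gives no proof of this theorem; it simply defers to Mohan (2021). Your outline is the standard one and is sound in structure. Galerkin in the sine basis, the energy estimate, Aubin--Lions, and identification of the nonlinear limits by a.e.\ convergence plus the uniform $L^{2\delta+2}(Q)$ bound give existence for all $\alpha,\beta>0$. In one dimension compactness suffices, so no parameter condition is needed for existence. An $L^2$ estimate for $w=u-v$ then gives uniqueness.

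Testing with $w$ is legitimate because the equation itself puts $w_t$ in $L^2(0,T;H^{-1})+L^{\frac{2\delta+2}{2\delta+1}}(Q)$, which is the dual of $L^2(0,T;H^1_0)\cap L^{2\delta+2}(Q)$. Here the convection term, written in divergence form, lies in $L^2(0,T;H^{-1})$ because $u^{\delta+1}\in L^2(Q)$. For $\delta\ge2$, the class $L^{2\delta/(2\delta-1)}(0,T;H^{-1})$ from the definition alone is not in duality with $L^2(0,T;H^1_0)$. Incidentally, that exponent comes from the reaction term, via $L^1\hookrightarrow H^{-1}$ and interpolation between $L^\infty(L^2)$ and $L^{2\delta+2}(Q)$, not from the convection term.

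Two steps need correcting.

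\textbf{First, the monotonicity inequalities you quote are not the right ones.} A good term $\tfrac{\beta}{2}\int(u^\delta+v^\delta)^2w^2$ is false for even $\delta$ (test $u=1$, $v=-1$), even modulo $C\|w\|^2$. For odd $\delta$ it is useless, since it vanishes at $u=-v$. A lower bound of type $2^{-2\delta}|w|^{2\delta+2}$ cannot dominate $\int(|u|^\delta+|v|^\delta)^2w^2$. Use instead
\[
(u^{2\delta+1}-v^{2\delta+1})(u-v)=\tfrac12(u^{2\delta}-v^{2\delta})(u^2-v^2)+\tfrac12(u^{2\delta}+v^{2\delta})w^2\ge\tfrac12(u^{2\delta}+v^{2\delta})w^2 .
\]
Your convection bound is at most $\tfrac12\|w_x\|^2+2^{2\delta-2}\alpha^2\int(u^{2\delta}+v^{2\delta})w^2$. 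Absorbing it into half of the good term needs $2^{2\delta-2}\alpha^2\le\beta/4$, i.e.\ $\beta\ge(2^\delta\alpha)^2$. The other half absorbs the $(1+\gamma)$-term up to $C\|w\|^2$. This bookkeeping is the entire content of the hypothesis and should be written out, not asserted.

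\textbf{Second, for $1\le\delta\le2$ your Gronwall coefficient is wrong.} From
\[
\|u\|_{L^{2\delta}}^\delta\|w\|_{L^\infty}\|w_x\|\le\|u\|_{L^{2\delta}}^\delta\|w\|^{1/2}\|w_x\|^{3/2},
\]
Young with exponents $4/3$ and $4$ yields $\tfrac14\|w_x\|^2+C\|u\|_{L^{2\delta}}^{4\delta}\|w\|^2$. The power is $4\delta$, not $4\delta/3$. With your power, $\|u\|_{L^{2\delta}}\le\|u\|_{L^{2\delta+2}}$ would give uniqueness for every $\delta$, which should have been a warning sign.

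With the correct power, the space-time $L^{2\delta+2}$ bound only covers $\delta=1$. For $\delta=2$ you must use Gagliardo--Nirenberg,
\[
\|u\|_{L^{2\delta}}^{4\delta}\le C\|u\|_{L^2}^{2\delta+2}\|u\|_{H^1}^{2\delta-2},
\]
which lies in $L^1(0,T)$ under the $L^\infty(L^2)\cap L^2(H^1_0)$ bound exactly when $\delta\le2$. That is where the restriction on $\delta$ actually comes from.
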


\begin{thm}
     Let $  u_0 \in H^1_0(0,1)  $ and $  f \in L^2(0,T; L^2(0,1)) $. Then the system \eqref{cntrl_eqn} admits a unique strong solution.
\end{thm}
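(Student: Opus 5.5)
We prove existence and uniqueness of the strong solution by Galerkin approximation combined with a priori estimates at the $H^1_0$ level. This follows the scheme of \cite{Mohan2021}; the weak theory of the previous theorem is used only as a fallback.

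\textbf{Galerkin scheme and $L^2$ estimate.} We take the Dirichlet eigenfunctions $w_k=\sqrt2\sin(k\pi x)$ and seek $u_n(t)=\sum_{k\le n}c_k(t)w_k$ solving the projected equation with $P_nu_0\to u_0$ in $H^1_0(0,1)$. Local solvability is clear because the nonlinearities are polynomial. Testing with $u_n$ kills the convection term, since $\int_0^1 u^{\delta}u_xu\,dx=\int_0^1(u^{\delta+2}/(\delta+2))_x\,dx=0$. For the reaction term,
\[
\beta\int_0^1 u^2(1-u^\delta)(u^\delta-\gamma)\,dx=-\beta\int_0^1 u^{2\delta+2}\,dx+\beta(1+\gamma)\int_0^1 u^{\delta+2}\,dx-\beta\gamma\int_0^1 u^2\,dx .
\]
By Young's inequality the middle term is absorbed by half of the leading one plus $C\|u\|_{L^2}^2$. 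Gronwall then gives bounds on $u_n$ in $L^\infty(0,T;L^2)\cap L^2(0,T;H^1_0)\cap L^{2\delta+2}(0,T;L^{2\delta+2})$, uniform in $n$.

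\textbf{$H^1_0$ estimate (main obstacle).} Next we test with $-u_{n,xx}$, which is legitimate since $-\partial_{xx}$ preserves the Galerkin space. This gives
\[
\frac12\frac{d}{dt}\|u_x\|^2+\|u_{xx}\|^2=\alpha\int_0^1 u^\delta u_xu_{xx}\,dx-\beta\int_0^1 u(1-u^\delta)(u^\delta-\gamma)u_{xx}\,dx-\int_0^1\chi_\omega fu_{xx}\,dx .
\]

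The reaction term is handled by integrating by parts. The top-order part is
\[
\beta\int_0^1(u^{2\delta+1})u_{xx}\,dx=-\beta(2\delta+1)\int_0^1 u^{2\delta}u_x^2\,dx\le 0,
\]
which is a good sign. The lower-order parts are of the form $c\int_0^1 u^{k}u_x^2\,dx$ with $k\le\delta$, and they are bounded by $\frac{\beta(2\delta+1)}{2}\int_0^1 u^{2\delta}u_x^2\,dx+C\|u_x\|^2$.

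The convection term is the delicate one. We bound it by
\[
\frac14\|u_{xx}\|^2+\alpha^2\int_0^1 u^{2\delta}u_x^2\,dx .
\]
This is absorbed by the dissipative term $\beta(2\delta+1)\int_0^1 u^{2\delta}u_x^2\,dx$ only when $\alpha^2$ is small relative to $\beta$. To avoid such a restriction, we instead use Agmon's inequality, $\|u\|_{L^\infty}^2\le\|u\|\|u_x\|$. This gives
\[
\int_0^1 u^{2\delta}u_x^2\,dx\le \|u\|^{\delta}\|u_x\|^{\delta+2}.
\]
The resulting inequality for $y=\|u_x\|^2$ reads $y'\le C(1+\|u\|^{\delta}\|u_x\|^{\delta})\,y+\|f\|^2$. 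We also need $\int_0^T\|u\|^{\delta}\|u_x\|^{\delta}\,dt<\infty$. This integrability is the crux. We will derive it from the $L^{2\delta+2}$ bound together with the $L^2H^1$ bound, using the $\int_0^T\int_0^1 u^{2\delta}u_x^2\,dx\,dt$ bound that comes from testing the $L^2$ estimate with higher powers $u^{2\delta+1}$; the $L^p$ estimates are obtained in the same way as the $L^2$ one, since the convection term again integrates to zero. Gronwall then yields $u_n$ bounded in $L^\infty(0,T;H^1_0)\cap L^2(0,T;H^2)$. From the equation, $u_{n,t}$ is bounded in $L^2(0,T;L^2)$, because $u^\delta u_x$ and $u^{2\delta+1}$ lie in $L^2L^2$ by the $L^\infty$ bound $\|u\|_{L^\infty}\lesssim\|u\|_{H^1_0}$.

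\textbf{Passage to the limit, continuity and uniqueness.} By Aubin--Lions, a subsequence converges strongly in $L^2(0,T;H^1_0)$ and in $C([0,T];L^2)$. Together with the uniform $L^\infty$ bound, this lets us pass to the limit in the nonlinear terms. The Lions--Magenes lemma ($u\in L^2H^2$, $u_t\in L^2L^2$) gives $u\in C([0,T];H^1_0)$.

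For uniqueness, let $w=u-v$ be the difference of two strong solutions, both bounded in $L^\infty((0,T)\times(0,1))$. Then
\[
\frac12\frac{d}{dt}\|w\|^2+\|w_x\|^2\le C(\|u\|_{L^\infty},\|v\|_{L^\infty})\,(\|w\|\,\|w_x\|+\|w\|^2),
\]
where the convection difference $u^\delta u_x-v^\delta v_x$ is written as $(u^{\delta+1}-v^{\delta+1})_x/(\delta+1)$ and integrated by parts onto $w$. Young's inequality and Gronwall then give $w\equiv0$. Unlike the weak case, no restriction on $\beta$ is needed here, thanks to the $L^\infty$ control.
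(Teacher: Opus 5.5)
Your $L^2$ estimate, your treatment of the reaction term after testing with $-u_{n,xx}$, the compactness and limit passage, and the uniqueness argument are sound. The paper gives no proof of this statement and only cites \cite{Mohan2021}; your uniqueness step matches the one in the appendix proof of Theorem~\ref{Thm_Soln_Trans_sys}.

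The gap is at the step you flag as the crux, and it is real for $\delta\ge 3$.

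\textbf{The weighted bound is not available at the Galerkin level.} You cannot obtain $\int_0^T\!\int_0^1 u^{2\delta}u_x^2\,dx\,dt\le C$ by testing the Galerkin system with $u_n^{2\delta+1}$. That function does not lie in $\mathrm{span}\{w_1,\dots,w_n\}$, so the projection does not drop out. The convection term becomes $\int_0^1 u_n^{\delta}u_{n,x}\,P_n(u_n^{2\delta+1})\,dx$, which does not vanish. The natural bound for it, $\alpha\|u_n^\delta u_{n,x}\|\,\|P_n(u_n^{2\delta+1})\|$, only closes under a smallness condition of the type $\alpha^2\lesssim(2\delta+1)\beta$, which is exactly what you wanted to avoid. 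Testing with $-u_{n,xx}$ is legitimate only because the sine space is invariant under $\partial_{xx}$; it is not invariant under taking powers.

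\textbf{Even granted, that bound does not give your Gronwall coefficient.} It does not yield $\int_0^T\|u\|^\delta\|u_x\|^\delta\,dt<\infty$ when $\delta>2$. Take a burst $u=\sin(k\pi x)$ on a time interval of length $k^{-2}$. It keeps $\int\|u_x\|^2\,dt$, $\int\!\int u^{2\delta}u_x^2\,dx\,dt$ and the $L^{2\delta+2}$ norms of order one. Yet $\int\|u\|^\delta\|u_x\|^\delta\,dt$ is of order $k^{\delta-2}$.

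For $\delta\le 2$ your Agmon route does close with the basic energy bounds alone, since $\int_0^T\|u_x\|^\delta\,dt\le T^{1-\delta/2}\|u_x\|_{L^2(0,T;L^2)}^{\delta}$. For $\delta\ge 3$ the $H_0^1$ bound is not established.

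\textbf{Repair.} Derive the weighted bound for genuine solutions, then use it additively, with no Agmon step. Testing by $-u_{xx}$ gives
\[
\frac{d}{dt}\|u_x\|^2+\|u_{xx}\|^2\le C\|u_x\|^2+C\int_0^1u^{2\delta}u_x^2\,dx+C\|f\|^2,
\]
and Gronwall closes as soon as $\int_0^T\!\int_0^1u^{2\delta}u_x^2<\infty$.

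This is how the paper handles the analogous closed-loop problem in the appendix. It applies Leray--Schauder to the map $\Phi$ and carries out the a priori estimates of Lemma~\ref{lem:uniform_bound_fixed_points} on fixed points. These are actual solutions with $z\in L^\infty(Q)\cap L^2(0,T;H_0^1(0,1))$, so $|z|^{2\delta}z$ is an admissible test function (Step 1). The resulting weighted gradient bound then enters Step 2 additively.

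Alternatively, construct a local strong solution by a contraction in $C([0,T_0];H_0^1)\cap L^2(0,T_0;H^2)$. Prove the $L^{2\delta+2}$ and $H_0^1$ bounds on it, and extend globally by continuation.
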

In the absence of control (i.e., when $f \equiv 0$), the system \eqref{cntrl_eqn} reduces to
\begin{equation}\label{without_cntrol_eqn}
    \begin{cases}
            u_t=u_{xx} -\alpha u^\delta u_x+\beta u(1-u^\delta)(u^\delta -\gamma) ,\;\;\;& {\text{in}} \;\; (0,T)\times(0,1),\\
         u(t,0)=u(t,1)=0,\;\;\;\;& t\in(0,T),\\
          u(0,x)=u_0(x)\;\; &x\in (0,1).\\
    \end{cases}
\end{equation}
The following proposition addresses the non-negativity of the solution:
\begin{prop} Let $u_0 \in L^2(0,1)$ with $u_0 \geq 0$. Then the solution $u$ of \eqref{without_cntrol_eqn} with initial data $u_0$ satisfies $u \geq 0$ in $(0,T) \times (0,1)$. \end{prop}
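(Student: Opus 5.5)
The plan is a Stampacchia truncation argument: test the equation with the negative part $u^- := \max(-u,0)$ and show that $\|u^-(t)\|_{L^2(0,1)}^2$ satisfies a Gronwall-type inequality with zero initial value. Set $v = -u^- = \min(u,0)$, so $u^-=-v$. I would first work with strong solutions, i.e.\ $u_0\in H_0^1(0,1)$, where $u_t\in L^2(0,T;L^2(0,1))$ and $u\in L^2(0,T;H^2(0,1))$ make every manipulation rigorous. For $u\in H_0^1$ we have $u^-\in H_0^1$ with $(u^-)_x = -u_x\chi_{\{u<0\}}$, and the chain rule $\frac{d}{dt}\tfrac12\|u^-\|^2 = -\langle u_t,u^-\rangle$ holds (Lions--Magenes type lemma). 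Testing the equation with $\varphi=u^-$ then gives
\[
\frac12\frac{d}{dt}\|u^-\|_{L^2}^2 + \|(u^-)_x\|_{L^2}^2 = \alpha\int_0^1 u^\delta u_x u^-\,dx - \beta\int_0^1 u(1-u^\delta)(u^\delta-\gamma)u^-\,dx .
\]

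The two right-hand terms are handled separately. For the convection term, only the set $\{u<0\}$ contributes, where $u=v$ and $u^-=-v$, so the integrand is $-v^{\delta+1}v_x = -\frac{1}{\delta+2}\partial_x(v^{\delta+2})$. Its integral vanishes because $v\in H_0^1$ is zero at both endpoints. This exact cancellation removes the Burgers term. For the reaction term, on $\{u<0\}$ we have $-u\,u^- = (u^-)^2$, so the term equals
\[
\beta\int_{\{u<0\}} (u^-)^2 (1-u^\delta)(u^\delta-\gamma)\,dx .
\]
Write $g(s)=(1-s^\delta)(s^\delta-\gamma)$ for $s<0$.
\begin{itemize}
\item If $\delta$ is even, $s^\delta>0$. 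The function $g$ is then bounded above by its maximum over $s^\delta\in\mathbb R$, namely $(1-\gamma)^2/4$, or more crudely by a constant $C(\gamma)$.
\item If $\delta$ is odd, $s^\delta<0$. Then $1-s^\delta>1$ and $s^\delta-\gamma<-\gamma<0$, so $g<0$ and the term is nonpositive.
\end{itemize}
In both cases the reaction term is at most $\beta C\|u^-\|^2$. Combining the two estimates gives $\frac{d}{dt}\|u^-\|^2\le 2\beta C\|u^-\|^2$. Since $u^-(0)=u_0^-=0$, Gronwall's inequality yields $u^-\equiv 0$, that is, $u\ge 0$.

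The step I expect to be the main obstacle is the passage to $L^2$ initial data, together with justifying the test-function computation for weak solutions, where $u_t$ lies only in $L^{1+1/(2\delta-1)}(0,T;H^{-1})$. I would approximate $u_0\ge0$ in $L^2$ by nonnegative $u_{0,n}\in H_0^1$, for example mollified truncations. The corresponding strong solutions $u_n$ are then nonnegative by the argument above. Next I would pass to the limit using the uniform energy bounds in $L^\infty(L^2)\cap L^2(H_0^1)\cap L^{2(\delta+1)}(L^{2(\delta+1)})$ and the compactness (Aubin--Lions) used in the existence proof of~\cite{Mohan2021}. This gives $u_n\to u$ a.e., so nonnegativity passes to the limit. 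Under the uniqueness hypothesis the limit is \emph{the} weak solution; otherwise the statement holds for the solution constructed by this procedure.
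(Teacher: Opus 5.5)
Your argument is correct and is essentially the paper's proof. Both test with the negative part, note that the Burgers term is the exact derivative $-\frac{1}{\delta+2}\partial_x\big((-u^-)^{\delta+2}\big)$ and so integrates to zero, bound the reaction term by a constant times $\|u^-\|_{L^2(0,1)}^2$, and conclude by Gr\"onwall from $u_0^-=0$. There are two minor differences. First, the paper expands the reaction term and applies Young's inequality to get the constant $\beta(\gamma^2+1)/2$; your pointwise bound $(1-w)(w-\gamma)\le(1-\gamma)^2/4$ holds for every real $w=s^\delta$, so your parity split is unnecessary. Second, the paper carries out the computation formally on the solution with $L^2$ data, without your strong-solution and approximation justification.
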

\begin{proof}
    Multiplying equation \eqref{without_cntrol_eqn} by $-u^-:= -\max\{-u,0\}$ and performing integration by parts, we obtain foe each  $0\leq t \leq T$
    \begin{equation}\label{first_21}
        \begin{split}
            \frac{1}{2}\frac{d}{dt}\|u^-(t,\cdot)\|_{L^2(0,1)}^2+\|u_x^-(t,\cdot)\|_{L^2(0,1)}^2&=\int_0^1\alpha(-u^-(t,x))^{\delta+1}(u^-(t,x))_x\;dx \\&- \int_0^1 \beta (u^-(t,x))^2(1-u^\delta(t,x))(\gamma-u^\delta(t,x))\;dx\\
            &:=I_1+I_2
        \end{split}
    \end{equation}
    First, we show that $I_1=0$. For that, we have 
    \begin{equation}\label{I_1}
        \begin{split}
            I_1&= \int_0^1\alpha(-u^-(t,x))^{\delta+1}(u^-(t,x))_x\;dx\\
            &= -\frac{\alpha}{\delta+2}\int_0^1 (-u^-(t,x))^{\delta+2}_x\;dx.
        \end{split}
    \end{equation}
    Performing integration by parts with the help of $u(t,0)=u(t,1)=0$ we obtain $I_1=0$. Now, we find an estimate for $I_2$. Using Cauchy–Schwarz and Young's inequality, we obtain
    \begin{equation}\label{estimate_I_2}
        \begin{split}
            I_2&= -\int_0^1 \beta (u^-(t,x))^2(1-u^\delta(t,x))(\gamma-u^\delta(t,x))\;dx\\
            &=- \int_0^1 \beta (u^-(t,x))^2(1-(-u^-(t,x))^\delta)(\gamma-(-u^-(t,x))^\delta)\;dx\\
            &= -\int_0^1 \beta (u^-(t,x))^2 \left(\gamma-(\gamma+1) (-u^-(t,x))^\delta)+ (-u^-(t,x))^{2\delta})\right)\;dx\\
            &= -\beta\gamma\int_0^1|u^-(t,x)|^2\;dx +\beta (\gamma+1) \int_0^1 (-u^-(t,x))^{\delta+2}\;dx-\beta\int_0^1 |u^-(t,x)|^{2\delta+2}\;dx\\
            &\leq -\beta\gamma\int_0^1|u^-(t,x)|^2\;dx  \frac{\beta (\gamma+1)^2}{2} \int_0^1 |u^-(t,x)|^{2}\;dx+\frac{\beta}{2}\int_0^1 |u^-(t,x)|^{2\delta+2}\;dx\\&\hspace{9cm}-\beta\int_0^1 |u^-(t,x)|^{2\delta+2}\;dx\\
            &\leq \frac{\beta(\gamma^2+1)}{2} \int_0^1|u^-(t,x)|^2\;dx\\
            &=\frac{\beta(\gamma^2+1)}{2}\|u^-(t,\cdot)\|^2_{L^2(0,1)},
           \end{split}
    \end{equation}
for all $0\leq t\leq T$. Substituting this estimate \eqref{estimate_I_2} in \eqref{first_21}, we have 
\begin{equation*}
    \frac{1}{2}\frac{d}{dt}\|u^-(t,\cdot)\|_{L^2(0,1)}^2\leq \frac{\beta(\gamma^2+1)}{2}\|u^-(t,\cdot)\|^2,
\end{equation*}
for all $0\leq t\leq T$. Finally, using Gr\"onwall's inequality, we get 
\begin{equation}\label{last_21}
    \|u^-(t,\cdot)\|_{L^2(0,1)}^2 \leq e^{\beta(\gamma^2+1)t }\|u^-_0(\cdot)\|_{L^2(0,1)}^2,
\end{equation}
for all $0\leq t\leq T$. Since $u_0 \geq 0$, it follows that $u^- = 0$. Therefore, from the estimate in \eqref{last_21}, we conclude that $u^-(t,x) = 0$ for almost every $(t,x) \in (0,T) \times (0,1)$. Hence, $u \geq 0$ in $(0,T) \times (0,1)$.
\end{proof}
   
\section{On the Lack of Global Approximate Controllability}\label{LOGAC}
This section is devoted to the proofs of Theorems~\ref{thm1} and~\ref{thm2}. We begin by defining the function $g:[0,b]\subset [0,1]\rightarrow \R$ by $$g(x):= x^N(b-x)^N,\;\;\;\;\;\;\;\;\;\;\;\; x\in [0,b]\;\; with \;\;N>4.$$
Now we deduce the following estimate with the fact that $u,v\geq 0.$ 
\begin{lem}
    Let $T>0$. Let $u$ be the solution of the equation \eqref{cntrl_eqn}. Then we have for $t\in(0,T),$
    \begin{equation}\label{estimt1}
        \frac{1}{2}\frac{d}{dt}\int_0^b g(x)u^2(t,x)\;dx\leq L(b,N),
    \end{equation}
where $L(b,N)$ is given by 
{\small\begin{equation*}
    \begin{split}
       L(b,N):=\int_0^b &\Bigg[ \frac{\delta}{(1+\delta)\left(2^\frac{\delta-1}{\delta}\beta^{\frac{1}{\delta}}(1+\delta)^{\frac{1}{\delta}}\right)}\left(\frac{1}{2}\big(x(b-x)\big)^{\frac{\delta(N-2)-2}{\delta}}J^{\frac{1+\delta}{\delta}}\right)\\&\hspace{2cm}+ \frac{\left(N\alpha/\delta+2\right)^\frac{2\delta+2}{\delta}}{\left(\frac{\beta}{4}\right)^{\left(\frac{\delta+2}{\delta}\right)}\left(\frac{2\delta+2}{\delta+2}\right)^{\left(\frac{\delta+2}{\delta}\right)}}\cdot \frac{\delta}{2\delta+2}(b-2x)^{\frac{2\delta+2}{\delta}}\left(x(b-x)\right)^{\frac{N\delta-2\delta-2}{\delta}} \\&\hspace{3.5cm}+ \frac{\left(\beta(1+\gamma)\right)^\frac{2\delta+2}{\delta}}{\left(\frac{\beta}{2}\right)^{\left(\frac{\delta+2}{\delta}\right)}\left(\frac{2\delta+2}{\delta+2}\right)^{\left(\frac{\delta+2}{\delta}\right)}}\cdot \frac{\delta}{2\delta+2}\left(x(b-x)\right)^{{N}}\Bigg]\; dx.
    \end{split}
\end{equation*}} 
\end{lem}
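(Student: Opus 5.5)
Multiply the equation in \eqref{cntrl_eqn} by $g(x)u(t,x)$ and integrate over $(0,b)$. Since $g$ is supported in $[0,b]$ and $\omega=(b,c)$ is disjoint from $(0,b)$, the control term $\chi_\omega f$ drops out. This is why the estimate holds uniformly in $f$. Because $g$ and $g'$ vanish at $x=0$ and $x=b$ (as $N>4$), all boundary terms from integration by parts disappear. We use $u\ge 0$, as indicated before the lemma; alternatively, when $\delta$ is even the relevant powers are nonnegative anyway. This gives
\[
\frac12\frac{d}{dt}\int_0^b g u^2 = -\int_0^b g u_x^2 + \frac12\int_0^b g'' u^2 + \frac{\alpha}{\delta+2}\int_0^b g' u^{\delta+2} + \beta\int_0^b g u^2(1-u^\delta)(u^\delta-\gamma).
\]
Here the diffusion term contributed $-\int g u_x^2 - \int g' u u_x = -\int g u_x^2 + \frac12\int g'' u^2$. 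The convection term was written as $-\frac{\alpha}{\delta+2}\int g (u^{\delta+2})_x$ and integrated by parts. We discard $-\int g u_x^2\le 0$.

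Next, expand the reaction term:
\[
\beta g u^2(1-u^\delta)(u^\delta-\gamma) = -\beta g u^{2\delta+2} + \beta(1+\gamma) g u^{\delta+2} - \beta\gamma g u^2 .
\]
The first piece, $-\beta\int g u^{2\delta+2}$, is the absorbing term. We split it into pieces of size $\frac{\beta}{4}$, $\frac{\beta}{2}$, and so on, one for each of the three positive terms. Each positive term is then bounded by Young's inequality against its own piece.

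(i) The term $\frac12 g'' u^2$. Here $|g''|\lesssim (x(b-x))^{N-2}J$, where $J$ denotes the polynomial factor coming from $g''$. Write $u^2 = (g^{1/(\delta+1)}u^2)\,g^{-1/(\delta+1)}$ and apply Young with exponents $p=\delta+1$, $q=(\delta+1)/\delta$. This bounds it by a small multiple of $\beta g u^{2\delta+2}$ plus a constant times $(x(b-x))^{\frac{\delta(N-2)-2}{\delta}}J^{\frac{1+\delta}{\delta}}$. Integrability requires $\delta(N-2)-2>-\delta$, which holds for $N>4$.

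(ii) The term $\frac{\alpha}{\delta+2}g'u^{\delta+2}$, with $g' = N(b-2x)(x(b-x))^{N-1}$. Apply Young with exponents $\frac{2\delta+2}{\delta+2}$ and $\frac{2\delta+2}{\delta}$ against $\frac{\beta}{4}g u^{2\delta+2}$. This produces the term $(b-2x)^{\frac{2\delta+2}{\delta}}(x(b-x))^{\frac{N\delta-2\delta-2}{\delta}}$, since $(N-1)\frac{2\delta+2}{\delta} - N\frac{\delta+2}{\delta} = \frac{N\delta-2\delta-2}{\delta}$.

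(iii) The term $\beta(1+\gamma) g u^{\delta+2}$. Apply the same Young exponents against $\frac{\beta}{2}g u^{2\delta+2}$. This gives a constant times $g = (x(b-x))^N$.

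The term $-\beta\gamma g u^2\le 0$ is dropped. Summing (i)--(iii), the $u^{2\delta+2}$ contributions cancel against the absorbing term, provided the chosen fractions of $\beta$ add up to at most $\beta$. What remains is exactly the $u$-independent quantity $L(b,N)$.

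The main obstacle is bookkeeping rather than analysis. The Young constants must be chosen to match the precise coefficients displayed in $L(b,N)$. One must check that the three portions of $\beta\int g u^{2\delta+2}$ sum to at most $\beta$. One must also confirm that every negative power of $x(b-x)$ produced by dividing by $g$ stays integrable, which is guaranteed by $N>4$. Finally, the integration-by-parts steps need justification at the level of the regularity of weak solutions, for instance by approximating with strong solutions from the existence theorem above.
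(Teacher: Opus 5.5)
Your proposal is correct and follows the paper's proof essentially line by line. You multiply by $gu$ and integrate over $(0,b)$, where $\chi_\omega$ vanishes, then integrate by parts using $g=g'=0$ at $0$ and $b$; after discarding $-\int_0^b g u_x^2$ and $-\beta\gamma\int_0^b g u^2$, you absorb $\frac12 g''u^2$, $\frac{\alpha}{\delta+2}g'u^{\delta+2}$ and $\beta(1+\gamma)gu^{\delta+2}$ into $-\beta\int_0^b g u^{2\delta+2}$ by Young's inequality with the same split $\frac{\beta}{4},\frac{\beta}{4},\frac{\beta}{2}$. As a refinement, you need no sign assumption on $u$ (which is unavailable for the controlled equation with arbitrary $f$ and $u_0$): apply Young to $|g'|\,|u|^{\delta+2}$ and $g|u|^{\delta+2}$ and use $|u|^{2\delta+2}=u^{2\delta+2}$. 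Likewise, since $J$ and $b-2x$ change sign on $(0,b)$, read them as $|J|$ and $|b-2x|$ in $L(b,N)$ (so also $|g''|=(x(b-x))^{N-2}|J|$, not $\lesssim (x(b-x))^{N-2}J$).
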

\begin{proof}
    We'll use the shorthand $E_g$ for 
\begin{equation}\label{value_Eg}
    E_g(t,x):= \frac{1}{2}\frac{d}{dt}\Big\{g(x)u^2(t,x)\Big\},\;\;\; (t,x)\in (0,T)\times[0,1]. 
\end{equation}
Using equation \eqref{cntrl_eqn} and Young's inequality, we can deduce
\begin{equation}\label{E_g}
    \begin{split}
        E_g&= g\{uu_t\}\\
        &= g\left\{u[u_{xx} -\alpha u^\delta u_x+\beta u(1-u^\delta)(u^\delta -\gamma) +\chi _\omega f ]\right\}\\
        &= g\left\{uu_{xx} -\alpha u^{\delta+1} u_x +\beta u^2[u^\delta-\gamma-u^{2\delta}+\gamma u^\delta]+\chi _\omega f u\right\}\\
        &=g\left\{(uu_x)_x-u_{x}^2-\frac{\alpha}{\delta+2}(u^{\delta+2})_x+\beta u^{\delta+2}-\beta u^{2\delta+2}-\beta\gamma u^2+\beta\gamma u^{\delta+2}\right\}+guf\chi_\omega\\
        &\leq g(uu_x)_x-\frac{\alpha}{\delta+2}g(u^{\delta+2})_x+\beta gu^{\delta+2}-\beta gu^{2\delta+2}+\beta\gamma gu^{\delta+2}+guf\chi_\omega\\
        &= (guu_x)_x -\left(\frac{g^\prime u^2}{2}\right)_x +\frac{g^{\prime\prime}u^2}{2}-\frac{\alpha}{\delta+2}\left(gu^{\delta+2}\right)_x+\frac{\alpha}{\delta+2}g^\prime u^{\delta+2}\\&\hspace{4cm}+\beta gu^{\delta+2}-\beta gu^{2\delta+2}+\beta\gamma gu^{\delta+2}+guf\chi_\omega.
    \end{split}
\end{equation}
When we perform integration over the interval $[0, b]$ and take into account the boundary conditions
$$g^{\prime}(0)=g^{\prime\prime}(0)=0=g^{\prime}(b)=g^{\prime\prime}(b),$$
We can see that
    $$I_g(t):=\frac{1}{2}\frac{d}{dt}\int_0^b g(x)u^2(t,x)\;dx,$$
satisfies, for $t\in (0,T)$
\begin{equation}\label{I_g}
  I_g(t)\leq  \int_0^b \Big ( \frac{g^{\prime\prime}u^2}{2}+\frac{\alpha}{\delta+2}g^\prime u^{\delta+2}+\beta gu^{\delta+2}-\beta gu^{2\delta+2}+\beta\gamma gu^{\delta+2}\Big)\;dx
\end{equation}

Let us denote, for $(t,x)\in(0,T)\times[0,1], $
\begin{align*}
L_1=\frac{g^{\prime\prime}u^2}{2}=\frac{1}{2}\big(x(b-x)\big)^{(N-2)}\Big\{N(N-1)(b-x)^2-2N^2x(b-x)+N(N-1)x^2\Big\}u^2,
\end{align*}
%$$L_1=\frac{g^{\prime\prime}u^2}{2}=\frac{1}{2}\big(x(b-x)\big)^{(N-2)}\Big\{N(N-1)(b-x)^2-2N^2x(b-x)+N(N-1)x^2\Big\}u^2,$$
\begin{align*}
    L_2=\frac{\alpha}{\delta+2}g^\prime u^{\delta+2}=\frac{\alpha N}{\delta+2}\left(x(b-x)\right)^{N-1}(b-2x)u^{\delta+2}
\end{align*}
%$$L_2=\frac{\alpha}{\delta+2}g^\prime u^{\delta+2}=\frac{\alpha N}{\delta+2}\left(x(b-x)\right)^{N-1}(b-2x)u^{\delta+2}$$
and 
\begin{align*}
L_3=\beta(1+\gamma)gu^{\delta+2}=\beta(1+\gamma)\left(x(b-x)\right)^Nu^{\delta+2}.
\end{align*}
%$$L_3=\beta(1+\gamma)gu^{\delta+2}=\beta(1+\gamma)\left(x(b-x)\right)^Nu^{\delta+2}.$$
%\begin{comment}
    
%{\color{purple}\begin{equation*}
%\begin{aligned}
%\frac{g^{\prime\prime}u^2}{2}&=\frac{1}{2}\frac{d}{dx}\Bigg(\frac{d}{dx}(x^N(b-x)^N)\Bigg)u^2\\
    % &=\frac{1}{2}\frac{d}{dx}\Bigg(Nx^{(N-1)}(b-x)^N-Nx^N(b-x)^{(N-1)}\Bigg)u^2\\
  %   &= \frac{1}{2}\Bigg(N(N-1)x^{(N-2)}(b-x)^{N}-2N^2x^{(N-1)}(b-x)^{(N-1)}+N(N-1)x^N(b-x)^{(N-2)}\Bigg)u^2\\
   %  &=\frac{1}{2}\big(x(b-x)\big)^{(N-2)}\Bigg(N(N-1)(b-x)^2-2N^2x(b-x)+N(N-1)x^2\Bigg)u^2
%\end{aligned}
%\end{equation*}}\end{comment}
Denote $$J=\Big\{N(N-1)(b-x)^2-2N^2x(b-x)+N(N-1)x^2\Big\}.$$
Applying Young’s inequality, we have
\begin{equation}\label{estimate_Li}
    \begin{split}
      L_1&\leq \frac{\beta}{4}\Bigg( \big(x(b-x)\big)^Nu^{2+2\delta}\Bigg)+\frac{\delta}{(1+\delta)\left(2^\frac{\delta-1}{\delta}\beta^{\frac{1}{\delta}}(1+\delta)^{\frac{1}{\delta}}\right)}\left(\frac{1}{2}\big(x(b-x)\big)^{\frac{\delta(N-2)-2}{\delta}}J^{\frac{1+\delta}{\delta}}\right),\\
      L_2&\leq \frac{\beta}{4}\Bigg( \big(x(b-x)\big)^Nu^{2+2\delta}\Bigg)+\frac{\left(N\alpha/\delta+2\right)^\frac{2\delta+2}{\delta}}{\left(\frac{\beta}{4}\right)^{\left(\frac{\delta+2}{\delta}\right)}\left(\frac{2\delta+2}{\delta+2}\right)^{\left(\frac{\delta+2}{\delta}\right)}}\cdot \frac{\delta}{2\delta+2}(b-2x)^{\frac{2\delta+2}{\delta}}\left(x(b-x)\right)^{\frac{N\delta-2\delta-2}{\delta}},\\
      L_3&\leq \frac{\beta}{2}\Bigg( \big(x(b-x)\big)^Nu^{2+2\delta}\Bigg)+\frac{\left(\beta(1+\gamma)\right)^\frac{2\delta+2}{\delta}}{\left(\frac{\beta}{2}\right)^{\left(\frac{\delta+2}{\delta}\right)}\left(\frac{2\delta+2}{\delta+2}\right)^{\left(\frac{\delta+2}{\delta}\right)}}\cdot \frac{\delta}{2\delta+2}\left(x(b-x)\right)^{{N}}.
    \end{split}
\end{equation}
Using this estimates \eqref{estimate_Li} in \eqref{I_g}, we obtain
{\small\begin{equation}
    \begin{split}
        I_g(t)\leq \int_0^b &\Bigg[ \frac{\delta}{(1+\delta)\left(2^\frac{\delta-1}{\delta}\beta^{\frac{1}{\delta}}(1+\delta)^{\frac{1}{\delta}}\right)}\left(\frac{1}{2}\big(x(b-x)\big)^{\frac{\delta(N-2)-2}{\delta}}J^{\frac{1+\delta}{\delta}}\right)\\&\hspace{2cm}+ \frac{\left(N\alpha/\delta+2\right)^\frac{2\delta+2}{\delta}}{\left(\frac{\beta}{4}\right)^{\left(\frac{\delta+2}{\delta}\right)}\left(\frac{2\delta+2}{\delta+2}\right)^{\left(\frac{\delta+2}{\delta}\right)}}\cdot \frac{\delta}{2\delta+2}(b-2x)^{\frac{2\delta+2}{\delta}}\left(x(b-x)\right)^{\frac{N\delta-2\delta-2}{\delta}} \\&\hspace{3.5cm}+ \frac{\left(\beta(1+\gamma)\right)^\frac{2\delta+2}{\delta}}{\left(\frac{\beta}{2}\right)^{\left(\frac{\delta+2}{\delta}\right)}\left(\frac{2\delta+2}{\delta+2}\right)^{\left(\frac{\delta+2}{\delta}\right)}}\cdot \frac{\delta}{2\delta+2}\left(x(b-x)\right)^{{N}}\Bigg]\; dx\\
        &:= L(b,N).
    \end{split}
\end{equation}}

\end{proof}
Using this lemma, we now prove \ref{thm1}
\begin{proof}[\bf{Proof of theorem \ref{thm1}.}]  Let $u_T\in L^2(0,1).$ Let $u(t)$ be the solution of the system \eqref{cntrl_eqn} and $T>0.$ Then, we get
\begin{equation}\label{Y(T)}
    \begin{split}
        ||u_T-u(T)||_{L^2(0,1)}&=\Bigg(\int_0^1 |u_T(x)-u(T,x)|^2\; dx \Bigg)^\frac{1}{2}\\
        &\geq \Bigg(\int_{\frac{b}{4}}^{\frac{b}{2}} |u_T(x)-u(T,x)|^2\; dx \Bigg)^\frac{1}{2}\\
         &\geq ||u_T||_{L^2\left(\frac{b}{4},\frac{b}{2}\right)}-||u(T,\cdot)||_{L^2\left(\frac{b}{4},\frac{b}{2}\right)}.
\end{split}
\end{equation}
In virtue if \eqref{estimt1} for any $T>0$ we have 
    \begin{equation}
        \begin{split}
            \int_0^b \big(x(b-x)\big)^N u^2(T,x)\;dx\leq \int_0^b \big(x(b-x)\big)^N u_0^2(x)\;dx+2TL(b,N).
        \end{split}
    \end{equation}
Using the inequality $$\frac{3^{-N}b^{-2N}}{2^{-3N}}\leq \big(x(b-x)\big)^{-N}\leq \frac{b^{-2N}}{2^{-3N}},\;\; {\text {for}} \; x\in [b/4,b/2],$$
we obtain that 
\begin{equation}\label{estimate_u(T)}
    \begin{split}
       ||u(T,\cdot)||^2_{L^2\left(\frac{b}{4},\frac{b}{2}\right)}&\leq \int_\frac{b}{4}^\frac{b}{2} \big(x(b-x)\big)^{-N}\big(x(b-x)\big)^N u^2(T,x)\;dx\\
       &\leq \bigg(\frac{b^2}{8}\bigg)^{-N}\int_\frac{b}{4}^\frac{b}{2}\big(x(b-x)\big)^N u^2(T,x)\;dx\\
       &\leq \bigg(\frac{b^2}{8}\bigg)^{-N}\int_0^b \big(x(b-x)\big)^N u^2(T,x)\;dx\\
       &\leq \bigg(\frac{b^2}{8}\bigg)^{-N} \int_0^b \big(x(b-x)\big)^N u_0^2(x)\;dx+2T\bigg(\frac{b^2}{8}\bigg)^{-N}L(b,N).
    \end{split}
\end{equation}
Let $T>0$ be fixed. Let $\epsilon_0>0$ be given. Let $u_T\in L^2(0,1)$ satisfying the condition 
\begin{equation}\label{assumption_uT}
   ||u_T||_{L^2\left(\frac{b}{4},\frac{b}{2}\right)}>\bigg(\frac{b^2}{8}\bigg)^{\frac{-N}{2} }\Bigg[ \int_0^b \big(x(b-x)\big)^N u_0^2(x)\;dx \;+2TL(b,N)\Bigg]^{\frac{1}{2}}+\epsilon_0.
\end{equation}
Equation \eqref{assumption_uT} and \eqref{estimate_u(T)} together gives 
\begin{equation*}
  ||u_T||_{L^2\left(\frac{b}{4},\frac{b}{2}\right)}>   ||u(T,\cdot)||_{L^2\left(\frac{b}{4},\frac{b}{2}\right)}+\epsilon_0.
\end{equation*}
    Using this equation in \eqref{Y(T)} we obtain
    \begin{equation*}
         ||u_T-u(T)||_{L^2(0,1)}>\epsilon_0.
    \end{equation*}
    This inequality implies that the system \eqref{cntrl_eqn} is not approximately controllable.

\end{proof}
\begin{proof}[\bf{Proof of Theorem~\ref{thm2}}]
The proof follows exactly as in the proof of Theorem~\ref{thm1}.
\end{proof}
\begin{remark}
    From \eqref{estimate_u(T)} one can observe that for a given $T>0$ and $u_0\in L^2(0,1)$, if the target $u_T\in L^2(0,1)$  lies outside of a large ball i.e, in particular, $u_T$ satisfies \eqref{assumption_uT} with $\epsilon_0=0$, then, it is impossible to reach arbitrarily close to the state $u_T$ in time $T.$ 
\end{remark}

\section{Steady States and Their Connected Components}
\begin{remark}\label{PAC}
    Based on the work of Coron~\cite{Coron2004}, one may expect some (partial) approximate controllability result. More precisely, we raise the following question: given any two states $u_0, u_1\in L^2(0,1)$ and any $\hat\delta>0,$ do there exist $T=T(\Hat\delta, u_0, u_1)>0$ and $f\in L^2(0, T; L^2(0, 1))$ such that the solution $u$ to \eqref{cntrl_eqn} satisfies
    \begin{align*}
    \|u(T)-u_1\| \le \hat\delta .
\end{align*}
At present, we don't have a complete answer to this question. We have positive results only in the case, where $u_0, u_1$ are steady states lying in the same connected components of the set of steady states endowed with the $C^2$-topology(see section~\ref{QSD}). We note that this is a weaker notion than approximate controllability, since here the time $T$ is allowed to depend 
    on the tolerance $\hat{\delta}$, the initial state $u_0$, and 
    the target state $u_1$, whereas in approximate controllability 
    the time $T$ is prescribed in advance.
For further unknown results and open questions in this regard, we refer Section.
\end{remark}
\subsection{Characterization of steady state}
The following lemmas provide families of nontrivial solutions of the
ordinary differential equation in \eqref{steady_state} on the intervals
$[0,b]$ and $[c,1]$, satisfying the conditions
$u(0)=0$ and $u(1)=0$, respectively. By combining a solution on
$[0,b]$ with a solution on $[c,1]$, one obtains nontrivial functions
\[
u\in C^2\bigl([0,b]\cup[c,1]\bigr)
\]
that satisfy \eqref{steady_state} on $(0,b)\cup(c,1)$.

\begin{lem}\label{lem:left_ivp}
There exists $s_0>0$ such that, for each
$s\in(-s_0,s_0)$, the initial value problem
\begin{equation}\label{ivp_ss}
\begin{cases}
u_{xx}-\alpha u^\delta u_x+\beta u(1-u^\delta)(u^\delta-\gamma)=0,
\\[1mm]
u(0)=0,
\\[1mm]
u_x(0)=s,
\end{cases}
\end{equation}
admits a unique solution $u\in C^2([0,b])$.
Moreover, if $s\neq 0$, then $u$ is nontrivial.
\end{lem}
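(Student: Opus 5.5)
We rewrite \eqref{ivp_ss} as a first-order system in $Y=(y_1,y_2)=(u,u_x)$:
\[
Y'=F(Y):=\bigl(y_2,\ \alpha y_1^\delta y_2-\beta y_1(1-y_1^\delta)(y_1^\delta-\gamma)\bigr),\qquad Y(0)=(0,s).
\]
The vector field $F$ is a polynomial in $(y_1,y_2)$. Hence it is $C^\infty$ and locally Lipschitz on $\R^2$. The Picard--Lindel\"of theorem therefore gives, for every $s$, a unique maximal solution $Y_s$ on some interval $[0,x^*(s))$. Moreover, either $x^*(s)=\infty$ or $|Y_s(x)|\to\infty$ as $x\to x^*(s)$. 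The problem thus reduces to showing $x^*(s)>b$ for all $|s|<s_0$, for some $s_0>0$.

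\textbf{Step 1: continuous dependence around the trivial solution.} For $s=0$ the unique solution is $Y\equiv 0$, since $F(0)=0$, and it exists on all of $[0,\infty)$. We fix the compact set $K=\{|Y|\le 1\}$ and let $M$ be a Lipschitz constant of $F$ on $K$. As long as $Y_s$ stays in $K$, Gr\"onwall's inequality applied to $|Y_s(x)-0|$ gives
\[
|Y_s(x)|\le |s|e^{Mx}.
\]
We choose $s_0=e^{-Mb}/2$. For $|s|<s_0$, a continuity (bootstrap) argument shows that $Y_s$ cannot leave the ball $\{|Y|\le 1/2\}$ before $x=b$. The solution therefore does not blow up, and it exists on $[0,b]$. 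This bootstrap is the only step that needs care: one must make sure the exit time from $K$ exceeds $b$. It is routine once $M$ is fixed on $K$ before $s_0$ is chosen. Alternatively, one can simply cite the standard theorem on continuous dependence on initial data, which says the domain of existence is open and contains $(0,[0,\infty))$.

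\textbf{Step 2: regularity.} On $[0,b]$ the pair $Y_s$ is $C^1$, so $u=y_1\in C^1$ with $u_x=y_2$. Then $u_{xx}=y_2'$ is a continuous expression in $(u,u_x)$, so $u\in C^2([0,b])$; in fact $u$ is smooth. Uniqueness of $u$ in $C^2([0,b])$ follows because any $C^2$ solution produces a $C^1$ solution $(u,u_x)$ of the system, which is unique by Lipschitz continuity of $F$.

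\textbf{Step 3: nontriviality.} Suppose $s\neq0$ and $u\equiv 0$ on $[0,b]$. Then $u_x(0)=0\neq s$, a contradiction. Hence $u$ is nontrivial. In fact $u$ cannot vanish identically on any subinterval either, since $Y\equiv 0$ is itself a solution and uniqueness would force $Y_s\equiv0$, but only this simple point is needed.
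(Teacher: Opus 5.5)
Your proposal is correct and follows essentially the same route as the paper. You rewrite the problem as a first-order system, use $F(0)=0$ to get $|F(Y)|\le M|Y|$ on a fixed ball, and apply Gr\"onwall to obtain $|Y_s(x)|\le |s|e^{Mx}$; you then choose $s_0$ below $re^{-Mb}$, close with a continuation/bootstrap argument, and read off nontriviality from $u_x(0)=s\neq 0$. The only cosmetic difference is that the paper runs Gr\"onwall in differential form on $W=|Y|^2$ with a general radius $r$, whereas you use the integral form on $|Y|$ with $r=1$.
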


\begin{lem}\label{lem:right_ivp}
 There exists $\hat{s}_0>0$ such that,
for every $\hat{s}\in(-\hat{s}_0, \hat{s}_0)$, the initial value problem
\begin{equation}\label{ivp_right}
\begin{cases}
u_{xx}-\alpha u^\delta u_x
+\beta u(1-u^\delta)(u^\delta-\gamma)=0,
\\[1mm]
u(1)=0,
\\[1mm]
u_x(1)=\hat{s},
\end{cases}
\end{equation}
admits a unique solution  $u\in C^2([c,1])$.
Moreover, if $\hat{s}\neq0$, then $u$ is nontrivial.
\end{lem}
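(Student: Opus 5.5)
The statement to prove is Lemma~\ref{lem:right_ivp}. I would reduce it to Lemma~\ref{lem:left_ivp} by reflection, and in any case give a self-contained continuity argument that works on $[c,1]$ directly.

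\textbf{Reformulation as a first-order system.} Write the ODE as a system for $U=(u,v)$, $v=u_x$:
\begin{equation*}
U'=F(U):=\bigl(v,\ \alpha u^\delta v-\beta u(1-u^\delta)(u^\delta-\gamma)\bigr),
\end{equation*}
posed backward from $x=1$ with $U(1)=(0,\hat s)$. Since $F$ is a polynomial, it is locally Lipschitz on $\mathbb R^2$. Picard--Lindel\"of then gives a unique maximal solution through $x=1$, and it is automatically $C^\infty$, hence $C^2$, on its interval of existence. Uniqueness on $[c,1]$ follows from the same local Lipschitz property via Gr\"onwall.

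\textbf{Existence on the whole interval for small $\hat s$.} For $\hat s=0$ the solution is $U\equiv 0$, which exists on all of $\mathbb R$, in particular on $[c,1]$. Continuous dependence on initial data for locally Lipschitz ODEs then applies. Fix the compact interval $[c,1]$ and the tube $K=\{|U|\le 1\}$, and let $M$ be the Lipschitz constant of $F$ on $K$. As long as the solution stays in $K$, Gr\"onwall gives
\begin{equation*}
|U(x)|\le |\hat s|\,e^{M(1-x)}\le |\hat s|\,e^{M(1-c)}.
\end{equation*}
Choosing $\hat s_0=\tfrac12 e^{-M(1-c)}$ keeps the solution inside $K$. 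The continuation (blow-up) alternative then forces the maximal solution to exist on all of $[c,1]$. I expect this step to be the only real point, and it is routine.

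\textbf{Nontriviality.} If $\hat s\neq0$, then $u_x(1)=\hat s\neq 0$, so $u$ is not identically zero. Equivalently, by uniqueness, $u\equiv0$ would force $\hat s=0$.

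\textbf{Alternative via reflection.} The change of variables $w(y)=u(1-y)$ turns the equation into $w_{yy}+\alpha w^\delta w_y+\beta w(1-w^\delta)(w^\delta-\gamma)=0$, with $w(0)=0$ and $w_y(0)=-\hat s$, posed on $[0,1-c]$. This is the same type of problem as Lemma~\ref{lem:left_ivp}, with $\alpha$ replaced by $-\alpha$ and $b$ replaced by $1-c$. Since the proof of Lemma~\ref{lem:left_ivp} does not use the sign of $\alpha$, it applies verbatim. I would present the direct argument above and mention the reflection as a remark.
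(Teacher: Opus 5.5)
Your proposal is correct and follows the paper's own argument. You rewrite the equation as a first-order system and get a unique maximal solution through $x=1$ by Picard--Lindel\"of. You then use $G(0)=0$ and a Gr\"onwall bound run backward from $x=1$ to keep the trajectory in a fixed ball when $|\hat s|$ is small (with $\hat s_0$ of the form $r\,e^{-C(1-c)}$). Existence on $[c,1]$ follows from the continuation criterion, and nontriviality from $u_x(1)=\hat s\neq 0$. Your reflection $w(y)=u(1-y)$ turns the problem into that of Lemma~\ref{lem:left_ivp} with $\alpha\mapsto-\alpha$ and $b\mapsto 1-c$. It is a clean way to make the paper's ``analogous'' claim precise, since that proof only uses $G\in C^1$ and $G(0)=0$.
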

The proofs of Lemmas~\ref{lem:left_ivp} and~\ref{lem:right_ivp} are given in the Appendix.

 To study the approximate controllability properties of \eqref{cntrl_eqn}, it is essential to characterize the set $\mathcal{S}$ of steady states. To this end, we seek functions $u\in C^2([0,1])$ satisfying \eqref{steady_state}. To be more specific, we first seek solutions $u \in C^2([0,b]\cup[c,1])$ of \eqref{steady_state} and subsequently extend them to functions in $C^2([0,1])$. A convenient way to construct a $C^2$-extension of $u$ on $[b,c]$ is through a quintic polynomial. Its coefficients are uniquely determined by the values of $u$, $u'$, and $u''$ at $b$ and $c$. However, this extension is not unique; there exist infinitely many $C^2$-functions on $[b,c]$ satisfying the same matching conditions. Moreover, if $u_1,u_2\in\mathcal{S}$ satisfy
\[
u_1(x)=u_2(x), \qquad x\in [0,b]\cup[c,1],
\]
then $u_1$ and $u_2$ belong to the same connected component of $\mathcal{S}$. Indeed, since the two steady states coincide outside $[b,c]$, one can continuously deform $u_1$ into $u_2$ by varying only their values on $[b,c]$ while keeping the endpoint matching conditions at $b$ and $c$ unchanged.

The following lemma characterizes the connected components of $\mcS$.
\begin{lem}[Characterization of connected components of $\mathcal S$]
\label{lem:connected_components_S}
Let $u_0,u_1\in\mathcal S$. Then $u_0$ and $u_1$ belong to the same
connected component of $\mathcal S$ if and only if, for every
\[
(s,\hat s)\in
\Bigl\{
\tau\bigl(u_0'(0),u_0'(1)\bigr)
+(1-\tau)\bigl(u_1'(0),u_1'(1)\bigr):
\tau\in[0,1]
\Bigr\},
\]
the maximal solution of
\[
\begin{cases}
u_{xx}-\alpha u^\delta u_x
+\beta u(1-u^\delta)(u^\delta-\gamma)=0,
\\
u(0)=0,\qquad u_x(0)=s,
\end{cases}
\]
is defined on $[0,b]$, and the maximal solution of
\[
\begin{cases}
u_{xx}-\alpha u^\delta u_x
+\beta u(1-u^\delta)(u^\delta-\gamma)=0,
\\
u(1)=0,\qquad u_x(1)=\hat s,
\end{cases}
\]
is defined on $[c,1]$.
\end{lem}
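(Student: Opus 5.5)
The proof rests on reducing the structure of $\mathcal S$ to a two-parameter shooting problem. Every $u\in\mathcal S$ is determined on $[0,b]$ by $s=u'(0)$, since $u$ solves the ODE there with $u(0)=0$ and Cauchy--Lipschitz uniqueness applies; likewise it is determined on $[c,1]$ by $\hat s=u'(1)$. On $[b,c]$ the function $u$ is an arbitrary $C^2$ function matching the six data $u,u',u''$ at $b$ and $c$. Accordingly, I consider the map $\Phi:\mathcal S\to\R^2$, $\Phi(u)=(u'(0),u'(1))$, which is continuous in the $C^2$-topology. Its image is
\[
D:=\{(s,\hat s): \text{the maximal solutions with data } s \text{ at } 0 \text{ and } \hat s \text{ at } 1 \text{ exist on } [0,b] \text{ and } [c,1] \text{ respectively}\}.
\]
This set has product form $D=D_0\times D_1$, because a solution defined on $[0,b]$ and a solution defined on $[c,1]$ can always be glued together on $[b,c]$ by a quintic polynomial, or more generally by any $C^2$ interpolant.

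For the ``if'' direction, I use the segment joining $(u_0'(0),u_0'(1))$ and $(u_1'(0),u_1'(1))$, which by hypothesis lies in $D$. Let $v_\tau$ and $w_\tau$ denote the ODE solutions on $[0,b]$ and $[c,1]$ with the convex-combination data. Continuous dependence on initial data on compact intervals shows that $\tau\mapsto v_\tau$ and $\tau\mapsto w_\tau$ are continuous into $C^2$; for $C^2$ this follows from $C^1$ continuity together with the equation, which expresses $u''$ in terms of $u$ and $u'$. I then define $U_\tau$ on $[b,c]$ as the quintic Hermite interpolant of the endpoint data of $v_\tau$ and $w_\tau$. Its coefficients depend linearly on these data, so $\tau\mapsto U_\tau\in\mathcal S$ is a continuous path. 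The endpoints $U_0$ and $U_1$ agree with $u_0$ and $u_1$ only outside $[b,c]$. To close the gap I join $u_0$ to $U_0$ on $[b,c]$ by the straight line $\theta u_0+(1-\theta)U_0$, which preserves the matching data at $b$ and $c$ and so stays in $\mathcal S$; the same construction joins $U_1$ to $u_1$. Concatenating the three paths gives path-connectedness, hence the two steady states lie in the same component.

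For the ``only if'' direction, I take a path $\gamma$ in $\mathcal S$ from $u_0$ to $u_1$ (reading ``component'' as path component, as in Theorem~\ref{Main_thm}). Then $\Phi\circ\gamma$ is a continuous curve in $D$ joining the two endpoint pairs. The statement, however, concerns the straight segment, so I must show that $D$ contains the segment whenever it contains such a curve. This is the main obstacle. My plan is to prove that $D_0$ and $D_1$ are intervals; since $D$ is a product and the curve lies in $D$, the intervals would then contain the coordinate projections of the segment.

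To show $D_0$ is an interval, I would argue as follows. Take $s_1<s_2$ both in $D_0$, so the solutions with these slopes exist on $[0,b]$. The comparison principle for the scalar second-order ODE, that is, Sturm-type ordering of solutions sharing $u(0)=0$ with ordered slopes until their first crossing, combined with a phase-plane argument, should show that a solution with an intermediate slope $s$ is confined to the region bounded by the two existing orbits, or at least to a bounded region. It therefore cannot blow up before $x=b$. Blow-up can only be driven by $|u|\to\infty$, and the trapping comes from the orbits of $s_1$ and $s_2$, together with $u\equiv0$ when $0\in(s_1,s_2)$. If the ordering argument fails because orbits cross, I would instead use an energy bound in the phase plane: the $-\beta u^{2\delta+1}$ term is restoring and $-\alpha u^\delta u_x$ is only first-order. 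This should yield a uniform existence bound on compact slope sets, and the phase-plane trapping is where I expect the real work to lie.
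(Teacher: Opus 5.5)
Your ``if'' direction is sound. Continuous dependence on the compact intervals $[0,b]$ and $[c,1]$, quintic gluing on $[b,c]$, and the convex interpolation on $[b,c]$ that links the endpoints of $\tau\mapsto U_\tau$ to $u_0$ and $u_1$ do give a path in $\mathcal S$.

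The gap is in the ``only if'' direction. The step you single out as the main obstacle, showing that $D_0$ and $D_1$ are intervals, is unnecessary and in general false, so that plan cannot be completed. Since the fibres of $\Phi$ are convex and your gluing is a continuous section of $\Phi$ over $D_0\times D_1$, the set $\mathcal S$ is connected exactly when $D_0$ and $D_1$ are both intervals. Your claim would therefore make the lemma (and the hypothesis of Theorem~\ref{Main_thm}) vacuous, whereas the paper lists the number of components of $\mathcal S$ as open. Your phase-plane heuristics also have the wrong sign. The spatial ODE reads $u_{xx}=\alpha u^\delta u_x+\beta u^{2\delta+1}-\beta(1+\gamma)u^{\delta+1}+\beta\gamma u$, so the top-order term is repelling, not restoring. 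It causes blow-up at finite $x$ for large positive slopes, which is why Lemma~\ref{lem:left_ivp} only covers small $|s|$; moreover $u=0$ is a saddle. As a concrete example, take $\delta=1$, $\gamma<\frac12$ and, to begin with, $\alpha=0$. Then $\frac12 u_x^2-G(u)$ with $G(u)=\int_0^u\beta v(v-1)(v-\gamma)\,dv$ is conserved, and $G(1)=\beta(2\gamma-1)/12<0$. For $0<s<\sqrt{-2G(1)}$ the orbit turns back before $u=1$, recrosses $u=0$, and blows up to $-\infty$ at a point $X(s)$. This $X(s)$ tends to $+\infty$ at both ends of the range (saddles at $u=0$ and $u=1$), while $\min X$ scales like $\beta^{-1/2}$. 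For $\beta$ large, $D_0$ thus contains slopes near both ends but misses one in between, and this persists for small $\alpha>0$. No comparison or energy argument can give the interval property.

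The missing idea is that only the one-dimensionality of each factor matters, not any global structure of $D_0$ or $D_1$. Let $K$ be the connected component of $\mathcal S$ containing $u_0$ and $u_1$. The map $u\mapsto u'(0)$ is continuous on $\mathcal S$ for the $C^2$-topology, so the image of $K$ is a connected subset of $\mathbb R$, i.e.\ an interval. It contains $u_0'(0)$ and $u_1'(0)$. It lies in $D_0$, because each $u\in K$ restricted to $[0,b]$ is the solution of the left problem with slope $u'(0)$. Hence the closed interval between $u_0'(0)$ and $u_1'(0)$ lies in $D_0$, and likewise the interval between $u_0'(1)$ and $u_1'(1)$ lies in $D_1$. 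Every point of the segment in the statement has its two coordinates in these intervals, which is exactly the required condition. This argument works for connected components as stated, so you need not reinterpret them as path components. Together with your ``if'' direction, it also shows that the two notions coincide for $\mathcal S$.
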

The proof of Lemma~\ref{lem:connected_components_S} is similar to that of Lemma~3.1 in \cite{Shirshendu20} and is therefore omitted.

\section{Approximate Controllability Between Steady States via Quasi-Static Deformation}\label{QSD}

\subsection{Reduction of the problem}
Let $\epsilon>0,\ \tau\in [0,1].$ Let $\tau\mapsto \bar u(\tau)$ be a $C^1$ path in $\mcS,$ connecting $u_0$ and $u_1.$ Let us define $\tau\mapsto \bar f(\tau)$ by 
$$\bar f(\tau)( x):= \alpha \bar{u}^\delta(\tau, x) \bar{u}_x(\tau, x)-\beta \bar{u}(\tau, x)(1-\bar{u}^\delta(\tau, x))(\bar{u}^\delta(\tau, x) -\gamma)-\bar{u}_{xx}(\tau, x),$$
for $\tau\in[0,1]$ and $x\in[0,1].$ Setting $\tau=\epsilon t,$ we have $\tau\in [0,1]$ if and only if $t\in[0,1/{\epsilon}].$ For $t\in[0,1/{\epsilon}]$ and $x\in[0,1],$ let us define
\begin{equation}\label{Intro_z_h}
    \begin{cases}
    z(t, x) : =  u(t, x)- \bar{u}(\epsilon t, x),\\
    h(t, x) : =  f(t, x)- \bar{f}(\epsilon t, x).     
    \end{cases}
\end{equation}
Thanks to the time reparametrization introduced above, we can perform a pole-shifting procedure on the finite-dimensional linear system corresponding to the unstable part of the equation. A suitable feedback control, designed via pole placement, then stabilizes the full system.

For simplicity, we define $F:\mathbb{R}\to\mathbb{R}$ by
\begin{align}\label{Defn_of_F}
    F(v):= \beta v(1-v^\delta)(v^\delta-\gamma),\quad v\in \mathbb{R}.
\end{align}
 Then a straightforward computation gives
\begin{align}\label{Defn_F_prime}
    F^{\prime}(v)=\beta\big[(1-v^{\delta}) (v^{\delta} -\gamma)+\delta v^{\delta}(1-2v^{\delta} +\gamma)\big],\quad v\in \mathbb{R}.
\end{align}
Moreover, since $F$ is a polynomial of degree $2\delta+1$, a Taylor expansion 
with integral remainder around $\bar{v} \in \mathbb{R}$ yields, for every 
$\bar{h} \in \mathbb{R}$,
\begin{align}\label{Taylor_exp_F}
    F(\bar{v}+\bar{h}) - F(\bar{v}) - F'(\bar{v})\bar{h} 
    = \bar{h}^{2}\int_{0}^{1}(1-s)\,F''(\bar{v}+s\bar{h})\,ds.
\end{align}
Since $F''$ is a polynomial of degree $2\delta - 1$, there exists $C>0$, 
depending only on the coefficients of $F$, such that
\begin{align}\label{Fpp_growth}
    |F''(v)| \leq C\big(1+|v|^{2\delta-1}\big), \qquad \forall\, v \in \mathbb{R}.
\end{align}
Hence, using $|\bar{v}+s\bar{h}| \leq |\bar{v}| + |\bar{h}|$ for $s\in[0,1]$ 
together with the elementary inequality $(a+b)^{k}\lesssim a^{k}+b^{k}$ for 
$a,b\geq 0$, we obtain
\begin{align*}
    \sup_{s\in[0,1]} \big|F''(\bar{v}+s\bar{h})\big| 
    \;\lesssim\; 1 + |\bar{v}|^{2\delta-1} + |\bar{h}|^{2\delta-1}.
\end{align*}
Combining this with \eqref{Taylor_exp_F}, we deduce that
\begin{align}\label{Taylor_remainder_bound}
    \big|F(\bar{v}+\bar{h}) - F(\bar{v}) - F'(\bar{v})\bar{h}\big| 
    \;\lesssim\; \big(1+|\bar{v}|^{2\delta-1}\big)|\bar{h}|^{2} + |\bar{h}|^{2\delta+1},
\end{align}
where the implicit constants depend only on $\delta$ and on the coefficients of $F$.

From \eqref{cntrl_eqn}, \eqref{steady_state}, and \eqref{Intro_z_h}, we obtain 
\begin{align*}
  z_t +\epsilon \bar{u}_{\tau} & = (z_{xx}+\bar{u}_{xx})-\alpha  (z+\bar{u})^{\delta}( z_x+\bar{u}_x)  + F(z+\bar{u})
      -(\bar{u}_{xx}-\alpha \bar{u}^{\delta} \bar{u}_x + F(\bar{u})) 
 +\chi_{\omega} h \notag \\
 & = z_{xx}-\alpha  (z+\bar{u})^{\delta} z_x
      -\alpha  \big((z+\bar{u})^{\delta}-\bar{u}^{\delta}\big) \bar{u}_x + F(z+\bar{u})-F(\bar{u}) +\chi_{\omega} h. 
\end{align*}
 Further using the binomial expansion
\[
(z+\bar{u})^{\delta}
= \bar{u}^{\delta} +\delta \bar{u}^{\delta-1} z+ \sum_{j=2}^{\delta}\binom{\delta}{j} z^{j}\bar{u}^{\delta-j},
\]
 we deduce that:
\begin{align*}
    z_t &= z_{xx}-\alpha\bigg( \bar{u}^{\delta} +\delta \bar{u}^{\delta-1} z+ \sum_{j=2}^{\delta}\binom{\delta}{j} z^{j}\bar{u}^{\delta-j}\bigg) z_x-\alpha\bigg(\delta \bar{u}^{\delta-1} z+ \sum_{j=2}^{\delta}\binom{\delta}{j} z^{j}\bar{u}^{\delta-j}\bigg)\bar{u}_x \\
      & \qquad + F(z+\bar{u})-F(\bar{u}) +\chi_{\omega} h-\epsilon \bar{u}_{\tau} \\
      &= z_{xx}-\alpha \bar{u}^{\delta} z_x-\alpha\delta \bar{u}^{\delta-1}\bar{u}_x z +F^{\prime}(\bar{u})z -\alpha\bigg(\delta \bar{u}^{\delta-1} z+ \sum_{j=2}^{\delta}\binom{\delta}{j} z^{j}\bar{u}^{\delta-j}\bigg) z_x \\
      & \qquad -\alpha\bigg(\sum_{j=2}^{\delta}\binom{\delta}{j} z^{j}\bar{u}^{\delta-j}\bigg)\bar{u}_x + \big( F(z+\bar{u})-F(\bar{u})- F^{\prime}(\bar{u})z\big) +\chi_{\omega} h - \epsilon \bar{u}_{\tau}.
\end{align*}
 Thus, $z$ satisfies the following equation in $(0,T)\times (0,1)$ with $T=1/\epsilon,$
\begin{align}\label{z_eqn}
    \begin{cases}
      z_t =z_{xx}-\alpha \bar{u}^{\delta} z_x-\alpha\delta \bar{u}^{\delta-1}\bar{u}_x z +F^{\prime}(\bar{u})z-\alpha\big(\delta \bar{u}^{\delta-1} z+ \sum_{j=2}^{\delta}\binom{\delta}{j} z^{j}\bar{u}^{\delta-j}\big) z_x \\
        \qquad\qquad-\alpha\big(\sum_{j=2}^{\delta}\binom{\delta}{j} z^{j}\bar{u}^{\delta-j}\big)\bar{u}_x + \big( F(z+\bar{u})-F(\bar{u})- F^{\prime}(\bar{u})z\big) \\
      \qquad\qquad +\chi_{\omega} h- \epsilon \bar{u}_{\tau},\;\;\; {\text{in}} \;\; (0,T)\times(0,1),\\
     z(t,0)=z(t,1)=0,\;\;\;\; t\in(0,T),\\
     z(0,x)=0,\;\;\;\; x\in (0,1).
    \end{cases}
\end{align}

We introduce an one-parameter family of unbounded linear operators for $0\leq \tau \leq 1$ $\big(\mcA(\tau), \mcD(\mcA(\tau))\big)$ in $L^2(0,1)$ defined by
$$ \mcD(\mcA(\tau)):=H^1_0 (0,1)\cap H^2(0,1),$$
\begin{align}\label{Defn_A_tau}
  \mcA(\tau)(z):=  z_{xx}+p(\tau, \cdot) z_x+q(\tau, \cdot)z,
\end{align}
with 
$$p(\tau,\cdot):=-\alpha \bar{u}^{\delta}(\tau, \cdot),\quad q(\tau,\cdot):= F^{\prime}(\bar{u}(\tau, \cdot))-\alpha\delta \bar{u}^{\delta-1}(\tau, \cdot)\bar{u}_x(\tau, \cdot) ,$$
and
$$ F^{\prime}(\bar{u}(\tau, \cdot))= \beta\big[\big(1-\bar{u}^{\delta}(\tau, \cdot)\big) \big(\bar{u}^{\delta}(\tau, \cdot) -\gamma\big)+\delta \bar{u}^{\delta}(\tau, \cdot)\big(1-2\bar{u}^{\delta}(\tau, \cdot) +\gamma\big)\big].$$

\begin{remark}\label{rem:p_q_u_regularity}
Since $\bar u\in C^1\big([0,1];C^2([0,1])\big)$
and $F$ is a polynomial, the coefficients
\[
p(\tau,x)=-\alpha \bar u(\tau,x)^\delta \quad \text{and} \quad q(\tau,x)
=
F'(\bar u(\tau,x))
-\alpha\delta \bar u(\tau,x)^{\delta-1}\bar u_x(\tau,x)
\]
satisfy
\[
p\in C^1\big([0,1];C^2([0,1])\big),
\qquad
q\in C^1\big([0,1];C^1([0,1])\big).
\]
In particular, $\bar u,\bar u_x,\bar u_\tau,p,q,p_\tau$, and $q_\tau$ are uniformly bounded on $[0,1]\times[0,1]$.
\end{remark}

Note that the domain $\mcD(\mcA(\tau))$ of $\mcA(\tau)$ are same for all $\tau \in[0,1].$ Hence, from onward we denote $\mcD(\mcA(\tau))$ by $\mcD$ for all $\tau\in[0,1].$ Let $\mathcal{B}:L^2(0,1)\rightarrow L^2(0,1)$ be defined by $\mathcal{B}(h):=\chi_{\omega}h$.
Then \eqref{z_eqn} can be rewritten as
 \begin{align}\label{z_eqn_in_operator_form}
   \begin{cases}
    z_t(t, \cdot)&=\mcA(\epsilon t)z(t, \cdot)+\mathcal{B}(h)(t)+\mcR(\epsilon, t, \cdot), \quad t\in [0,1/\epsilon]\\
    z(0)&=0,
  \end{cases}
 \end{align}
 where
 \begin{align}\label{Defn_R}
    \mcR(\epsilon, t, \cdot):=& -\alpha\bigg(\delta \bar{u}^{\delta-1} z+ \sum_{j=2}^{\delta}\binom{\delta}{j} z^{j}\bar{u}^{\delta-j}\bigg) z_x
        -\alpha\bigg(\sum_{j=2}^{\delta}\binom{\delta}{j} z^{j}\bar{u}^{\delta-j}\bigg)\bar{u}_x \\  
       & \quad +\big( F(z+\bar{u})-F(\bar{u})- F^{\prime}(\bar{u})z\big) -\epsilon \bar{u}_{\tau}(\epsilon t). \notag
 \end{align}
 
To establish Theorem~\ref{Main_thm}, it suffices to show that for a given neighborhood $\mathcal{M}_0$ of $0$ in $L^2(0,1)$ and $\epsilon>0$ sufficiently small, there exists a control $h\in L^2(0,T;L^2(0,1))$ such that the corresponding solution $z$ of \eqref{z_eqn} satisfies $z(T)\in \mathcal{M}_0$ with $T=1/\epsilon$. To this end, we construct a suitable control function and a Lyapunov functional that stabilizes the system \eqref{z_eqn}. It should be emphasized that the stabilization here is not in the usual sense (see~\cite{coron2007}, Chapter 7, Section 7.1), since $t\in [0,1/\epsilon]$ rather than $t\in [0,\infty)$. 

\subsection{Spectral analysis of the operator \texorpdfstring{$\mcA(\tau)$}{A(tau)}}\label{Spectral_Analysis_A}
The proof of Theorem~\ref{Main_thm} relies on the spectral analysis of the operator defined in \eqref{Defn_A_tau}. For $\tau\in[0,1]$, we study the eigenvalues of the operator $\mcA(\tau)$. To this end, we rewrite $\mcA(\tau)$ in Sturm--Liouville form as
\[
\mathcal{A}(\tau)(z) = \frac{1}{\rho(\tau,\cdot)}\big((\rho(\tau,\cdot) z_x)_x + \rho(\tau,\cdot) q(\tau,\cdot) z\big),
\]
where
\[
\rho(\tau,x):=\exp\!\left(\int_0^x p(\tau,s)\,ds\right), \quad x\in[0,1].
\]
For each $\tau \in [0,1]$, define the Hilbert space $\mcH(\tau) := L^2\bigl( (0,1),\ \rho(\tau,x)\, dx \bigr),$ equipped with the inner product
$$\langle z, w \rangle_{\mcH(\tau)} = \int_0^1 z(x) {w(x)}\, \rho(\tau,x)\, dx$$
and the corresponding norm
$$\|z\|_{\mcH(\tau)} = \left( \int_0^1 |z(x)|^2 \rho(\tau,x)\, dx \right)^{1/2}.$$
We call $\mcH(\tau)$ the weighted $L^2$ space with weight $\rho(\tau,\cdot)$. Similarly, we define the corresponding Sobolev spaces
$$\mcH^1_0(\tau) := H^1_0\bigl( (0,1),\ \rho(\tau,x)\, dx \bigr),$$
\[
\mcH^2(\tau) := H^2\bigl( (0,1),\ \rho(\tau,x)\, dx \bigr).
\]
endowed with the weighted norms
\[
\|z\|_{\mcH^1_0(\tau)} := \left(\int_0^1 \bigl(|z(x)|^2 + |z_x(x)|^2\bigr)\rho(\tau,x)\,dx\right)^{1/2},
\]
and
\[
\|z\|_{\mcH^2(\tau)} := \left(\int_0^1 \bigl(|z(x)|^2 + |z_x(x)|^2 + |z_{xx}(x)|^2\bigr)\rho(\tau,x)\,dx\right)^{1/2}.
\]
As $p\in C^1\big([0,1]; C^2[0,1]\big)$, there exist constants $0 < m_1 \leq m_2 < \infty$, such that
\[
m_1 \leq \rho(\tau,x) \leq m_2 \quad \forall\, \tau\in[0,1],\ \forall\, x\in[0,1].
\]
Consequently, the weighted norms are uniformly equivalent to the standard norms, i.e.,
\begin{align}\label{Norm_Equivalence}
\|\cdot\|_{\mcH(\tau)} \sim \|\cdot\|_{L^2(0,1)}, \quad
\|\cdot\|_{\mcH^1_0(\tau)} \sim \|\cdot\|_{H^1_0(0,1)}, \quad
\|\cdot\|_{\mcH^2(\tau)} \sim \|\cdot\|_{H^2(0,1)},
\end{align}
uniformly for all $\tau \in [0,1]$.

 We consider the following eigenvalue problem
\begin{align}\label{Eigenvalue_Problem}
    \begin{cases}
        \big(\rho(\tau,x)\phi^{\prime}(x)\big)^{\prime}+\rho(\tau,x)q(\tau, x)\phi(x)=\lambda(\tau)\rho(\tau,x) \phi(x),\quad \text{in}\quad (0,1),\\
        \phi(0)=\phi(1)=0.
    \end{cases}
\end{align}
The following result is a consequence of the Sturm--Liouville theory (see \cite{Coddington1955}).
\begin{thm}\label{Cons_of_SLT}
The eigenvalues and the corresponding eigenfunctions of 
\eqref{Eigenvalue_Problem} satisfy the following properties:
\begin{enumerate}
\item[(i)]\label{First} For each $\tau\in[0,1]$, the eigenvalue problem 
\eqref{Eigenvalue_Problem} has an infinite sequence of real eigenvalues which satisfy
\[
\lambda_1(\tau)>\lambda_2(\tau)>\cdots>\lambda_k(\tau)>\lambda_{k+1}(\tau)>\cdots \to -\infty \quad \text{as } k\to\infty .
\]
\item[(ii)] For each $k\in\N$ and $\tau\in[0,1]$, the eigenvalue 
$\lambda_k(\tau)$ is simple and the corresponding eigenfunction 
$\phi_k(\tau,\cdot)$ can be chosen so that $\{\phi_k(\tau,\cdot)\}_{k\in\N}$
forms an orthonormal basis of $\mcH(\tau)$. 
\item[(iii)]\label{Third} For each $k\in\N,$ the map $\tau\mapsto \lambda_k(\tau)$ is $C^1([0,1]),$ and the map $\tau\mapsto\phi_k(\tau, \cdot)$ is $C^1([0,1]; L^2(0,1));$ see, e.g., \cite{Kato1966}.
\item[(iv)] The operator $\mcA(\tau)$ is self-adjoint on $\mcH(\tau)$.
\end{enumerate}
\end{thm}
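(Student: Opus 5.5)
The plan is to reduce everything to classical regular Sturm--Liouville theory on $[0,1]$ with Dirichlet conditions. The preliminary observation is that, for each fixed $\tau$, $\rho(\tau,\cdot)\in C^3([0,1])$ is strictly positive; indeed $m_1\le\rho\le m_2$, and $\rho_x=p\rho$. Also $q(\tau,\cdot)\in C^1([0,1])$ by Remark~\ref{rem:p_q_u_regularity}. Hence \eqref{Eigenvalue_Problem} is a regular Sturm--Liouville problem with continuous, bounded coefficients.

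\textbf{Items (iv), (i) and (ii).} I would first check that $\mcA(\tau)z=\rho^{-1}((\rho z_x)_x+\rho q z)$, using $\rho_x=p\rho$. For $z,w\in\mcD$, two integrations by parts with $z=w=0$ at $x=0,1$ give
\[
\langle \mcA(\tau)z,w\rangle_{\mcH(\tau)}=-\int_0^1\rho z_xw_x\,dx+\int_0^1\rho qzw\,dx=\langle z,\mcA(\tau)w\rangle_{\mcH(\tau)},
\]
so $\mcA(\tau)$ is symmetric on $\mcH(\tau)$. For $\mu>\|q\|_\infty$, the operator $\mu-\mcA(\tau)$ is coercive on $\mcH^1_0(\tau)$, so Lax--Milgram yields a bounded inverse. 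Elliptic regularity in one dimension puts the range of this inverse in $\mcD$, which gives self-adjointness. The same inverse is compact, because $H^1_0\hookrightarrow L^2$ is compact and the norms are equivalent by \eqref{Norm_Equivalence}. The spectral theorem for compact self-adjoint operators then produces real eigenvalues of finite multiplicity, accumulating only at $-\infty$ (the operator is bounded above by $\|q\|_\infty$), together with an orthonormal eigenbasis of $\mcH(\tau)$. For simplicity, note that any two Dirichlet eigenfunctions for the same $\lambda$ both vanish at $0$, so they are proportional by uniqueness for the second-order ODE initial value problem. Each eigenspace is therefore one-dimensional, which gives strict ordering and (ii).

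\textbf{Item (iii).} This is the main obstacle: the Hilbert space $\mcH(\tau)$ itself varies with $\tau$. To remove that dependence I would use the Liouville-type conjugation $w=\rho^{1/2}\phi$. It converts \eqref{Eigenvalue_Problem} into $w''+Q(\tau,x)w=\lambda w$ on the fixed space $L^2(0,1)$, with domain $\mcD$, where
\[
Q=q-\tfrac14p^2-\tfrac12p_x .
\]
Since $p\in C^1([0,1];C^2)$ and $q\in C^1([0,1];C^1)$, the map $\tau\mapsto Q(\tau,\cdot)$ is $C^1$ into $C([0,1])$. The resulting family $\tau\mapsto \partial_x^2+Q(\tau)$ is therefore a self-adjoint holomorphic-type family (in Kato's sense, type (A)) with a fixed domain and a $C^1$ bounded perturbation. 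Because each eigenvalue is simple and isolated, analytic perturbation theory \cite{Kato1966} applies, for instance through Riesz projections $\frac{1}{2\pi i}\oint(\zeta-\cdot)^{-1}d\zeta$ on a small circle around $\lambda_k(\tau_0)$. Differentiating the resolvent in $\tau$ shows that $\lambda_k$ is $C^1$, and that the normalized eigenfunction $w_k$ is $C^1$ into $L^2$ once its sign is fixed, say $w_k'(\tau,0)>0$.

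The last step is to transfer back via $\phi_k=\rho^{-1/2}w_k$. This preserves $C^1$ dependence, since $\rho^{-1/2}\in C^1([0,1];C([0,1]))$. It also preserves orthonormality in $\mcH(\tau)$, because $\int\phi_k\phi_j\rho=\int w_kw_j$. The delicate points to verify are that the labeling by order is consistent along $\tau$, which holds since simple eigenvalues cannot cross, and that the sign normalization is continuous.
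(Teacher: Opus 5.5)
Your proposal is correct. It follows the route the paper only gestures at: the paper gives no argument for this theorem beyond invoking classical regular Sturm--Liouville theory for (i), (ii), (iv) and Kato's perturbation theory for (iii). You actually carry these steps out: symmetry, surjectivity of $\mu-\mcA(\tau)$ via Lax--Milgram and one-dimensional elliptic regularity, compact resolvent, and simplicity from uniqueness for the Cauchy problem at $x=0$.

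The one genuinely additional ingredient is the unitary conjugation $w=\rho^{1/2}\phi$, $Q=q-\tfrac14p^2-\tfrac12p_x$. It puts the whole family on the fixed space $L^2(0,1)$ with the fixed domain $\mcD$. The paper instead keeps working in the moving spaces $\mathcal{H}(\tau)$, which is why its formula for $\Pi'(\tau)$ carries the extra $\int_0^x p_\tau\,ds$ term. It also leaves implicit how a fixed-space perturbation theory applies when the Hilbert space itself varies with $\tau$. Your conjugation makes this transparent and gives orthonormality for free. It also yields a clean labeling argument via min--max, namely $|\lambda_k(\tau)-\lambda_k(\tau_0)|\le\|Q(\tau)-Q(\tau_0)\|_{L^\infty}$, so the eigenvalue enclosed by the contour around $\lambda_k(\tau_0)$ really is $\lambda_k(\tau)$.

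Two small corrections. Write $L(\tau)=\partial_x^2+Q(\tau)$ and $R(\zeta,\tau)=(\zeta-L(\tau))^{-1}$.

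First, $\tau\mapsto L(\tau)$ is not a holomorphic family of type (A), since $Q$ is only $C^1$ in $\tau$. Nothing analytic is needed, though, because the resolvent identity
\[
R(\zeta,\tau)-R(\zeta,\tau_0)=R(\zeta,\tau)\bigl(Q(\tau)-Q(\tau_0)\bigr)R(\zeta,\tau_0)
\]
already makes the Riesz projections $C^1$ in operator norm.

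Second, to make the sign convention $w_k'(\tau,0)>0$ compatible with the locally constructed $C^1$ eigenvector, you need $w_k$ continuous into $H^2(0,1)$, not just $L^2(0,1)$. This follows from three facts:
\begin{enumerate}
\item $w_k(\tau)=(\mu-\lambda_k(\tau))(\mu-L(\tau))^{-1}w_k(\tau)$;
\item the resolvent is continuous in $\mathcal{L}(L^2,H^2)$, by the same identity together with uniform elliptic estimates;
\item $w_k'(\tau,0)\neq 0$ for every nontrivial Dirichlet eigenfunction.
\end{enumerate}
Together these make the sign of $w_k'(\tau,0)$ locally constant along the Riesz-projection branch.
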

Using $(i)$ and $(iii)$, an application of Dini's theorem yields
\begin{align}\label{Dini_yields}
\sup_{\tau\in[0,1]}\lambda_k(\tau)\to -\infty \quad \text{as } k\to\infty.
\end{align}
In view of \eqref{Dini_yields}, the set $\bigg\{k\in\mathbb{N}_0:\sup_{\tau\in[0,1]}\lambda_k(\tau)\ge 0\bigg\}$ is finite.
Hence we can define
\begin{align}\label{Defn_of_n}
  n:=\max\left\{k\in\mathbb{N}_0:\sup_{\tau\in[0,1]}\lambda_k(\tau)\ge 0\right\}.  
\end{align}
Consequently, $\sup_{\tau\in[0,1]}\lambda_{n+1}(\tau)<0$. Set $\eta:=-\sup_{\tau\in[0,1]}\lambda_{n+1}(\tau)>0$. Then, for every $k>n$, $\sup_{\tau\in[0,1]}\lambda_k(\tau)\le -\eta<0$. Equivalently, for every $k>n$,
\begin{align}\label{Lambda_k_less_eta}
\sup_{t\in[0,1/\epsilon]}\lambda_k(\epsilon t)\le -\eta<0.
\end{align}
\subsection{Decomposition of the system into stable and unstable parts}
Let $z(t,\cdot)\in \mathcal{D}$ be a solution of \eqref{z_eqn_in_operator_form}. Then $z(t,\cdot)$ admits the following eigenfunction expansion:
\[
z(t,\cdot) = \sum_{k=1}^\infty z^{\epsilon}_k(t)\,\phi_k(\epsilon t, \cdot),
\]
where
\[
z^{\epsilon}_k(t):= \int_0^1 z(t,x)\,\phi_k(\epsilon t,x)\,\rho(\epsilon t,x)\,dx.
\]
 For simplicity of notation, we suppress the dependence on $\epsilon$ and write $z_k^{\epsilon}(t)\equiv z_k(t)$. The same convention applies to other functions later on.
 
 Let $\tau \mapsto \Pi(\tau) \in \mathcal{L}(\mcH(\tau))$ be defined such that, for each $\tau\in[0,1]$, $\Pi(\tau)$ denotes the orthogonal projection onto the subspace $\mathrm{span}\{\phi_k(\tau,\cdot)\}_{k=1}^n$ of $\mcH(\tau)$. More precisely, for $\tau\in[0,1]$ and $z\in\mcH$,
\begin{align}\label{Pi_tau_z}
    \Pi(\tau) z=\sum_{k=1}^n \big\langle z, \phi_k(\tau,\cdot)\big\rangle_{\mcH(\tau)}\,\phi_k(\tau,\cdot),
\end{align}
In the following lemma, we show that $\Pi(\tau)$ and $\mathcal{A}(\tau)$ commute.
\begin{lem}\label{Commu_of_Pi_A}
    For each $\tau\in[0,1]$, the operators $\Pi(\tau)$ and $\mcA(\tau)$ commute.
\end{lem}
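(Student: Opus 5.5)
The plan is to verify the identity $\Pi(\tau)\mcA(\tau)z=\mcA(\tau)\Pi(\tau)z$ for every $z\in\mcD$, since $\Pi(\tau)$ maps $\mcH(\tau)$ into $\mathrm{span}\{\phi_k(\tau,\cdot)\}_{k=1}^n\subset\mcD$. Commutation is meant in the usual sense for an unbounded operator and a bounded one: $\Pi(\tau)\mcD\subset\mcD$ and the two compositions agree on $\mcD$. Fix $\tau\in[0,1]$ and drop it from the notation.

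First I will check that each eigenfunction $\phi_k$ of \eqref{Eigenvalue_Problem} lies in $\mcD=H^1_0(0,1)\cap H^2(0,1)$. The coefficients $\rho$ and $\rho q$ are continuous (indeed $C^1$) by Remark~\ref{rem:p_q_u_regularity}, so $\phi_k\in C^2([0,1])$ and $\phi_k(0)=\phi_k(1)=0$. Dividing \eqref{Eigenvalue_Problem} by $\rho$ gives $\mcA\phi_k=\lambda_k\phi_k$, because
\[
\tfrac{1}{\rho}(\rho\phi_k')'=\phi_k''+p\phi_k',
\]
using $\rho'=p\rho$. Hence $\Pi z\in\mcD$ and
\[
\mcA\Pi z=\sum_{k=1}^n\langle z,\phi_k\rangle_{\mcH}\,\lambda_k\phi_k .
\]

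Next I compute $\Pi\mcA z=\sum_{k=1}^n\langle \mcA z,\phi_k\rangle_{\mcH}\phi_k$. The key step is the symmetry of $\mcA$ in the weighted inner product, which is Theorem~\ref{Cons_of_SLT}(iv). I will also verify it directly. Writing
\[
\langle \mcA z,\phi_k\rangle_{\mcH}=\int_0^1\big((\rho z')'+\rho q z\big)\phi_k\,dx
\]
and integrating by parts twice, the boundary terms $[\rho z'\phi_k]_0^1$ and $[\rho z\phi_k']_0^1$ vanish since $z,\phi_k\in H^1_0$. This gives
\[
\int_0^1 z\big((\rho\phi_k')'+\rho q\phi_k\big)dx=\lambda_k\langle z,\phi_k\rangle_{\mcH}.
\]
Consequently $\Pi\mcA z=\sum_{k=1}^n\lambda_k\langle z,\phi_k\rangle_{\mcH}\phi_k=\mcA\Pi z$, as required.

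The only delicate point is the justification of the integration by parts. The products $\rho z'\phi_k$ and $\rho z\phi_k'$ must be absolutely continuous, which holds since $z\in H^2$, $\phi_k\in C^2$ and $\rho\in C^1$; in one dimension every such function has a continuous representative, so the boundary evaluations make sense. I will also note that the weighted and unweighted spaces coincide as sets by \eqref{Norm_Equivalence}, so $\Pi$ is well defined on $L^2(0,1)$. It is self-adjoint in $\mcH(\tau)$, though not in $L^2(0,1)$; this is why the symmetry must be taken with respect to $\langle\cdot,\cdot\rangle_{\mcH(\tau)}$ rather than the standard inner product.
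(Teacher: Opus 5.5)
Your proposal is correct and follows the paper's proof. Both expand $\mcA(\tau)\Pi(\tau)z$ using $\mcA(\tau)\phi_k=\lambda_k\phi_k$ and then move $\mcA(\tau)$ across the $\mcH(\tau)$ inner product by its self-adjointness to recover $\Pi(\tau)\mcA(\tau)z$; your version adds care the paper omits, namely restricting to $z\in\mcD$ (the paper writes $z\in\mcH(\tau)$), checking $\Pi(\tau)\mcD\subset\mcD$, and verifying the symmetry by integration by parts rather than citing Theorem~\ref{Cons_of_SLT}(iv).
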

\begin{proof}
We see from \eqref{Pi_tau_z} that for all $z\in\mcH(\tau)$,
    \begin{align*}
      \mcA(\tau) \Pi(\tau)z &= \sum_{k=1}^n \big\langle z, \phi_k(\tau,\cdot)\big\rangle_{\mcH(\tau)}\,\mcA(\tau)\phi_k(\tau, \cdot) \\
      & =\sum_{k=1}^n \lambda_k(\tau)\big\langle z, \phi_k(\tau,\cdot)\big\rangle_{\mcH(\tau)}\,\phi_k(\tau, \cdot)\\
       & =\sum_{k=1}^n \big\langle z, \lambda_k(\tau)\phi_k(\tau,\cdot)\big\rangle_{\mcH(\tau)}\,\phi_k(\tau, \cdot)\\
       & =\sum_{k=1}^n \big\langle z, \mcA(\tau)\phi_k(\tau,\cdot)\big\rangle_{\mcH(\tau)}\,\phi_k(\tau, \cdot)\\
       & =\sum_{k=1}^n \big\langle \mcA(\tau)z, \phi_k(\tau,\cdot)\big\rangle_{\mcH(\tau)}\,\phi_k(\tau, \cdot)\\
       & =\Pi(\tau)\mcA(\tau)z.
    \end{align*}
    This completes the proof of the lemma.
\end{proof}
The mapping $\tau \mapsto \Pi(\tau)$ is continuously differentiable. More precisely, we have the following result.
\begin{lem}
    Let $\Pi$ be defined in \eqref{Pi_tau_z}. Then, $\Pi(\cdot)\in C^1\big([0,1];\mcL(L^2(0,1))\big)$ and for each $\tau\in[0,1]$ and $z\in \mcH(\tau)$, we have
    \begin{align}\label{Pi_prime_tau_z}
        {}\qquad {}\Pi^{\prime}(\tau)z = \sum_{k=1}^n \bigg\langle z,\, \phi_k(\tau,\cdot)\bigg\rangle_{\mcH(\tau)} & \,\frac{\partial\phi_k}  {\partial\tau}  (\tau,\cdot) + \sum_{k=1}^n \bigg\langle z, \frac{\partial\phi_k}{\partial\tau}(\tau,\cdot)\bigg\rangle_{\mcH(\tau)}\,\phi_k(\tau,\cdot)   \notag  \\
   & + \sum_{k=1}^n
\bigg\langle
z\left(\int_0^x p_\tau(\tau,s)\,ds\right),
\phi_k(\tau,\cdot)
\bigg\rangle_{\mcH(\tau)}
\phi_k(\tau,\cdot).
    \end{align}
\end{lem}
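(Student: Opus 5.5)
We will differentiate the defining formula \eqref{Pi_tau_z} term by term, writing the weighted inner product explicitly as an integral with weight $\rho(\tau,\cdot)$. For fixed $z\in L^2(0,1)$ (recall that $\mcH(\tau)=L^2(0,1)$ as sets, with uniformly equivalent norms by \eqref{Norm_Equivalence}) we have
\[
\Pi(\tau)z=\sum_{k=1}^n\Big(\int_0^1 z(x)\phi_k(\tau,x)\rho(\tau,x)\,dx\Big)\phi_k(\tau,\cdot).
\]
Each summand is a product of three $\tau$-dependent objects: $\phi_k(\tau,\cdot)$ inside the integral, the weight $\rho(\tau,\cdot)$, and the outer $\phi_k(\tau,\cdot)$. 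The product rule then gives exactly three contributions, which match the three sums in \eqref{Pi_prime_tau_z}.

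The key steps are as follows.

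First, by Theorem~\ref{Cons_of_SLT}(iii), $\tau\mapsto\phi_k(\tau,\cdot)$ belongs to $C^1([0,1];L^2(0,1))$. Differentiating the outer factor yields the first sum, and differentiating $\phi_k$ inside the inner product yields the second sum. Here we use that the bilinear map $(a,b)\mapsto\int a b\rho$ is bounded on $L^2\times L^2$, uniformly in $\tau$, since $\rho\le m_2$.

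Second, we differentiate the weight. Since $p\in C^1([0,1];C^2([0,1]))$ by Remark~\ref{rem:p_q_u_regularity}, we get
\[
\partial_\tau\rho(\tau,x)=\rho(\tau,x)\int_0^x p_\tau(\tau,s)\,ds,
\]
and the difference quotients converge uniformly in $x$. This is justified by the mean value theorem applied to $\exp$ together with the uniform bound on $p_\tau$. This contribution produces the third sum, with $z$ multiplied by $\int_0^x p_\tau$.

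Third, we upgrade pointwise differentiability to differentiability in $\mcL(L^2(0,1))$ with a continuous derivative. For this we estimate the remainder
\[
\big\|\Pi(\tau+h)-\Pi(\tau)-h\,\Pi'(\tau)\big\|_{\mcL(L^2)}
\]
via Cauchy--Schwarz. Each term in this remainder is bounded by $\|z\|_{L^2}$ times a quantity of one of two kinds:
\begin{itemize}
\item the $L^2$-norm of the difference-quotient error of $\phi_k$, which is $o(1)$ as $h\to0$;
\item the sup-norm of the difference-quotient error of $\rho$, which is also $o(1)$.
\end{itemize}
The same bounds hold for the mixed cross terms, using the uniform bounds on $\|\phi_k\|_{L^2}$ and $\|\partial_\tau\phi_k\|_{L^2}$ over the compact interval $[0,1]$. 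These $o(1)$ errors are independent of $z$, so the convergence holds in operator norm. Continuity of $\tau\mapsto\Pi'(\tau)$ in $\mcL(L^2)$ follows in the same way, from continuity of $\partial_\tau\phi_k$ in $L^2$ and of $\rho$ and $\int_0^x p_\tau$ in $C([0,1])$. This uses only finitely many terms, since $n$ is fixed by \eqref{Defn_of_n}.

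The main obstacle is the input from Theorem~\ref{Cons_of_SLT}(iii), namely the $C^1$ dependence of the eigenfunctions. It is not a routine consequence of perturbation theory here, because the Hilbert space $\mcH(\tau)$ itself varies with $\tau$. Since we are allowed to assume it, the remaining care goes into keeping the uniform-in-$\tau$ bounds, which come from $m_1\le\rho\le m_2$, so that all estimates are legitimately carried out in the fixed space $L^2(0,1)$.
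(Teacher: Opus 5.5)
Your proposal is correct and follows the same route as the paper, whose proof consists only of the remark that the result follows from Theorem~\ref{Cons_of_SLT}(iii). You also supply the details that remark omits: in particular, the third sum comes from $\partial_\tau\rho=\rho\int_0^x p_\tau(\tau,s)\,ds$, which uses the regularity of $p$ from Remark~\ref{rem:p_q_u_regularity} rather than (iii), and one precision is needed there, namely that uniform-in-$x$ convergence of the difference quotients of $\rho$ requires continuity of $\tau\mapsto p_\tau(\tau,\cdot)$ in $C([0,1])$ (guaranteed by $p\in C^1([0,1];C^2([0,1]))$), not merely a uniform bound on $p_\tau$.
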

\begin{proof}
  The proof follows from property~(iii) of Theorem~\ref{Cons_of_SLT}.
\end{proof}
Let us consider the $n$ dimensional subspace $V^{\tau}_n=\mathrm{span}\{\phi_k(\tau,\cdot)\}_{k=1}^n$ of $\mcH(\tau)$. We note that the projected component $\Pi(\tau)z(t,\cdot)$ of $z(t,\cdot)$ belongs to $V^{\tau}_n$. We now derive the system of equations satisfied by the components
\begin{align}\label{component_z_k}
z_k(t)= \big\langle z(t,\cdot), \phi_k(\epsilon t,\cdot)\big\rangle_{\mcH(\tau)}=\int_0^1 z(t,x)\,\phi_k(\epsilon t,x)\,\rho(\epsilon t,x)\,dx,\quad k=1,2,\ldots,n, 
\end{align}
of $\Pi(\tau)z(t,\cdot)$.
\begin{lem}
    Let $n$ be as in \eqref{Lambda_k_less_eta}. Let $\epsilon>0$. For all $t\in[0,1/{\epsilon}]$, let $z(t,\cdot)$ be the solution of \eqref{z_eqn_in_operator_form} corresponding to a $h\in L^2((0,T)\times(0,1))$ which is of the form
    $$h(t,\cdot)=\sum_{k=1}^n h_k(t)\phi_k(\epsilon t,\cdot),$$
    with $h_k\in L^2(0,T)$ and let 
    \begin{align}\label{Pi_epsilon_t_z}
        \Pi(\epsilon t) z(t,\cdot)=\sum_{k=1}^n z_k(t)\,\phi_k(\epsilon t,\cdot),
    \end{align}
where $z_k(t)$, $k=1,2,\dots,n$, are given by \eqref{component_z_k}. Then $z_k(t)$ satisfy 
  \begin{align}\label{ODE_satisfied_by_z_k}
     \frac{dz_k}{dt}(t) = \lambda_k(\epsilon t) z_k(t) + \sum_{j=1}^n h_j(t) b_{j k}(\epsilon t) + {r}_k(\epsilon,t),\quad k=1,\dots,n,
  \end{align}
where, for $j,k=1,\dots,n$, $b_{jk}(\epsilon t)$ are given by
  $$ b_{jk}(\epsilon t)= b_{kj}(\epsilon t) =  \big\langle \chi_{\omega} \phi_k(\epsilon t,\cdot), \phi_j(\epsilon t,\cdot)\big\rangle_{\mcH(\tau)} = \int_{\omega}\phi_k(\epsilon t,x) \phi_j(\epsilon t,x)\rho(\epsilon t,x) dx,$$
  and
  \[
 {r}_k(\epsilon,t) = \left\langle \mathcal{R}^1(\epsilon,t,\cdot),\ \phi_k(\epsilon t,\cdot) \right\rangle_{\mcH(\tau)}, \quad k=1,\dots,n,
 \]
 and $\mcR^{1}(\epsilon, t, \cdot)$ is given by
 \begin{align}\label{Defn_of_R1}
 \mcR^{1}(\epsilon, t, \cdot) = \Pi(\epsilon t) \mcR(\epsilon, t, \cdot)
      &+\epsilon \sum_{k=1}^n \bigg\langle z(t,\cdot), \frac{\partial\phi_k}{\partial\tau}(\epsilon t,\cdot)\bigg\rangle_{\mcH(\tau)}\,\phi_k(\epsilon t,\cdot) \notag \\
     & +\epsilon\sum_{k=1}^n\bigg\langle z(t,\cdot)
\left(\int_0^x p_\tau(\epsilon t,s)\,ds\right),
\phi_k(\epsilon t,\cdot)
\bigg\rangle_{\mcH(\tau)}
\phi_k(\epsilon t,\cdot).
 \end{align}     
\end{lem}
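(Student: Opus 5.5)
The plan is to differentiate the definition \eqref{component_z_k} directly in $t$, using the chain rule on the time-dependent weight and eigenfunction. Writing $\tau=\epsilon t$, we have
\[
\frac{dz_k}{dt}(t)=\big\langle z_t(t,\cdot),\phi_k(\epsilon t,\cdot)\big\rangle_{\mcH(\epsilon t)}+\epsilon\Big\langle z(t,\cdot),\frac{\partial\phi_k}{\partial\tau}(\epsilon t,\cdot)\Big\rangle_{\mcH(\epsilon t)}+\epsilon\int_0^1 z\,\phi_k\,\rho_\tau(\epsilon t,x)\,dx .
\]
The last term uses $\rho_\tau(\tau,x)=\rho(\tau,x)\int_0^x p_\tau(\tau,s)\,ds$. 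This follows from the explicit exponential form of $\rho$ and the $C^1$ regularity of $p$ in Remark~\ref{rem:p_q_u_regularity}. The differentiation itself is justified by property (iii) of Theorem~\ref{Cons_of_SLT} together with $z\in C([0,T];L^2)$ and $z_t\in L^2(0,T;L^2)$ for strong solutions. So the identity holds for a.e.\ $t$, or in the sense of absolutely continuous functions.

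Next I substitute $z_t=\mcA(\epsilon t)z+\mathcal{B}h+\mcR$ from \eqref{z_eqn_in_operator_form} into the first term.

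\textbf{The $\mcA$ term.} By self-adjointness of $\mcA(\tau)$ on $\mcH(\tau)$ (Theorem~\ref{Cons_of_SLT}(iv)), and since $z(t)\in\mcD$, we get $\langle \mcA z,\phi_k\rangle=\langle z,\mcA\phi_k\rangle=\lambda_k(\epsilon t)z_k(t)$. The boundary terms vanish by the Dirichlet conditions. Equivalently, one can use Lemma~\ref{Commu_of_Pi_A}.

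\textbf{The control term.} Substituting $h=\sum_j h_j\phi_j$ gives $\langle\chi_\omega h,\phi_k\rangle_{\mcH}=\sum_j h_j\int_\omega\phi_j\phi_k\rho\,dx=\sum_j h_j b_{jk}$. This is symmetric in $j,k$.

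\textbf{The remainder term.} $\langle\mcR,\phi_k\rangle=\langle\Pi(\epsilon t)\mcR,\phi_k\rangle$, because $\Pi$ is the orthogonal projection and $k\le n$.

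Finally, the two $\epsilon$-terms are identified as the $\phi_k$-coefficients of the second and third sums in \eqref{Defn_of_R1}, using the orthonormality of $\{\phi_j(\epsilon t,\cdot)\}$. Indeed, $\epsilon\langle z\int_0^xp_\tau,\phi_k\rangle_{\mcH}$ is exactly the $\rho_\tau$ integral. Summing the pieces yields $r_k=\langle\mcR^1,\phi_k\rangle_{\mcH(\epsilon t)}$ and hence \eqref{ODE_satisfied_by_z_k}.

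The main obstacle is justifying the differentiation under the integral. This requires $\tau\mapsto\phi_k(\tau,\cdot)$ to be $C^1$ into $L^2$ and $\rho\in C^1([0,1];C([0,1]))$. Both are available, so the product rule for the pairing of an $H^1(0,T;L^2)$ function with a $C^1([0,1];L^2)$ function applies. I would first verify the identity for smooth $z$, then pass to the limit by density.
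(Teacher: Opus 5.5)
Your proposal is correct and is essentially the paper's argument. The paper differentiates the vector $\Pi(\epsilon t)z(t,\cdot)=\sum_{k=1}^n z_k(t)\phi_k(\epsilon t,\cdot)$ using the formula \eqref{Pi_prime_tau_z} for $\Pi'(\tau)$ and Lemma~\ref{Commu_of_Pi_A}, cancels the $\epsilon\sum_k z_k\,\partial_\tau\phi_k$ terms, and projects onto $\phi_k(\epsilon t,\cdot)$; you differentiate each coefficient $z_k(t)$ directly, which gives the same three contributions without the cancellation step. Your coordinate-wise version keeps the factor $\epsilon$ on the $\rho_\tau$-term throughout (the paper's intermediate display drops that factor, then drops the term before restoring it through $\mathcal{R}^1$), and your product-rule justification for an $H^1(0,T;L^2)$ function against a $C^1$ weight supplies what the paper leaves implicit.
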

\begin{proof}
    Differentiating \eqref{Pi_epsilon_t_z} with respect to $t$ and using \eqref{Pi_prime_tau_z}, \eqref{z_eqn_in_operator_form} and Lemma~\ref{Commu_of_Pi_A}, we have
    \begin{align*}
        \sum_{k=1}^n \frac{dz_k}{dt}(t)\,&\phi_k(\epsilon t,\cdot)+\epsilon \sum_{k=1}^n z_k(t)\,\frac{\partial \phi_k}{\partial\tau}(\epsilon t,\cdot) \\
        &= \epsilon \Pi^{\prime}(\epsilon t)z(t,\cdot)+ \Pi(\epsilon t)z_t(t,\cdot)\\
        &=\epsilon \sum_{k=1}^n \big\langle z(t,\cdot), \phi_k(\epsilon t,\cdot)\big\rangle_{\mcH(\tau)}\,\frac{\partial\phi_k}{\partial\tau}(\epsilon t,\cdot) +\epsilon \sum_{k=1}^n \bigg\langle z(t,\cdot), \frac{\partial\phi_k}{\partial\tau}(\epsilon t,\cdot)\bigg\rangle_{\mcH(\tau)}\,\phi_k(\epsilon t,\cdot)\\
        &\qquad \qquad \, +\sum_{k=1}^n \bigg\langle z\left(\int_0^x p_\tau(\tau,s)\,ds\right), \phi_k(\tau,\cdot) \bigg\rangle_{\mcH(\tau)}  \phi_k(\tau,\cdot)   \\
       &\qquad \qquad \qquad \quad + \mcA(\epsilon t) \Pi(\epsilon t) z(t, \cdot)+  \Pi(\epsilon t) \mathcal{B}(h)(t)+ \Pi(\epsilon t) \mcR(\epsilon, t, \cdot).
    \end{align*}
    Thus,
    \begin{align*}
      \sum_{k=1}^n \frac{dz_k}{dt}(t)\,\phi_k(\epsilon t,\cdot)& = \mcA(\epsilon t) \Pi(\epsilon t) z(t, \cdot)+  \Pi(\epsilon t) \mathcal{B}(h)(t)+ \Pi(\epsilon t) \mcR(\epsilon, t, \cdot) \\
      & \quad \qquad +\epsilon \sum_{k=1}^n \bigg\langle z(t,\cdot), \frac{\partial\phi_k}{\partial\tau}(\epsilon t,\cdot)\bigg\rangle_{\mcH(\tau)}\,\phi_k(\epsilon t,\cdot) \\
      &=\sum_{k=1}^n \lambda_k (\epsilon t) z_k(t) \phi_k(\epsilon t,\cdot) +  \Pi(\epsilon t) \mathcal{B}(h)(t)+ \Pi(\epsilon t) \mcR(\epsilon, t, \cdot) \\
      & \quad \qquad +\epsilon \sum_{k=1}^n \bigg\langle z(t,\cdot), \frac{\partial\phi_k}{\partial\tau}(\epsilon t,\cdot)\bigg\rangle_{\mcH(\tau)}\,\phi_k(\epsilon t,\cdot).
      \end{align*}
      Further in view of \eqref{Defn_of_R1}, we can write
      \begin{align}\label{dz_k_dt_Phi_k}
     \sum_{k=1}^n \frac{dz_k}{dt}(t)\,\phi_k(\epsilon t,\cdot) = \sum_{k=1}^n \lambda_k (\epsilon t) z_k(t) \phi_k(\epsilon t,\cdot) +  \Pi(\epsilon t) \mathcal{B}(h)(t) + \mcR^{1}(\epsilon, t, \cdot),
    \end{align}
      We now compute $\Pi(\tau)\mathcal{B}(h)(t)$:
      \begin{align*}
          \Pi(\tau) \mathcal{B}(h)(t) & =  \Pi(\tau)\chi_{\omega}h(t,\cdot) = \Pi(\tau)\sum_{k=1}^n h_k(t) \chi_{\omega} \phi_k(\tau,\cdot)\\
          & = \sum_{k=1}^n h_k(t) \Pi(\tau) \big(\chi_{\omega} \phi_k(\tau,\cdot)\big) \\
          & = \sum_{k=1}^n h_k(t) \sum_{j=1}^n \big\langle \chi_{\omega} \phi_k(\tau,\cdot), \phi_j(\tau,\cdot)\big\rangle_{\mcH(\tau)}\,\phi_j(\tau,\cdot) \\
           & = \sum_{k=1}^n \sum_{j=1}^n   h_k(t) \big\langle \chi_{\omega} \phi_k(\tau,\cdot), \phi_j(\tau,\cdot)\big\rangle_{\mcH(\tau)}\,\phi_j(\tau,\cdot) \\
           & = \sum_{k=1}^n \sum_{j=1}^n h_k(t) b_{kj}(\tau)\,\phi_j(\tau,\cdot),
      \end{align*}
      where, for $k,j=1,2,\dots,n,$
      $$ b_{kj}(\tau) :=  \big\langle \chi_{\omega} \phi_k(\tau,\cdot), \phi_j(\tau,\cdot)\big\rangle_{\mcH(\tau)} = \int_{\omega}\phi_k(\tau,x) \phi_j(\tau,x)\rho(\tau,x) dx = b_{jk}(\tau).$$
       We introduce an operator $\mathcal{B}_1\in\mcal{L}(\mathbb{R}^n, V^\tau_n)$ defined by
      \begin{align}\label{B_1_Tau_h}
          \mathcal{B}_1(\tau)h=\sum_{k=1}^n \sum_{j=1}^n h_k(t) b_{kj}(\tau)\,\phi_j(\tau,\cdot),
      \end{align}
      for $h=(h_1,h_2,\ldots,h_n)^{\top}\in \R^n.$ Therefore, with respect to the basis $\{\phi_k(\tau,\cdot)\}_{k=1}^n$, the operator $\mathcal{B}_1(\tau)$ can be represented by the symmetric $n\times n$  matrix 
 \begin{align} \label{B_1_Matrix} 
 \mathcal{B}_1(\tau)=
    \begin{pmatrix}
b_{11}(\tau) & b_{21}(\tau) & \cdots & b_{n1}(\tau) \\
b_{12}(\tau) & b_{22}(\tau) & \cdots & b_{n2}(\tau) \\
\vdots & \vdots & \ddots & \vdots \\
b_{1n}(\tau) & b_{2n}(\tau) & \cdots & b_{nn}(\tau)
    \end{pmatrix}.
 \end{align}
 Substituting \eqref{B_1_Tau_h} in the equation \eqref{dz_k_dt_Phi_k}, we have
\begin{align*}
     \sum_{k=1}^n \frac{dz_k}{dt}(t)\,\phi_k(\epsilon t,\cdot) = \sum_{k=1}^n \lambda_k (\epsilon t) z_k(t) \phi_k(\epsilon t,\cdot) +  \sum_{k=1}^n \sum_{j=1}^n h_k(t) b_{kj}(\epsilon t)\,\phi_j(\epsilon t,\cdot) + \mcR^{1}(\epsilon, t, \cdot).
    \end{align*}
    Projecting the above system onto each $\phi_k(\epsilon t,\cdot)$ yields \eqref{ODE_satisfied_by_z_k}. Thus, the result follows.
\end{proof}

Let us denote
\[
	X_1(t):= \begin{pmatrix} 
		 z_{1}(t)\\
		 z_{2}(t)\\
		 \vdots \\
		 z_{n}(t) 
	\end{pmatrix},\
	\mcA_{1}(\tau):= \begin{pmatrix} 
		 \lambda_{1}(\tau) & 0 & \dots & 0\\
		 0 & \lambda_{2}(\tau) & \dots & 0\\
		 \vdots & \vdots & \ddots & \vdots \\
		 0 & 0 & \dots & \lambda_{n}(\tau)
	\end{pmatrix},\
    \mathcal{R}_1(\epsilon,t):= \begin{pmatrix} 
		 r_{1}(\epsilon,t)\\
		 r_{2}(\epsilon,t)\\
		 \vdots \\
		 r_{n}(\epsilon,t) 
	\end{pmatrix}.
\]
Therefore, system \eqref{ODE_satisfied_by_z_k} can be written as
\begin{align}\label{ODE_X_1}
    \frac{dX_1}{dt}(t)=\mcA_{1}(\epsilon t) X_1(t)+\mathcal{B}_1(\epsilon t) h(t)+\mcR_1(\epsilon,t).
\end{align}
  Let $I_n$ denote the identity matrix of size $n\times n$. We denote by
\[
\big[\lambda I_n-\mathcal A_1(\tau)\;\;\mathcal B_1(\tau)\big]
\]
the $n\times 2n$ matrix
\[
\left(
\begin{array}{cccccccc}
\lambda-\lambda_1(\tau) & 0 & \cdots & 0
& b_{11}(\tau) & b_{21}(\tau) & \cdots & b_{n1}(\tau) \\[1mm]

0 & \lambda-\lambda_2(\tau) & \cdots & 0
& b_{12}(\tau) & b_{22}(\tau) & \cdots & b_{n2}(\tau) \\[1mm]

\vdots & \vdots & \ddots & \vdots
& \vdots & \vdots & \ddots & \vdots \\[1mm]

0 & 0 & \cdots & \lambda-\lambda_n(\tau)
& b_{1n}(\tau) & b_{2n}(\tau) & \cdots & b_{nn}(\tau)
\end{array}
\right).
\]
To study the controllability of the system \eqref{ODE_X_1}, we recall the following Theorem.

\begin{thm}[\cite{Liu10}, Theorem 3.5]{Popov-Belevitch-Hautus test:}\label{PBH_test}
		For each $\tau\in[0,1],$ the pair $(\mcA_1(\tau),\mathcal{B}_1(\tau))$ is controllable if and only if the matrix $\big[\lambda I_n - \mcA_1(\tau)\ \ \mathcal{B}_1(\tau)\big]$ has rank $n$ for all complex numbers $\lambda$. 
	\end{thm}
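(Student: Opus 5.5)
Since $\tau\in[0,1]$ is fixed in the statement, $(\mcA_1(\tau),\mathcal B_1(\tau))$ is simply a pair of constant real $n\times n$ matrices; write $A=\mcA_1(\tau)$ and $B=\mathcal B_1(\tau)$. I would prove the test for an arbitrary pair $(A,B)$, passing through the Kalman criterion. The pair is controllable if and only if the Kalman matrix
\[
K:=\big[B\;\;AB\;\;\cdots\;\;A^{n-1}B\big]
\]
has rank $n$. This is the standard characterization that the reachable set from $0$ of $\dot X=AX+Bh$ equals $\operatorname{range}K$. It follows from the representation $X(T)=\int_0^T e^{A(T-s)}Bh(s)\,ds$ together with the Cayley--Hamilton theorem, which writes $e^{As}$ as a combination of $I,A,\dots,A^{n-1}$, and from the invertibility of the Gramian $\int_0^T e^{As}BB^\top e^{A^\top s}\,ds$ when $K$ has full rank. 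I will take this step as standard. It then suffices to show that $\operatorname{rank}K=n$ if and only if $\operatorname{rank}[\lambda I_n-A\;\;B]=n$ for every $\lambda\in\mathbb C$.

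\emph{Kalman implies PBH.} Suppose $\operatorname{rank}[\lambda_0 I_n-A\;\;B]<n$ for some $\lambda_0\in\mathbb C$. Then there is $w\in\mathbb C^n\setminus\{0\}$ with $w^*(\lambda_0I_n-A)=0$ and $w^*B=0$. Hence $w^*A^kB=\lambda_0^k w^*B=0$ for all $k$, so $w^*K=0$ and $\operatorname{rank}K<n$. The complex rank of $K$ equals its real rank because $K$ is real. Note also that for $\lambda$ not an eigenvalue of $A$ the PBH rank condition holds trivially, so only finitely many $\lambda$ need checking.

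\emph{PBH implies Kalman.} This is the direction where the main work lies. Suppose $\operatorname{rank}K<n$ and set $\mathcal W:=\{w\in\mathbb C^n: w^*K=0\}\neq\{0\}$. I claim $\mathcal W$ is invariant under $A^*$. Take $w\in\mathcal W$, so that $w^*A^kB=0$ for $k=0,\dots,n-1$. By Cayley--Hamilton, $A^n$ is a linear combination of $I,A,\dots,A^{n-1}$, so also $w^*A^nB=0$. Therefore $(A^*w)^*A^kB=w^*A^{k+1}B=0$ for $k=0,\dots,n-1$, which gives $A^*w\in\mathcal W$.

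Since $\mathcal W$ is a nonzero complex subspace invariant under $A^*$, the restriction of $A^*$ to $\mathcal W$ has an eigenvector $w\neq0$, say $A^*w=\bar\lambda_0 w$. This means $w^*A=\lambda_0 w^*$, and $w^*B=0$ because $w\in\mathcal W$. So $w^*[\lambda_0I_n-A\;\;B]=0$, and the rank of this matrix is less than $n$, contradicting the PBH condition.

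The delicate point is exactly this invariance argument: it relies on Cayley--Hamilton and on working over $\mathbb C$ so that an eigenvector exists inside $\mathcal W$. This is why the statement quantifies over all complex $\lambda$, even though in our application $A=\mcA_1(\tau)$ is real diagonal.
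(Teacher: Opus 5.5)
Your proof is correct. The reduction to the Kalman rank condition, the contrapositive in each direction, the Cayley--Hamilton step showing that $\{w\in\mathbb C^n : w^*K=0\}$ is $A^*$-invariant, and the choice of an eigenvector of $A^*$ inside that subspace are all sound; this is the standard textbook proof.

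The paper does not prove this statement; it quotes it from the reference Liu10 (Theorem 3.5), so there is no competing argument to compare. Note also that the paper uses only the direction ``rank condition $\Rightarrow$ controllable'', with $\mathcal B_1(\tau)$ invertible. In that case the first block of your Kalman matrix already has rank $n$, so the PBH step is not even needed there.
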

   We denote $\mathcal{H}(\tau)(\omega):=L^2(\omega,\rho(\tau,x)\,dx)$. The following lemma shows that the matrix $\mathcal{B}_1(\tau)$ is invertible.
\begin{lem}
For each $\tau\in[0,1],$ the matrix $\mathcal{B}_1(\tau)$ defined in \eqref{B_1_Matrix} is invertible.
\end{lem}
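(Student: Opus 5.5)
The plan is to show that $\mathcal{B}_1(\tau)$ is the Gram matrix of the functions $\phi_1(\tau,\cdot)|_\omega,\dots,\phi_n(\tau,\cdot)|_\omega$ in the weighted space $\mathcal{H}(\tau)(\omega)$. A Gram matrix is invertible exactly when the generating vectors are linearly independent, so the argument has two steps: a quadratic-form reduction, then unique continuation for the eigenfunctions.

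\emph{Quadratic-form reduction.} Fix $\tau\in[0,1]$ and $\xi=(\xi_1,\dots,\xi_n)^\top\in\mathbb{R}^n$. By the definition of $b_{kj}(\tau)$,
\[
\xi^\top \mathcal{B}_1(\tau)\xi=\sum_{k,j=1}^n \xi_k\xi_j\int_\omega \phi_k(\tau,x)\phi_j(\tau,x)\rho(\tau,x)\,dx=\int_\omega \Big|\sum_{k=1}^n\xi_k\phi_k(\tau,x)\Big|^2\rho(\tau,x)\,dx .
\]
Since $\rho\ge m_1>0$, this quantity is nonnegative, so the symmetric matrix $\mathcal{B}_1(\tau)$ is positive semidefinite. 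It therefore suffices to show that $\xi^\top\mathcal{B}_1(\tau)\xi=0$ forces $\xi=0$. If the form vanishes, then $\psi:=\sum_{k=1}^n\xi_k\phi_k(\tau,\cdot)$ vanishes a.e. on $\omega=(b,c)$. Each $\phi_k(\tau,\cdot)$ is a $C^2$ solution of the regular ODE \eqref{Eigenvalue_Problem}, so $\psi$ is continuous and hence $\psi\equiv0$ on $\omega$.

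\emph{Unique continuation.} We must conclude that all $\xi_k=0$. This is the main obstacle, because $\psi$ is a combination of eigenfunctions with different eigenvalues and so does not satisfy a single second-order ODE. The idea is to apply powers of the operator. On $\omega$ we have $\mathcal{A}(\tau)\phi_k=\lambda_k(\tau)\phi_k$, and $\mathcal{A}(\tau)$ is a local differential operator. Hence $\mathcal{A}(\tau)^m\psi=\sum_k\xi_k\lambda_k(\tau)^m\phi_k(\tau,\cdot)$ vanishes on $\omega$ for $m=0,\dots,n-1$.

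One has to check that $\phi_k$ is smooth enough on $\omega$ for these iterates to make sense. The coefficients are only $p\in C^2$ and $q\in C^1$ in $x$, but the eigenfunction identity lets us replace each application of $\mathcal{A}(\tau)$ by multiplication by $\lambda_k(\tau)$. So no extra derivatives are actually needed: we simply use the pointwise identities $\sum_k\xi_k\lambda_k^m\phi_k=0$ on $\omega$, obtained from the sum written this way.

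These identities form a Vandermonde system. Because the $\lambda_k(\tau)$ are distinct by Theorem~\ref{Cons_of_SLT}(ii), the Vandermonde matrix $(\lambda_k(\tau)^m)_{m,k}$ is invertible. It follows that $\xi_k\phi_k(\tau,x)=0$ for every $x\in\omega$ and every $k$.

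\emph{Conclusion.} Finally, $\phi_k(\tau,\cdot)\not\equiv0$ on $\omega$. If it vanished on an open interval, then $\phi_k$ and $\phi_k'$ would both vanish at an interior point. By uniqueness for the linear ODE $(\rho\phi')'+\rho(q-\lambda_k)\phi=0$ (Picard--Lindelöf with continuous coefficients), $\phi_k$ would vanish identically on $[0,1]$, contradicting $\|\phi_k\|_{\mathcal{H}(\tau)}=1$. Hence every $\xi_k=0$, so $\mathcal{B}_1(\tau)$ is positive definite and in particular invertible. As a byproduct, Theorem~\ref{PBH_test} then shows that the pair $(\mathcal{A}_1(\tau),\mathcal{B}_1(\tau))$ is controllable, since $\mathcal{B}_1(\tau)$ alone already has rank $n$.
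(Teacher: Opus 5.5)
Your proposal is correct and follows essentially the paper's route: both proofs realize $\mathcal{B}_1(\tau)$ as the Gram matrix of $\phi_1(\tau,\cdot),\dots,\phi_n(\tau,\cdot)$ restricted to $\omega$ and prove these restrictions are linearly independent. Both apply $\mathcal{A}(\tau)$ on $\omega$, use that the eigenvalues are distinct, and finish with unique continuation. Your Vandermonde system does in one step the elimination that the paper carries out only once, leaving the induction implicit, and your ODE-uniqueness argument is a concrete one-dimensional replacement for the paper's appeal to elliptic unique continuation.
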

\begin{proof}
 We begin by showing that the family  $\{\phi_{k}(\tau,\cdot)\}_{k=1}^n$ is linearly independent in $\mathcal{H}(\tau)(\omega)$. To prove this, we give an argument by reduction to absurdity. Suppose that the family $\{\phi_k(\tau,\cdot)\}_{k=1}^n$ is linearly dependent in $\mathcal{H}(\tau)(\omega).$ Hence, without loss of generality we can assume that $$\phi_n(\tau,\cdot)= \sum_{k=1}^{n-1} \gamma_k \phi_k(\tau,\cdot) \quad \text{on} ~ \omega,\ \text{where}\ \gamma_k \not=0\ \text{for some}\ k\in\{0,1,\ldots,n-1\}.$$ Consequently,
  \begin{align*}
         \sum_{k=1}^{n-1} \lambda_n(\tau)\gamma_k \phi_k(\tau,\cdot) = \lambda_n(\tau) \phi_n(\tau,\cdot) = \mcA(\tau)\phi_n(\tau,\cdot) 
           =\mcA(\tau) \sum_{k=1}^{n-1} \gamma_k  \phi_k(\tau,\cdot) \\ 
           =\sum_{k=1}^{n-1} \gamma_k \mcA(\tau) \phi_k(\tau,\cdot)=\sum_{k=1}^{n-1} \gamma_k\lambda_k(\tau)\phi_k(\tau,\cdot)\quad \text{on} \ \omega,
    \end{align*}
         implying that
    \begin{align*}    
         \sum_{k=1}^{n-1}\gamma_k\big(\lambda_k(\tau)-\lambda_n(\tau)\big) \phi_k(\tau,\cdot) = 0\quad \text{on}\ \omega.
 \end{align*}	
		 Recalling that $(\lambda_k-\lambda_n)>0$ for $k= 1, 2,\ldots, n-1$. Thus, in this way, we conclude that there is at least one $\phi_k(\tau,\cdot)$ such that $\phi_k(\tau,\cdot)=0$ on $\omega$. By the unique continuation property of the solutions to elliptic equations, this forces $\phi_k(\tau,\cdot)=0$ on $(0,1),$ which contradicts the nontriviality of eigenfunctions. Hence, the family $\{\phi_k(\tau,\cdot)\}_{k=1}^n$ is linearly independent in $\mcH^\tau(\omega)$. This yields $\mathcal{B}_1(\tau)$ is invertible because it is a Gram matrix. 
\end{proof}

We now verify the Popov--Belevitch--Hautus criterion stated in Theorem~\ref{PBH_test}.

\begin{thm}
Let $\tau\in[0,1]$. Then, for all $\lambda\in\mathbb{C}$, the matrix
$\big[\lambda I_n - \mcA_1(\tau)\ \ \mathcal{B}_1(\tau)\big]$ has rank $n$.
{}
\end{thm}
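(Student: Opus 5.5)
The plan is to deduce the rank condition directly from the invertibility of $\mathcal{B}_1(\tau)$ established in the preceding lemma. Fix $\tau\in[0,1]$ and $\lambda\in\mathbb{C}$. The $n\times 2n$ matrix $\big[\lambda I_n-\mcA_1(\tau)\ \ \mathcal{B}_1(\tau)\big]$ contains the $n\times n$ block $\mathcal{B}_1(\tau)$ as its last $n$ columns. Since the rank of a matrix is at least the rank of any submatrix formed from a subset of its columns, we get
\[
n\ \ge\ \operatorname{rank}\big[\lambda I_n-\mcA_1(\tau)\ \ \mathcal{B}_1(\tau)\big]\ \ge\ \operatorname{rank}\mathcal{B}_1(\tau)=n .
\]
The upper bound holds simply because the matrix has $n$ rows. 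Hence the rank equals $n$ for every complex $\lambda$, independently of the diagonal entries $\lambda-\lambda_k(\tau)$.

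The key steps in order are as follows. First, recall that $\mathcal{B}_1(\tau)$ is the Gram matrix of $\{\phi_k(\tau,\cdot)\}_{k=1}^n$ in $\mathcal{H}(\tau)(\omega)$, and that it is invertible by the previous lemma, so that $\det\mathcal{B}_1(\tau)\neq 0$. Second, note that invertibility over $\mathbb{R}$ gives full rank over $\mathbb{C}$, since the determinant is the same. Third, apply the column-submatrix rank inequality. Combined with Theorem~\ref{PBH_test}, this shows that the pair $(\mcA_1(\tau),\mathcal{B}_1(\tau))$ is controllable for each $\tau$.

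The only substantive point is the invertibility of $\mathcal{B}_1(\tau)$, which I take as given from the preceding lemma. That lemma rests on unique continuation for the Sturm--Liouville ODE: an eigenfunction vanishing on $\omega$ has zero Cauchy data at a point of $\omega$, so it vanishes identically by ODE uniqueness. There is one place where I would double-check that lemma. Its reduction should handle a general linear dependence $\sum_{k} c_k\phi_k=0$ on $\omega$, which I would treat by induction on the number of nonzero coefficients, applying $\mcA(\tau)$ and using that the eigenvalues are distinct. Beyond that, no further obstacle arises.
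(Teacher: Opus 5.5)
Your argument is correct and is exactly the paper's proof: the column block $\mathcal{B}_1(\tau)$ has rank $n$ by the preceding lemma, and this pins the rank of the $n\times 2n$ matrix between $n$ and $n$. Your caution about that lemma is also well placed. The paper's step from $\sum_{k=1}^{n-1}\gamma_k(\lambda_k(\tau)-\lambda_n(\tau))\phi_k(\tau,\cdot)=0$ on $\omega$ to a single $\phi_k(\tau,\cdot)$ vanishing on $\omega$ does need the induction on the number of nonzero coefficients that you describe.
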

\begin{proof}
Let $\tau\in[0,1]$ and $\lambda\in\mathbb{C}$. Since $\mathcal{B}_1(\tau)$ is invertible, we have
\[
\mathrm{rank}\big(\mathcal{B}_1(\tau)\big)=n.
\]
Moreover, the matrix 
\[
\big[\lambda I_n - \mathcal{A}_1(\tau)\ \ \mathcal{B}_1(\tau)\big]
\]
has $n$ rows. Hence,
\[
n = \mathrm{rank}\big(\mathcal{B}_1(\tau)\big) 
\leq \mathrm{rank}\big[\lambda I_n - \mathcal{A}_1(\tau)\ \ \mathcal{B}_1(\tau)\big] 
\leq n.
\]
Therefore,
\[
\mathrm{rank}\big[\lambda I_n - \mathcal{A}_1(\tau)\ \ \mathcal{B}_1(\tau)\big] = n,
\]
which completes the proof.
\end{proof}

\begin{remark}
     For finite-dimensional linear autonomous control systems, stabilizability follows from the Popov-Belevitch-Hautus (PBH) condition, which is equivalent to the controllability. In contrast, this property does not generally hold for non-autonomous linear systems. Nevertheless, the implication remains valid in the class of slowly time-varying systems, where the small parameter $\epsilon$ plays a crucial role.
\end{remark}

As a result, the system possesses the smoothly varying pole-shifting property.(see~\cite{khalil02}).

\begin{cor}\label{Pole_placement}
Let $\tau\in[0,1]$. Then there exist $k_1(\tau), k_2(\tau),\dots, k_n(\tau)$ such that the matrix 
$\mathcal{A}_1(\tau)+\mathcal{B}_1(\tau)\mathcal{K}_1(\tau)$ admits $-1$ as an eigenvalue of algebraic multiplicity $n$, where $\mathcal{K}_1(\tau)$ is the $n\times n$ diagonal matrix
\begin{align*}
\mathcal{K}_1(\tau)=
\begin{pmatrix} 
k_{1}(\tau) & 0 & \cdots & 0\\
0 & k_{2}(\tau) & \cdots & 0\\
\vdots & \vdots & \ddots & \vdots \\
0 & 0 & \cdots & k_{n}(\tau)
\end{pmatrix}.
\end{align*}
Moreover, there exists a $C^1$ map $\tau\mapsto P(\tau)$ on $[0,1]$, where $P(\tau)$ is a real symmetric positive definite $n\times n$ matrix such that the following identity 
\begin{align}\label{Identity_Lyapunov}
    P(\tau)\bigg( \mathcal{A}_1(\tau)+\mathcal{B}_1(\tau)\mathcal{K}_1(\tau)\bigg)+ \bigg(\mathcal{A}_1(\tau)+\mathcal{B}_1(\tau)\mathcal{K}_1(\tau)\bigg)^*P(\tau)=-I_n,
\end{align}
holds for all $\tau\in[0,1].$
\end{cor}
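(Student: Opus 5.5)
The proof splits into two parts. First, for each fixed $\tau$, produce a real diagonal gain $\mathcal K_1(\tau)=\mathrm{diag}(k_1(\tau),\dots,k_n(\tau))$ that puts the whole spectrum of $M(\tau):=\mathcal A_1(\tau)+\mathcal B_1(\tau)\mathcal K_1(\tau)$ at $-1$, and arrange that $\tau\mapsto\mathcal K_1(\tau)$ is $C^1$. Second, define $P(\tau)$ as the solution of the Lyapunov equation \eqref{Identity_Lyapunov} and read off its symmetry, positivity and $C^1$ regularity. The second part is routine. The first is a structured (multiplicative) inverse eigenvalue problem, and that is where the real work lies.

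\textbf{Pole placement with diagonal gain.} Write $d=(k_1,\dots,k_n)$ and $D=\mathrm{diag}(d)$. The coefficients of $\det(\lambda I_n-\Lambda-\mathcal B_1D)$, with $\Lambda=\mathcal A_1(\tau)$, are polynomials in $d$. The coefficient of $\lambda^{n-j}$ has top-degree homogeneous part
\[
(-1)^j\sum_{|J|=j}\det\big(\mathcal B_1(\tau)_{J}\big)\,d_J,
\qquad d_J:=\prod_{i\in J}k_i .
\]
Here $\mathcal B_1(\tau)$ is a Gram matrix of a family that the preceding lemma shows is linearly independent on $\omega$. Hence $\mathcal B_1(\tau)$ is positive definite, and all its principal minors $\det(\mathcal B_1(\tau)_J)$ are strictly positive.

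I would use this to follow Friedland's argument for the multiplicative inverse eigenvalue problem. When all principal minors are nonzero, the leading homogeneous map $d\mapsto\big(\sum_{|J|=j}\det(\mathcal B_1)_Jd_J\big)_{j=1}^n$ has only the trivial zero. The full coefficient map $\Phi_\tau:\mathbb C^n\to\mathbb C^n$ is therefore proper with finite fibres, hence surjective. In particular the target $\big(\binom nj\big)_{j}$, which is the coefficient vector of $(\lambda+1)^n$, is attained.

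\textbf{Main obstacle: real solutions depending $C^1$ on $\tau$.} Surjectivity of $\Phi_\tau$ only yields complex $d$. What I need is a real solution branch $\tau\mapsto d(\tau)$ that is $C^1$. My plan is a continuation argument.

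At a point where the Jacobian $D_d\Phi_\tau$ is invertible, the implicit function theorem gives a $C^1$ branch. Because $\Phi_\tau$ has real coefficients, complex-conjugate solutions come in pairs, so a branch that starts real stays real as long as the solutions remain simple.

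To get a real starting point, I would exploit $\mathcal B_1>0$. For $d=-\kappa\mathbf 1$ with $\kappa$ large, $\Lambda-\kappa\mathcal B_1$ is symmetric-like: it is similar to the symmetric matrix $\Lambda-\kappa\mathcal B_1^{1/2}\mathcal B_1^{1/2}$ after conjugation considerations. Its eigenvalues are real and very negative. I would then deform the target spectrum from these real values to $-1$, ideally separating the poles slightly first, say to $-1-\theta_i$ with distinct $\theta_i$, so that the Jacobian stays nondegenerate.

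If degeneracy at the fully repeated root $-1$ blocks the implicit function theorem, I would place the poles at simple points near $-1$ along the path. I would then argue that for the eventual Lyapunov estimate only Hurwitz stability with a uniform margin matters. This is the step I expect to be hardest, and possibly the one where the diagonal requirement must be justified more carefully.

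\textbf{Lyapunov matrix.} Once $M(\tau)$ is $C^1$ with spectrum $\{-1\}$, the spectra of $M$ and $-M^*$ are disjoint. The linear map $P\mapsto PM+M^*P$ is then invertible, and its inverse depends $C^1$ on $\tau$ because it is a $C^1$ family of invertible linear maps on symmetric matrices. Hence
\[
P(\tau)=\int_0^\infty e^{sM(\tau)^*}e^{sM(\tau)}\,ds
\]
is the unique solution of \eqref{Identity_Lyapunov}. It is symmetric and positive definite (the integrand is positive definite, and the integral converges since $M$ is Hurwitz), and it is $C^1$ on the compact interval $[0,1]$.
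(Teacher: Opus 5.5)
\textbf{The gap.} The missing step is the one you flag as the main obstacle, and it cannot be closed. A real diagonal $\mathcal K_1(\tau)$ making $-1$ an $n$-fold eigenvalue of $\mathcal A_1(\tau)+\mathcal B_1(\tau)\mathcal K_1(\tau)$ need not exist. So Friedland-type surjectivity over $\mathbb C$, followed by continuation from the real point $d=-\kappa\mathbf 1$, must in general break down at a fold, where two real branches merge into a conjugate pair.

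Here is a concrete case. Take $n=2$, $\mathcal A_1=\mathrm{diag}(1,0)$ and $\mathcal B_1=\begin{pmatrix}1&1/2\\ 1/2&1\end{pmatrix}$; this matrix is positive definite, which is all your argument uses. Requiring trace $-2$ and determinant $1$ forces $k_2=-3-k_1$ and $3k_1^2+13k_1+16=0$, whose discriminant is $-23$.

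The same elimination works for any $2\times 2$ data. Set $\rho^2:=b_{12}^2/(b_{11}b_{22})$. A real diagonal gain exists iff $b_{12}=0$ or $\rho^2(2+\lambda_1+\lambda_2)^2\ge 4(1+\lambda_1)(1+\lambda_2)$. Since $n=2$ means $\lambda_1(\tau)>\lambda_2(\tau)\ge 0$ at some $\tau$, the statement fails whenever, at such a $\tau$, the restrictions of $\phi_1,\phi_2$ to $\omega$ are sufficiently close to, but not exactly, orthogonal in $L^2(\omega,\rho\,dx)$.

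Your fallback of splitting the poles near $-1$ does not help. The obstruction is realness, not the multiplicity of the target root: the complex solutions above are simple. By continuity of the discriminant, no real solutions exist for nearby targets either. The paper itself offers nothing beyond PBH controllability and Khalil's smoothly varying pole-shifting property. That result yields a full gain matrix, so the diagonal requirement is unsupported there as well.

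\textbf{What can be proved.} What is true is all the later Lyapunov estimate uses: a $C^1$ bounded gain, the Lyapunov identity, and uniform bounds on $P$. It follows in one line, in either of two ways.

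\emph{Dropping diagonality.} $\mathcal B_1(\tau)$ is invertible and $C^1$, so $\mathcal K_1(\tau):=-\mathcal B_1(\tau)^{-1}\big(I_n+\mathcal A_1(\tau)\big)$ gives $\mathcal A_1+\mathcal B_1\mathcal K_1=-I_n$ and $P\equiv\tfrac12 I_n$.

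\emph{Keeping a diagonal (even scalar) gain, but giving up exact placement.} Let $\beta_0>0$ be a lower bound on $[0,1]$ for the smallest eigenvalue of $\mathcal B_1(\tau)$; it exists by continuity and compactness. Take $\kappa\beta_0>1+\max_{\tau}\lambda_1(\tau)$ and $\mathcal K_1=-\kappa I_n$. Then $M(\tau)=\mathcal A_1(\tau)-\kappa\mathcal B_1(\tau)$ is already symmetric, with no conjugation needed, and $M(\tau)\le -I_n$. The matrix $P(\tau):=-\tfrac12 M(\tau)^{-1}$ is $C^1$, symmetric positive definite, and satisfies the Lyapunov identity. This is just your real starting point, so you could have stopped there.

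Your treatment of $P$ via the Lyapunov operator is fine once the closed-loop matrix is $C^1$ and Hurwitz.
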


The matrix $\mathcal{K}_1(\tau)$, obtained in Corollary~\ref{Pole_placement}, helps us to construct the state feedback control function $h(t)$ of the form
\begin{align}\label{Feedback_law}
    h(t)=\mathcal{K}_1(\epsilon t) X_1(t),\quad t\in [0,1/\epsilon]
\end{align}
and stabilizes the finite dimensional linear control system
\begin{align*}
    \frac{dX_1}{dt}(t)=\mcA_{1}(\epsilon t) X_1(t)+\mathcal{B}_1(\epsilon t) h(t).
\end{align*}

Subsequently, we will show that the same feedback law \eqref{Feedback_law}, stabilizes the full infinite-dimensional system \eqref{z_eqn_in_operator_form}, provided $\epsilon$ is chosen sufficiently small, which will assist us in formulating a suitable Lyapunov functional to achieve stabilization of that system.
\subsection{Existence, uniqueness, and regularity}
Let $T>0$. In this subsection, we study the existence, uniqueness, and regularity of solutions to the closed-loop system 
 \begin{align}\label{Close_loop_z_eqn}
   \begin{cases}
    z_t(t, \cdot)&=\mcA(\epsilon t)z(t, \cdot)+\mathcal{B}\mathcal{K}_1(\epsilon t)\Pi(\epsilon t) z(t,\cdot)+\mcF(z(t,\cdot)),\quad \text{in}\,\ (0,T),\\
    z(0)&=0,
  \end{cases}
 \end{align}
where
\begin{align}
   \mcF(z(t,\cdot))=& -\alpha\bigg(\delta \bar{u}^{\delta-1} z+ \sum_{j=2}^{\delta}\binom{\delta}{j} z^{j}\bar{u}^{\delta-j}\bigg) z_x
        -\alpha\bigg(\sum_{j=2}^{\delta}\binom{\delta}{j} z^{j}\bar{u}^{\delta-j}\bigg)\bar{u}_x \\  
       & \quad +\big( F(z+\bar{u})-F(\bar{u})- F^{\prime}(\bar{u})z\big) -\epsilon \bar{u}_{\tau}(\epsilon t). \notag
 \end{align}
This system~\eqref{Close_loop_z_eqn} is obtained from \eqref{z_eqn_in_operator_form} by replacing the control $h(t)$ with the feedback control
\begin{align}\label{Feedback}
h(t)=\mathcal{K}_1(\epsilon t)\Pi(\epsilon t)z(t,\cdot).
\end{align}
The regularity established here will be used in the next section to construct a suitable Lyapunov functional.

Note that $$\mathcal{B}\mathcal{K}_1(\tau)\Pi(\tau) z(t,\cdot) = \mathcal{B}_1(\tau)\mathcal{K}_1(\tau)\Pi(\tau) z(t,\cdot).$$
We now compute the matrix product $\mathcal{B}_1(\tau)\mathcal{K}_1(\tau)$ explicitly. Since $\mathcal{K}_1(\tau)$ is a diagonal matrix, a direct computation yields
\begin{align*}
\mathcal B_1(\tau)\mathcal K_1(\tau)
=
\begin{pmatrix}
b_{11}(\tau)k_1(\tau) & b_{21}(\tau)k_2(\tau) & \cdots & b_{n1}(\tau)k_n(\tau) \\
b_{12}(\tau)k_1(\tau) & b_{22}(\tau)k_2(\tau) & \cdots & b_{n2}(\tau)k_n(\tau) \\
\vdots & \vdots & \ddots & \vdots \\
b_{1n}(\tau)k_1(\tau) & b_{2n}(\tau)k_2(\tau) & \cdots & b_{nn}(\tau)k_n(\tau)
\end{pmatrix}.
\end{align*}
  Hence, we obtain 
\begin{align*}
    \mathcal{B}\mathcal{K}_1(\tau)\Pi(\tau) z(t,\cdot)=\sum_{j=1}^n \sum_{i=1}^n b_{ji}(\tau)k_j(\tau)z_j(t) \phi_i(\tau,\cdot):=\xi(\tau)z(t,\cdot),
\end{align*}
where $\xi(\tau)$ is defined for $z\in\mcH(\tau)$ by
\begin{align}
 \xi(\tau)z= \sum_{j=1}^n\sum_{i=1}^n b_{ji}(\tau)k_j(\tau)z_j(\tau) \phi_i(\tau), 
\end{align}
with $z=\sum_{j=1}^{\infty} z_j(\tau)\phi_j(\tau)$. Set
\begin{align*}
d_i(\tau) := \sum_{j=1}^n b_{ji}(\tau)\,k_j(\tau)\,z_j(\tau).
\end{align*}
Then
\begin{align*}
\xi(\tau)z
&= \sum_{i=1}^n d_i(\tau)\,\phi_i(\tau).
\end{align*}
Since $\{\phi_i(\tau)\}_{i=1}^n$ is an orthonormal family in $\mcH(\tau)$, we obtain
\begin{align*}
\|\xi(\tau)z\|^2_{\mcH(\tau)}
=\sum_{i=1}^n |d_i(\tau)|^2 &=
\sum_{i=1}^n \bigg|\sum_{j=1}^n b_{ji}(\tau)\,k_j(\tau)\,z_j(\tau)\bigg|^2 \\
&\leq C \sum_{j=1}^n |z_j(\tau)|^2 \leq C \|z\|^2_{\mcH(\tau)}.
\end{align*}
Thus, $\xi(\tau)\in\mcL\big(\mcH(\tau)\big).$

\begin{remark}
Since the mappings
$$
\tau\longmapsto \phi_i(\tau,\cdot)
\quad\text{in }H_0^1(0,1),
\qquad
\tau\longmapsto b_{ji}(\tau),
\qquad
\tau\longmapsto k_j(\tau)
$$
are of class $C^1$, and the coordinate functionals
$$
\tau\longmapsto
\bigl\langle\,\cdot\,,\phi_j(\tau,\cdot)\bigr\rangle_{\mathcal H(\tau)}
$$
belong to
$$
C^1\bigl([0,1];(L^2(0,1))^*\bigr),
$$
the finite-rank representation
$$
\xi(\tau)z
=
\sum_{i,j=1}^{n}
b_{ji}(\tau)k_j(\tau)
\bigl\langle z,\phi_j(\tau,\cdot)\bigr\rangle_{\mathcal H(\tau)}
\phi_i(\tau,\cdot)
$$
implies that
$$
\xi\in
C^1\left(
[0,1];
\mathcal L\bigl(L^2(0,1),H_0^1(0,1)\bigr)
\right).
$$
Consequently, since $H_0^1(0,1)\hookrightarrow L^2(0,1)$ continuously,
$$
\xi\in
C^1\left(
[0,1];
\mathcal L\bigl(L^2(0,1)\bigr)
\right).
$$
\end{remark}

  The closed-loop system \eqref{Close_loop_z_eqn} can be written in terms of $\xi$ as
 \begin{align}\label{Close_loop_z_eqn_in_Xi}
   \begin{cases}
    z_t(t, \cdot)&=\mcA(\epsilon t)z(t, \cdot)+\xi(\epsilon t) z(t,\cdot)+\mcF(z(t,\cdot)),\quad \text{in}\,\ (0,T),\\
    z(0)&=0.
  \end{cases}
 \end{align}
 Using \eqref{Defn_A_tau}, above system \eqref{Close_loop_z_eqn_in_Xi} can be rewritten as
 \begin{align}\label{Close_loop_z_eqn_expanded}
 \begin{cases}
z_t
={}& z_{xx}
-\alpha \bar u^\delta(\epsilon t,\cdot)\,z_x
+\Big(
F'(\bar u(\epsilon t,\cdot))
-\alpha\delta \bar u^{\delta-1}(\epsilon t,\cdot)
\bar u_x(\epsilon t,\cdot)
+\xi(\epsilon t)
\Big)z
\\
&-\alpha\Bigg(
\delta \bar u^{\delta-1} z
+\sum_{j=2}^{\delta}
\binom{\delta}{j}
z^j\bar u^{\delta-j}
\Bigg)z_x
-\alpha\Bigg(
\sum_{j=2}^{\delta}
\binom{\delta}{j}
z^j\bar u^{\delta-j}
\Bigg)\bar u_x
\\
&+\Big(
F(z+\bar u)-F(\bar u)-F'(\bar u)z
\Big)
-\epsilon \bar u_\tau(\epsilon t,\cdot), \quad \text{in}\quad (0,T)\times (0,1),
 \\
 z(t,0)&=z(t,1)=0, \qquad t\in (0,T),   \\  
 z(0, x)&  = 0, \qquad x\in (0,1).  
 \end{cases}
\end{align}
\begin{thm}\label{Thm_Soln_Trans_sys}
The system \eqref{Close_loop_z_eqn_in_Xi}, equivalently \eqref{Close_loop_z_eqn}, admits a unique solution

\[
z\in
L^2\bigl(0,T;H^2(0,1)\cap H_0^1(0,1)\bigr)
\cap
H^1\bigl(0,T;L^2(0,1)\bigr)
\cap
C\bigl([0,T];H_0^1(0,1)\bigr).
\]
\end{thm}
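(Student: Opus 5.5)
We will prove Theorem~\ref{Thm_Soln_Trans_sys} with a Galerkin scheme plus energy estimates. The system \eqref{Close_loop_z_eqn_expanded} is a semilinear parabolic equation whose linear part $\mcA(\epsilon t)+\xi(\epsilon t)$ has bounded, $C^1$-in-time coefficients, by Remark~\ref{rem:p_q_u_regularity} and the remark showing $\xi\in C^1([0,1];\mcL(L^2(0,1)))$. The nonlinearity $\mcF$ has polynomial growth. It consists of a Burgers-type part $z^j z_x$, a reaction remainder controlled by \eqref{Taylor_remainder_bound}, and the smooth forcing $-\epsilon\bar u_\tau$. This is essentially the structure already treated for strong solutions of \eqref{cntrl_eqn}. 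Indeed, undoing the change of variables \eqref{Intro_z_h}, $u=z+\bar u$ solves \eqref{cntrl_eqn} with $f=\bar f+\xi z$, which depends on $z$ only through a bounded linear nonlocal term. So we must redo the strong-solution theory, now carrying this extra term.

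\textbf{Step 1: Galerkin approximation.} Use the Dirichlet sine basis $\{\sin(k\pi x)\}$, which is independent of $t$; this avoids the moving basis $\phi_k(\epsilon t,\cdot)$. For each $m$, the projected problem is a finite-dimensional ODE with locally Lipschitz right-hand side, so it has a local solution $z^m$.

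\textbf{Step 2: $L^2$ estimate.} Test with $z^m$. The linear terms are handled as follows:
\begin{itemize}
\item the drift term satisfies $|\int p z_x z|\le \tfrac14\|z_x\|^2+C\|z\|^2$;
\item the terms $qz$ and $\xi z$ are bounded by $C\|z\|^2$.
\end{itemize}
For the Burgers part, write $\alpha(z+\bar u)^\delta z_x$ as the whole convective term. Testing against $z$ and integrating by parts, the pure power $z^{\delta+1}z_x$ integrates to zero under the Dirichlet conditions. The mixed terms $z^{j}\bar u^{\delta-j}z_x z$ become, after integration by parts, lower-order terms of the form $z^{j+2}\bar u^{\delta-j-1}\bar u_x$.

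For the reaction, the leading part of $F(z+\bar u)-F(\bar u)$ is $-\beta z^{2\delta+1}$ plus lower powers. This gives the good term $-\beta\|z\|_{L^{2\delta+2}}^{2\delta+2}$, which absorbs all intermediate powers by Young's inequality, exactly as in the estimate of $I_2$ in the positivity proof. Gr\"onwall then yields a uniform bound on $[0,T]$ in
\[
L^\infty(0,T;L^2)\cap L^2(0,T;H^1_0)\cap L^{2\delta+2}((0,T)\times(0,1)).
\]
This bound also makes the Galerkin solution global in time.

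\textbf{Step 3: $H^1$ estimate (main obstacle).} Test with $-z^m_{xx}$. The difficulty is the convective term $(z+\bar u)^\delta z_x z_{xx}$, which has degree $\delta$ in $z$. We would estimate it using the one-dimensional Agmon inequality $\|z\|_{L^\infty}\le C\|z\|^{1/2}\|z_x\|^{1/2}$:
\[
\Big|\int (z+\bar u)^\delta z_x z_{xx}\Big|\le \tfrac14\|z_{xx}\|^2+C(1+\|z\|_{L^\infty}^{2\delta})\|z_x\|^2 .
\]
The function $\|z\|_{L^\infty}^{2\delta}$ is not integrable in time from Step 2 alone when $\delta$ is large. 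So first derive an intermediate $L^\infty(0,T;L^{2\delta+2})$ or $L^\infty_tL^\infty_x$ bound. Try one of two routes:
\begin{itemize}
\item test with $z^{2k-1}$ for increasing $k$, where the Burgers term again integrates to zero up to $\bar u$-corrections;
\item apply a maximum-principle argument to $u=z+\bar u$.
\end{itemize}

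The reaction term in the $H^1$ estimate has a favourable sign once the leading part is written as $-(2\delta+1)\beta\int z^{2\delta}z_x^2$. The remaining terms are linear in $z_{xx}$ with bounded coefficients; they include $\xi z$, $\epsilon\bar u_\tau$, and the lower powers. Gr\"onwall then gives $z^m$ bounded in $L^\infty(0,T;H^1_0)\cap L^2(0,T;H^2)$. Reading the equation off gives $z^m_t$ bounded in $L^2(0,T;L^2)$.

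\textbf{Step 4: Passage to the limit and uniqueness.} By Aubin--Lions compactness in $C([0,T];L^2)\cap L^2(0,T;H^1_0)$, the nonlinear terms converge strongly enough to pass to the limit. Continuity into $H^1_0$ follows from the interpolation (Lions--Magenes) lemma, since $z\in L^2(0,T;H^2)$ and $z_t\in L^2(0,T;L^2)$.

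Uniqueness holds in this strong class. Take the difference $w$ of two solutions and test with $w$. Every nonlinear difference is bounded by
\[
C\big(1+\|z_1\|_{L^\infty}+\|z_2\|_{L^\infty}\big)^{2\delta}\big(|w|+|w_x|\big)|w|,
\]
where we use that $z_i\in L^\infty(0,T;H^1_0)\subset L^\infty((0,T)\times(0,1))$ and that $z_{i,x}$ is bounded in $L^\infty_tL^2_x$. The term $\xi w$ is bounded in $L^2$. Absorbing $\|w_x\|^2$ and applying Gr\"onwall gives $w\equiv0$.
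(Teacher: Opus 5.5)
\textbf{Gap in Step 3: the intermediate bound cannot be obtained for your Galerkin approximations.} You are right that for general $\delta$ the $-z_{xx}$ test does not close on its own. Absorbing $\alpha\int(z+\bar u)^\delta z_xz_{xx}$ into the good reaction term $(2\delta+1)\beta\int(z+\bar u)^{2\delta}z_x^2$ would require a restriction such as $\alpha^2<4(2\delta+1)\beta$, and the Agmon route closes by Gr\"onwall only for $\delta\le 2$. So an intermediate bound is needed.

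Neither of your two routes can be run on the sine-Galerkin approximations. Let $P_m$ be the $L^2$-orthogonal projection onto $V_m=\mathrm{span}\{\sin(k\pi x)\}_{k\le m}$, so the approximate equation is $z^m_t=P_m\bigl[z^m_{xx}+\mathcal N(t,z^m)\bigr]$. Testing with elements of $V_m$, such as $z^m$ or $-z^m_{xx}$, is harmless, which is why Step 2 and the rest of Step 3 are fine. Testing with $(z^m)^{2k-1}\notin V_m$ is different. The time derivative and diffusion terms survive, but the nonlinearity is actually tested against $P_m\bigl((z^m)^{2k-1}\bigr)$. The coercive term $-\beta\int(z^m)^{2\delta+1}P_m\bigl((z^m)^{2k-1}\bigr)$ then has no sign, and the exact cancellation of the pure Burgers term is lost. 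Galerkin truncations also do not satisfy a maximum principle. As written, you therefore get no bound on $z^m$ in $L^\infty(0,T;H^1_0)\cap L^2(0,T;H^2)$ that is uniform in $m$, and Step 4 has nothing to pass to the limit.

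\textbf{How to repair it.} Derive the a priori bounds on genuine solutions rather than on Galerkin approximations; this is what the paper does. It applies Leray--Schauder in $\mathcal X_T=L^{2(2\delta+1)}(0,T;H^1_0(0,1))$ to the map $\Phi(\widetilde z,\lambda)$ that solves the heat equation with right-hand side $\lambda\mathcal N(t,\widetilde z)$.

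The fixed points $z=\Phi(z,\lambda)$ are strong solutions with $z\in L^\infty(Q)$. Hence $|z|^{2\delta}z\in L^2(0,T;H^1_0)$ is a legitimate test function; this is exactly your route with $k=\delta+1$. It yields $\int_0^T\!\int_0^1|z|^{2\delta}z_x^2\le C_T$ and $\lambda\int_0^T\|z\|_{L^{4\delta+2}}^{4\delta+2}\le C_T$. The first bound closes the $-z_{xx}$ estimate linearly, because $\int|z+\bar u|^{2\delta}z_x^2\lesssim\|z_x\|^2+\int|z|^{2\delta}z_x^2$. A local-existence-plus-continuation argument for strong solutions would work equally well, and there your maximum-principle route also becomes available.

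\textbf{Minor point on uniqueness.} The term $(z_1^j-z_2^j)(z_2)_xw$ is not pointwise bounded by $C(|w|+|w_x|)|w|$. Either write $z_i^j(z_i)_x=\partial_x\bigl(z_i^{j+1}\bigr)/(j+1)$ and integrate by parts, as the paper does, or bound the term by $C\|w\|_{L^\infty}\|w\|_{L^2}\|(z_2)_x\|_{L^2}$ and absorb it.
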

The proof of Theorem~\ref{Thm_Soln_Trans_sys} follows a similar strategy to that of Theorem~3.18 in \cite{Shirshendu20} and is provided in the Appendix.

Let us denote \[
\mathcal{E}
:=
\left\{
u\in H^3(0,1):
u(0)=u(1)=0,\;
u_{xx}(0)=u_{xx}(1)=0
\right\}.
\]
Using the bootstrap argument, we have the following regularity result.

\begin{thm}\label{Regularity_of_soln}
Let $z(t,\cdot)$ denote the solution to the closed-loop system \eqref{Close_loop_z_eqn}. Then
\[
z\in
L^2(0,T;\mathcal{E})
\cap
H^1\bigl(0,T;H_0^1(0,1)\bigr)
\cap
C\bigl([0,T];H^2(0,1)\cap H_0^1(0,1)\bigr).
\]
In particular,
\[
z\in L^2(0,T;H^3(0,1)),
\qquad
z_t\in L^2\bigl(0,T;H_0^1(0,1)\bigr),
\]
and
\[
z(t)\in \mcD(\mathcal A(\epsilon t))
=
H^2(0,1)\cap H_0^1(0,1)
\]
for every $t\in[0,T]$.
\end{thm}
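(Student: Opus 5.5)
We will bootstrap from Theorem~\ref{Thm_Soln_Trans_sys}: we already know $z\in L^2(0,T;H^2\cap H_0^1)\cap H^1(0,T;L^2)\cap C([0,T];H_0^1)$. The plan is to differentiate the equation in time and work with $w:=z_t$, then recover the extra spatial regularity from the equation itself.

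\textbf{Step 1: the equation for $w$.} Formally $w$ satisfies
\[
w_t=\mathcal A(\epsilon t)w+\xi(\epsilon t)w+\epsilon\big(p_\tau z_x+q_\tau z\big)+\epsilon\,\xi'(\epsilon t)z+D\mathcal F(z)w-\epsilon^2\bar u_{\tau\tau},
\]
with $w(t,0)=w(t,1)=0$. Here $D\mathcal F(z)w$ collects terms of the form $a(z,\bar u)w_x+b(z,\bar u,z_x,\bar u_x)w$, where $a,b$ are polynomials in their arguments. The term $\epsilon^2\bar u_{\tau\tau}$ requires $\bar u\in C^2$ in $\tau$. If only $C^1$ is available, we will instead apply a Galerkin scheme in the basis $\{\phi_k(0,\cdot)\}$, or use difference quotients in $t$, so that only $\bar u_\tau$ appears. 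The initial datum is $w(0)=\mathcal A(0)0+\xi(0)0+\mathcal F(0)=-\epsilon\bar u_\tau(0)$, which lies in $H_0^1$ because $\bar u(\tau)(0)=\bar u(\tau)(1)=0$ for all $\tau$.

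\textbf{Step 2: energy estimate.} Testing the $w$-equation with $-w_{xx}$, or with $w$ and then $w_t$, is the main obstacle, because of the quasilinear coefficient $a(z)=-\alpha(\bar u+z)^\delta$ in front of $w_x$ and the term $-\alpha\delta(\bar u+z)^{\delta-1}z_x w$.
\begin{itemize}
\item Since $z\in C([0,T];H_0^1)\subset L^\infty((0,T)\times(0,1))$, the coefficient $a$ is bounded.
\item Since $z_x\in L^2(0,T;H^1)\subset L^2(0,T;L^\infty)$, the coefficient $b$ lies in $L^2(0,T;L^\infty)$.
\end{itemize}
Hence $|\int b\,w\,w_{xx}|\le \|b\|_\infty\|w\|_{L^2}\|w_{xx}\|$, and by Young's inequality this is at most $\tfrac14\|w_{xx}\|^2+C\|b\|_\infty^2\|w\|^2$. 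Gronwall's inequality with the integrable weight $\|b(t)\|_\infty^2$ then gives
\[
w\in L^\infty(0,T;H_0^1)\cap L^2(0,T;H^2),
\]
and in particular $z_t\in L^2(0,T;H_0^1)$, so $z\in H^1(0,T;H_0^1)$. We will make this rigorous on Galerkin approximations, or at the level of difference quotients, and pass to the limit using the uniqueness statement of Theorem~\ref{Thm_Soln_Trans_sys}.

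\textbf{Step 3: spatial regularity.} Rewrite the equation as
\[
z_{xx}=z_t+\alpha(\bar u+z)^\delta z_x-(q+\xi)z-(\text{remaining terms of }\mathcal F(z)),
\]
which holds for each $t$. Since $z_t\in L^\infty(0,T;H_0^1)$, the right-hand side lies in $L^\infty(0,T;L^2)$, so $z\in C([0,T];H^2)$ after interpolating with the $H^1(0,T;H_0^1)$ bound. Differentiating once more in $x$, every term on the right lies in $L^2(0,T;H^1)$:
\begin{itemize}
\item $z_t\in L^2(0,T;H^2)$;
\item products of $H^1$ functions are again $H^1$;
\item $q\in C^1$, $\bar u\in C^2$, and $\xi$ maps into $H^1_0$ (and into $H^2$, since the $\phi_i$ are smooth).
\end{itemize}
This yields $z\in L^2(0,T;H^3)$.

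\textbf{Step 4: the boundary conditions defining $\mathcal E$.} For a.e.\ $t$, evaluate the equation at $x=0,1$. There $z=0$ and $z_t=0$ (because $z_t\in H_0^1$), and $\bar u=0$. Every nonlinear term vanishes except those carrying the factor $z$, which is zero. The source $-\epsilon\bar u_\tau$ also vanishes at the endpoints, and $\xi z$ vanishes because each $\phi_i\in H_0^1$. What remains is $z_{xx}(t,0)=z_{xx}(t,1)=0$. Hence $z(t)\in\mathcal E$ for a.e.\ $t$, and $z(t)\in\mathcal D$ for every $t$ by the continuity obtained in Step 3.
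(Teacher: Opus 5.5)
Your Steps 1--2 do not close under the paper's hypotheses. The path is only $C^1$ in $\tau$ (Remark~\ref{rem:p_q_u_regularity}: $\bar u\in C^1([0,1];C^2([0,1]))$). So the source $-\epsilon\bar u_\tau(\epsilon t)$ in \eqref{Close_loop_z_eqn_expanded} is merely continuous in $t$, and the term $-\epsilon^2\bar u_{\tau\tau}$ in your $w$-equation does not exist.

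The remedies you name do not remove it. A time difference quotient of the source is $h^{-1}\bigl(\bar u_\tau(\epsilon(t+h))-\bar u_\tau(\epsilon t)\bigr)$, which is not bounded uniformly in $h$ when $\bar u_\tau$ is only continuous. A Galerkin system in the fixed basis $\{\phi_k(0,\cdot)\}$, once differentiated in time, contains the projections of exactly the same derivative. Difference quotients also have a second defect: the $H_0^1$-estimate for $D_hz$ needs $\|D_hz(0)\|_{H^1}=\|z(h)\|_{H^1}/h$ bounded uniformly in $h$, which is essentially what you want to prove near $t=0$. Hence the bound $w\in L^\infty(0,T;H_0^1)\cap L^2(0,T;H^2)$ is not justified, and Steps 3--4 rest on it.

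The missing idea is to remove the source before differentiating. Work with $\tilde w:=z_t+\epsilon\bar u_\tau(\epsilon t)$ (this is $u_t$), that is, $\tilde w=\mcA(\epsilon t)z+\xi(\epsilon t)z+\bigl(\mcF(z)+\epsilon\bar u_\tau(\epsilon t)\bigr)$.
\begin{itemize}
\item Differentiating this in $t$ brings in, besides $z_t$, only $p_\tau$, $q_\tau$, $\xi'$, $\bar u_\tau$ and $\bar u_{\tau x}$.
\item After substituting $z_t=\tilde w-\epsilon\bar u_\tau$, the terms not involving $\tilde w$ need at most $\bar u_{\tau xx}$ (for instance $-\epsilon\mcA(\epsilon t)\bar u_\tau(\epsilon t)$), and all lie in $L^2(0,T;L^2)$.
\item Moreover $\tilde w(0)=0$ and $\tilde w=0$ at $x=0,1$.
\end{itemize}
With this change your Gronwall argument works, implemented either on Galerkin approximations or on the linear equation with coefficients frozen at the known $z$, followed by an identification argument. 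It even yields $z_t\in L^\infty(0,T;H_0^1)\cap L^2(0,T;H^2)$, which is more than the theorem claims.

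The paper avoids time differentiation altogether. It shows that everything on the right-hand side other than $\mcA(\epsilon t)z$ lies in $L^2(0,T;H_0^1)$. This uses Gagliardo--Nirenberg to get $z_x\in L^4$, and the fact that every term vanishes at $x=0,1$, including $\bar u_\tau$. It then applies nonautonomous maximal $L^2$-regularity to the part of $\mcA(\epsilon t)$ in $H_0^1$, whose domain is exactly $\mathcal E$; your Step 4 is that domain computation. That route needs only the $C^1$ path, at the cost of invoking abstract maximal-regularity and interpolation results.

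Finally, in Step 3, $L^\infty(0,T;H^2)\cap H^1(0,T;H_0^1)$ gives only weak continuity into $H^2$. Strong continuity should instead come from $z_{xx}=(\text{right-hand side})$ together with $z_t\in C([0,T];L^2)$. The latter follows from $\tilde w\in L^2(0,T;H^2)\cap H^1(0,T;L^2)$.
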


We defer the proof of Theorem~\ref{Regularity_of_soln} to the Appendix.

\begin{remark}\label{rem:regularity_Lyapunov_functional}
Under the regularity established in Theorem~\ref{Regularity_of_soln}, the Lyapunov functional $V$ defined in \eqref{Lyapunov_Functional} is well-defined on $[0,T]$. Moreover, $V$ is absolutely continuous on $[0,T]$ and therefore belongs to $W^{1,1}(0,T)$. In particular, $V$ is continuous on $[0,T]$ and differentiable for almost every $t \in (0,T)$.
\end{remark}

\subsection{Construction of Lyapunov functional}
In this subsection, we will construct a Lyapunov functional so that the system \eqref{z_eqn_in_operator_form} should be stabilizable. Let $c>0$ (will be chosen later). For any $t\in[0,1/{\epsilon}]$ and $z(t,\cdot)\in \mcD$ let us define 
	\begin{align}\label{Lyapunov_Functional}
		 V(t,z(t,\cdot))&:= c\Big(\big\langle P(\epsilon t) X_1(t), X_1(t)\big\rangle_2\Big)- \frac{1}{2}\big\langle{z}(t,\cdot), \mcA(\epsilon t) z(t,\cdot)\big\rangle_{\mcH(\tau)},
	\end{align}
    where we denote $z_k(t)= \big\langle z(t,\cdot), \phi_k(\epsilon t,\cdot)\big\rangle_{\mcH(\tau)}$ for $k\in \mathbb{N}$ and
   $$ X_1(t):= \begin{pmatrix} 
		 z_{1}(t)\\
		 z_{2}(t)\\
		 \vdots \\
		 z_{n}(t) 
	\end{pmatrix} \in \mathbb{R}^n.$$
    In particular we have
    \begin{align}\label{2nd_Lyapunov_Functional}
		 V(t,z(t,\cdot)):= c\Big(\big\langle P(\epsilon t) X_1(t), X_1(t)\big\rangle_2\Big)- \frac{1}{2}\sum_{k=1}^\infty \lambda_k(\epsilon t) z_k(t)^2.
	\end{align}
  As $\tau\mapsto P(\tau)$ is $C^1([0,1],\mathbb{R}^{n\times n})$ map and $P(\tau)$ is real positive definite symmetric matrix, there exist positive constants $c_{11},c_{12}$ independent of $\tau\in[0,1]$ such that
     \begin{align}\label{c11_P_tau_c22_}
      c_{11}||X||_2^2\leq \langle P(\tau) X, X\rangle_2 \leq c_{12}||X||_2^2,\quad \forall\ X\in \mathbb{R}^{n}.  
    \end{align}
Thus, $$\langle P(\epsilon t) X_1(t), X_1(t)\rangle_2\sim ||X_1(t)||_2^2.$$
 By property $(iii)$ of Theorem~\ref{Cons_of_SLT}, the maps $\tau\mapsto\lambda_1(\tau),\ \tau\mapsto\lambda_n(\tau)$ are continuous. Therefore, they attain their extrema on $[0,1]$, and we set 
\[
\bar{m}:=\max_{\tau\in[0,1]}\lambda_1(\tau),
\qquad
m:=\min_{\tau\in[0,1]}\lambda_n(\tau).
\] 
Then for $t\in[0,1/\epsilon]$, we have
\begin{align*}
m \|X_1(t)\|^2_2  \leq \sum_{k=1}^n \lambda_k(\epsilon t) z_k(t)^2 \leq \bar{m} \|X_1(t)\|^2_2.
\end{align*}
   Hence,
    \begin{align*}
    m\|X_1(t)\|^2_2+ \sum_{k=n+1}^\infty \lambda_k(\epsilon t) z_k(t)^2 \leq\sum_{k=1}^\infty \lambda_k(\epsilon t) z_k(t)^2 \leq \bar{m} \|X_1(t)\|^2_2 + \sum_{k=n+1}^\infty \lambda_k(\epsilon t) z_k(t)^2,
\end{align*}
and therefore
\begin{align*}
  -\frac{1}{2}\bar{m} \|X_1(t)\|^2_2 - \frac{1}{2}\sum_{k=n+1}^\infty \lambda_k(\epsilon t) z_k(t)^2\leq & -\frac{1}{2}\sum_{k=1}^\infty \lambda_k(\epsilon t) z_k(t)^2 \\
  \leq & -\frac{1}{2}m\|X_1(t)\|^2_2 -\frac{1}{2} \sum_{k=n+1}^\infty \lambda_k(\epsilon t) z_k(t)^2.
\end{align*}
 In view of \eqref{2nd_Lyapunov_Functional}, \eqref{c11_P_tau_c22_}, and the above estimates, we obtain
\begin{align*}
		\Big(c_{11}c-\frac{1}{2}\bar{m}\Big)||X_1(t)||^2_2- \frac{1}{2}\sum_{k=n+1}^\infty \lambda_k(\epsilon t) z_k(t)^2  \leq V(t,z(t,\cdot)), 
 \end{align*}
    and
 \begin{align*}
      V(t,z(t,\cdot))  \leq \Big(c_{12}c-\frac{1}{2}m\Big)||X_1(t)||^2_2- \frac{1}{2}\sum_{k=n+1}^\infty \lambda_k(\epsilon t) z_k(t)^2. 
	\end{align*}
Thus, by choosing $c>0$ sufficiently large in the definition of $V$, we ensure that
\[
c_{11}c-\frac{\bar{m}}{2}>0,
\qquad
c_{12}c-\frac{m}{2}>0.
\]
Hence
\begin{align}\label{3rd_Lyapunov_functional}
		 V(t,z(t,\cdot)) \sim \|X_1(t)\|^2_2 - \sum_{k=n+1}^\infty \lambda_k(\epsilon t) z_k(t)^2.
	\end{align}
In particular, since $\lambda_k(\epsilon t)\le -\eta<0$ for all $k>n$, it follows that
$V(t,z(t,\cdot))$ is positive definite.
\begin{lem}
    Let $V$ be defined by \eqref{Lyapunov_Functional}. Then, for all $t\in[0,1/\epsilon]$ and for $z(t,\cdot)\in\mcD$, one has
    \begin{align}\label{V_sim_z_H_1}
        V(t,z(t,\cdot))\sim \|z(t,\cdot)\|^2_{\mcH^1_0(\tau)}.
\end{align}
    Moreover,
 \begin{align}\label{V_lessim_A_X_1}
    V(t,z(t,\cdot))\lesssim \|X_1(t)\|^2_2+\|\mcA(\epsilon t)z(t,\cdot)\|^2_{\mcH(\tau)}.  
    \end{align}
\end{lem}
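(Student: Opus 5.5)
The plan is to work in the eigenbasis $\{\phi_k(\epsilon t,\cdot)\}$ of $\mcH(\tau)$ and reduce both claims to comparisons between weighted $\ell^2$ sums, using \eqref{3rd_Lyapunov_functional} as the starting point. Write $\tau=\epsilon t$ and $z=\sum_k z_k\phi_k(\tau,\cdot)$.

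Since $\mcA(\tau)$ is self-adjoint in $\mcH(\tau)$ with the Sturm--Liouville form, integration by parts gives, for $z\in\mcD$,
\[
-\langle z,\mcA(\tau)z\rangle_{\mcH(\tau)}=\int_0^1 \rho(\tau,x)\,|z_x|^2\,dx-\int_0^1\rho(\tau,x)\,q(\tau,x)\,|z|^2\,dx=-\sum_{k\ge1}\lambda_k(\tau)z_k^2 .
\]

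For the upper bound in \eqref{V_sim_z_H_1}, \eqref{3rd_Lyapunov_functional} gives $V\lesssim \|X_1\|_2^2-\sum_{k>n}\lambda_k z_k^2$. Here $\|X_1\|_2^2\le \|z\|^2_{\mcH(\tau)}$. For the tail, $-\sum_{k>n}\lambda_kz_k^2=-\sum_{k\ge1}\lambda_kz_k^2+\sum_{k\le n}\lambda_kz_k^2$. The first term is bounded by $m_2\|z_x\|^2+m_2\|q\|_\infty\|z\|^2$, and the second by $\bar m\,\|X_1\|_2^2$ in absolute value. Both constants are uniform in $\tau$ by Remark~\ref{rem:p_q_u_regularity} and the bounds $m_1\le\rho\le m_2$.

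For the lower bound, first note $\|z\|^2_{\mcH(\tau)}=\|X_1\|_2^2+\sum_{k>n}z_k^2\le \|X_1\|_2^2+\eta^{-1}\sum_{k>n}(-\lambda_k)z_k^2\lesssim V$, using \eqref{Lambda_k_less_eta}. Next, the identity above gives
\[
m_1\|z_x\|^2_{L^2}\le \int\rho|z_x|^2=-\sum_{k}\lambda_kz_k^2+\int\rho q z^2\le -\sum_{k>n}\lambda_kz_k^2+\max(\bar m,0)\|X_1\|_2^2+C\|z\|^2_{\mcH(\tau)}.
\]
Every term on the right is $\lesssim V$ by the previous step. Combining this with \eqref{Norm_Equivalence} yields $\|z\|^2_{\mcH^1_0(\tau)}\lesssim V$.

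For \eqref{V_lessim_A_X_1}, I would bound the tail term by Cauchy--Schwarz in the form $-\lambda_kz_k^2\le \tfrac12(\lambda_k^2z_k^2+z_k^2)$. Alternatively, since $|\lambda_k|\ge\eta$ for $k>n$, one has $-\lambda_kz_k^2\le \eta^{-1}\lambda_k^2z_k^2$. Then
\[
-\sum_{k>n}\lambda_kz_k^2\le \eta^{-1}\sum_{k\ge1}\lambda_k^2z_k^2=\eta^{-1}\|\mcA(\tau)z\|^2_{\mcH(\tau)}.
\]
The last equality uses Parseval together with $\langle \mcA z,\phi_k\rangle=\lambda_kz_k$, which holds by self-adjointness, property (iv) of Theorem~\ref{Cons_of_SLT}. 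Combined with \eqref{3rd_Lyapunov_functional}, this gives the claim.

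The main obstacle I expect is the justification that the spectral identities hold for all $z\in\mcD$. This covers convergence of $\sum\lambda_kz_k^2$ and of the Parseval expansion of $\mcA z$, and it uses $z\in\mcD$ together with the self-adjointness of $\mcA(\tau)$ on $\mcH(\tau)$. The other point to check is that every constant is uniform in $\tau\in[0,1]$; this follows from the uniform bounds on $\rho,q$ and on $\lambda_1,\dots,\lambda_{n+1}$.
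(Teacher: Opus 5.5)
Your argument is correct and is essentially the paper's proof: the identity $-\langle z,\mcA(\tau)z\rangle_{\mcH(\tau)}=\int_0^1\rho|z_x|^2\,dx-\int_0^1\rho q z^2\,dx=-\sum_k\lambda_k z_k^2$, the tail bounds $z_k^2\le\eta^{-1}(-\lambda_k)z_k^2$ and $-\lambda_k\le\eta^{-1}\lambda_k^2$ for $k>n$, and the equivalence \eqref{3rd_Lyapunov_functional}. One small slip: in the lower bound, the finite part $-\sum_{k\le n}\lambda_k z_k^2$ is bounded by $\max(-m,0)\|X_1\|_2^2$ (or, as in the paper, by $\max_{k\le n}\sup_\tau|\lambda_k(\tau)|\,\|X_1\|_2^2$), not by $\max(\bar m,0)\|X_1\|_2^2$, because $\lambda_n(\tau)$ can be very negative for some $\tau$ even though $\bar m\ge 0$.
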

\begin{proof}
Using the definition of the operator $\mcA(\epsilon t)$, we have
\begin{align*}
\big\langle{z}(t,\cdot),\, \mcA(\epsilon t) z(t,\cdot)\big\rangle_{\mcH(\tau)} & = \bigg\langle{z}(t,\cdot),\, \frac{1}{\rho(\epsilon t,\cdot)}\bigg(\big(\rho(\epsilon t,\cdot) z_x(t,\cdot)\big)_x + \rho(\epsilon t,\cdot) q(\epsilon t,\cdot) z(t,\cdot)\bigg)\bigg\rangle_{\mcH(\tau)} \\
&= \int_0^1 \bigg(z(t,x) \,\big(\rho(\epsilon t,x) z_x(t,x)\big)_x + \rho(\epsilon t,x) q(\epsilon t,x) z^2(t,x)\bigg)\, dx \\
&= -\|z_x(t,\cdot)\|^2_{\mcH(\tau)} + \int_0^1 q(\epsilon t,x) \rho(\epsilon t,x) z^2(t,x)\, dx.
\end{align*}
Thus, we have
\begin{align}\label{A_tau_Integration_by_Parts}
    -\big\langle{z}(t,\cdot),\, \mcA(\epsilon t) z(t,\cdot)\big\rangle_{\mcH(\tau)} =  \|z_x(t,\cdot)\|^2_{\mcH(\tau)} - \int_0^1 q(\epsilon t,x) \rho(\epsilon t,x) z^2(t,x)\, dx.
\end{align}
As $q(\tau,\cdot)$ and $\rho(\tau,\cdot)$ are uniformly bounded with respect to $\tau$, we obtain
% Applying the Cauchy--Schwarz inequality, we get
\begin{align*}
  -\big\langle{z}(t,\cdot),\, \mcA(\epsilon t) z(t,\cdot)\big\rangle_{\mcH(\tau)} \lesssim   \|z(t,\cdot)\|^2_{\mcH^1_0(\tau)}. 
\end{align*}
Hence, we have
\begin{align}\label{V_lessim_z_H}
		 V(t,z(t,\cdot))&:= c\Big(\big\langle P(\epsilon t) X_1(t), X_1(t)\big\rangle_2\Big)- \frac{1}{2}\big\langle{z}(t,\cdot), \mcA(\epsilon t) z(t,\cdot)\big\rangle_{\mcH(\tau)}\lesssim \|z(t,\cdot)\|^2_{\mcH^1_0(\tau)}.
	\end{align}
From \eqref{A_tau_Integration_by_Parts}, we have
\begin{align*}
   \|z_x(t,\cdot)\|^2_{\mcH(\tau)} &=  -\big\langle{z}(t,\cdot),\, \mcA(\epsilon t) z(t,\cdot)\big\rangle_{\mcH(\tau)} + \int_0^1 q(\epsilon t,x) \rho(\epsilon t,x) z^2(t,x)\, dx \\ 
   &= -\sum_{k=1}^{\infty} \lambda_k(\epsilon t)z_k(t)^2 + \int_0^1 q(\epsilon t,x) \rho(\epsilon t,x) z^2(t,x)\, dx \\ 
   &\leq -\sum_{k=1}^{\infty} \lambda_k(\epsilon t)z_k(t)^2 + \|q\|_{\infty}\int_0^1 \rho(\epsilon t,x) z^2(t,x)\, dx. \\ 
   &=-\sum_{k=1}^{\infty} \lambda_k(\epsilon t)z_k(t)^2 + \|q\|_{\infty}\|z(t,\cdot)\|^2_{\mcH(\tau)} \\ 
   &=-\sum_{k=1}^{\infty} \lambda_k(\epsilon t)z_k(t)^2 + \|q\|_{\infty}\sum_{k=1}^{\infty}z_k(t)^2 \\ 
   &=\bigg(-\sum_{k=1}^{n} \lambda_k(\epsilon t)z_k(t)^2 + \|q\|_{\infty}\sum_{k=1}^{n}z_k(t)^2\bigg) \\ 
   &\qquad \qquad \qquad \qquad + \bigg(-\sum_{k=n+1}^{\infty} \lambda_k(\epsilon t)z_k(t)^2 + \|q\|_{\infty}\sum_{k=n+1}^{\infty}z_k(t)^2\bigg).
\end{align*}
Therefore, we have
\begin{align}\label{z_x_less_estimate}
 \|z_x(t,\cdot)\|^2_{\mcH(\tau)}+\|z(t,\cdot)\|^2_{\mcH(\tau)} &\leq \bigg(-\sum_{k=1}^{n} \lambda_k(\epsilon t)z_k(t)^2 + (\|q\|_{\infty}+1)\sum_{k=1}^{n}z_k(t)^2\bigg) \\ 
   &\qquad + \bigg(-\sum_{k=n+1}^{\infty} \lambda_k(\epsilon t)z_k(t)^2 + (\|q\|_{\infty}+1)\sum_{k=n+1}^{\infty}z_k(t)^2\bigg).   
\end{align}
By continuity of the map $\tau\mapsto\lambda_k(\tau)$ on the compact interval $[0,1]$, there exists $M>0$ such that 
 $$|\lambda_k(\epsilon t)|\leq M \quad \forall\ k=1,2,\dots,n,\quad \forall\ t\in [0,1/{\epsilon}].$$
Thus, we have
\begin{align}\label{Finite_part_estimate}
   \bigg(-\sum_{k=1}^{n} \lambda_k(\epsilon t)z_k(t)^2 + (\|q\|_{\infty}+1)\sum_{k=1}^{n}z_k(t)^2\bigg)\leq (M+\|q\|_{\infty}+1)\sum_{k=1}^{n}z_k(t)^2. 
\end{align}
As $\lambda_k(\epsilon t)\le -\eta<0$ for all $k>n$ and $t\in[0,1/\epsilon]$, it follows that
\[
-\lambda_k(\epsilon t)\ge \eta,
\qquad \forall\, k>n.
\]
Hence,
\[
z_k(t)^2
\le
\frac{1}{\eta}\,\big(-\lambda_k(\epsilon t)\big)\,z_k(t)^2,
\qquad \forall\, k>n.
\]
Summing over $k=n+1,\dots$, we obtain
\[
\sum_{k=n+1}^{\infty} z_k(t)^2
\le
\frac{1}{\eta}
\sum_{k=n+1}^{\infty}
\big(-\lambda_k(\epsilon t)\big)\,z_k(t)^2.
\]
Therefore,
\[
(\|q\|_{\infty}+1)\sum_{k=n+1}^{\infty} z_k(t)^2
\le
\frac{(\|q\|_{\infty}+1)}{\eta}
\sum_{k=n+1}^{\infty}
\big(-\lambda_k(\epsilon t)\big)\,z_k(t)^2.
\]
Consequently,
\begin{align}\label{Infinite_part_estimate}
\bigg(-\sum_{k=n+1}^{\infty}\lambda_k(\epsilon t)z_k(t)^2
+& \notag
(\|q\|_{\infty}+1)\sum_{k=n+1}^{\infty}  z_k(t)^2\bigg) \\
& \leq \bigg(1+\frac{(\|q\|_{\infty}+1)}{\eta}\bigg)\bigg(-\sum_{k=n+1}^{\infty}
\lambda_k(\epsilon t)\,z_k(t)^2\bigg).
\end{align}
Using \eqref{Finite_part_estimate} and \eqref{Infinite_part_estimate} in \eqref{z_x_less_estimate}, we obtain
\begin{align*}
    \|z(t,\cdot)\|^2_{\mcH^1_0(\tau)} &\leq (M+\|q\|_{\infty})\sum_{k=1}^{n}z_k(t)^2 + \bigg(1+\frac{\|q\|_{\infty}}{\eta}\bigg)\bigg(-\sum_{k=n+1}^{\infty}
\lambda_k(\epsilon t)\,z_k(t)^2\bigg) \\
&\lesssim \sum_{k=1}^{n}z_k(t)^2 - \sum_{k=n+1}^{\infty}
\lambda_k(\epsilon t)\,z_k(t)^2 \\
&= \|X_1(t)\|^2_2- \sum_{k=n+1}^{\infty}
\lambda_k(\epsilon t)\,z_k(t)^2.
\end{align*}
Thus in view of \eqref{3rd_Lyapunov_functional}, we get
\begin{align}\label{z_H_lessim_V}
  \|z(t,\cdot)\|^2_{\mcH^1_0(\tau)}\lesssim V(t,z(t,\cdot)),\qquad \forall\, t \in [0,1/\epsilon]. 
\end{align}
Combining \eqref{V_lessim_z_H} and \eqref{z_H_lessim_V}, we obtain \eqref{V_sim_z_H_1}. Next, using the fact that
\[
-\lambda_k(\epsilon t)\le \frac{1}{\eta}\lambda_k(\epsilon t)^2,
\qquad \forall\, k>n,
\]
we obtain
\begin{align}\label{Eta_A_z}
-\sum_{k=n+1}^{\infty}\lambda_k(\epsilon t)z_k(t)^2
\leq
\frac{1}{\eta}\sum_{k=n+1}^{\infty}\lambda_k(\epsilon t)^2 z_k(t)^2
\leq
\frac{1}{\eta}\|\mathcal A(\epsilon t)z(t,\cdot)\|^2.
\end{align}
Hence \eqref{V_lessim_A_X_1} follows from \eqref{3rd_Lyapunov_functional} and \eqref{Eta_A_z}.
\end{proof}
Let $z(t,\cdot)$ denote the solution to the closed-loop system \eqref{Close_loop_z_eqn} on $[0,1/\epsilon]$. For simplicity, let us denote
$$V_1(t):=V(t,z(t,\cdot)) = c\Big(\big\langle P(\epsilon t) X_1(t), X_1(t)\big\rangle_2\Big)- \frac{1}{2}\big\langle{z}(t,\cdot), \mcA(\epsilon t) z(t,\cdot)\big\rangle_{\mcH(\tau)}.$$
Next, we compute $\frac{d}{dt}V_1(t)$ and establish the differential inequality satisfied by $V_1(t)$.
\begin{lem}\label{Diff_ineq_lem}
    Let $z(t,\cdot)$ denote the solution to the closed-loop system \eqref{Close_loop_z_eqn} on $[0,1/\epsilon]$. Then, there exists $\epsilon_0>0$  such that, for every $0<\epsilon<\epsilon_0,$ the functional $V_1(t)$ satisfies the following differential inequality
    \begin{align}\label{d_dt_V1_t_lessim_1}
    \frac{d}{dt}V_1(t)   + \hat{p} V_1(t)  
    \lesssim  \Big(V_1(t)^2&+V_1(t)^{{\delta+1}} 
    + V_1(t)^{{\delta}} + V_1(t)^{2\delta+1}\Big)+\epsilon^2   \notag \\
   & + \Big(V_1(t)^{\frac{3}{2}}+V_1(t)^{\frac{\delta+2}{2}} + V_1(t)^{\frac{\delta+1}{2}}  \Big),
\end{align}
for some constant $\hat{p}>0,$ independent of $\epsilon\in(0,\epsilon_0).$
\end{lem}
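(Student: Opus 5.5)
I will differentiate $V_1$ directly, split the result into a "linear" part and a "remainder" part, and then use the equivalences \eqref{V_sim_z_H_1} and \eqref{V_lessim_A_X_1} to turn every norm into a power of $V_1$. The regularity in Theorem~\ref{Regularity_of_soln} and Remark~\ref{rem:regularity_Lyapunov_functional} justify this differentiation a.e. in $t$.

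\textbf{Derivative of the finite-dimensional part.} For $c\langle P(\epsilon t)X_1,X_1\rangle_2$ I use \eqref{ODE_X_1} with $h=\mathcal K_1(\epsilon t)X_1$. The identity \eqref{Identity_Lyapunov} gives
\[
\frac{d}{dt}\Big(c\langle P X_1,X_1\rangle_2\Big)=-c\|X_1\|_2^2+c\epsilon\langle P'X_1,X_1\rangle_2+2c\langle P X_1,\mcR_1\rangle_2 .
\]
The second term is $\lesssim\epsilon\|X_1\|_2^2$.

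\textbf{Derivative of the infinite-dimensional part.} For $-\frac12\langle z,\mcA(\epsilon t)z\rangle_{\mcH(\epsilon t)}$, self-adjointness (Theorem~\ref{Cons_of_SLT}(iv)) gives
\[
-\langle \mcA z, z_t\rangle_{\mcH}+\epsilon\,O\big(\|z\|_{H^1}^2\big).
\]
The $\epsilon$-term comes from differentiating $\rho$ and the coefficients $p,q$ in $\tau$, which are bounded by Remark~\ref{rem:p_q_u_regularity}. Substituting $z_t=\mcA z+\xi z+\mcF(z)$, the main term is
\[
-\|\mcA z\|^2_{\mcH}-\langle\mcA z,\xi z\rangle_{\mcH}-\langle \mcA z,\mcF(z)\rangle_{\mcH}.
\]
The operator $\xi z$ lies in $V_n^\tau$, so $\langle\mcA z,\xi z\rangle=\sum_{k\le n}\lambda_k z_k(\xi z)_k=O(\|X_1\|_2^2)$.

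\textbf{Assembling the linear part.} I estimate $|\langle PX_1,\mcR_1\rangle|$ by Young, with the linear $\epsilon$-parts of $\mcR^1$ giving $\epsilon\|z\|^2$. Choosing $c$ large absorbs the $O(\|X_1\|^2)$ terms into $-\tfrac c2\|X_1\|^2_2$. Keeping $-\tfrac12\|\mcA z\|^2$ and using \eqref{V_lessim_A_X_1}, I get
\[
\frac{d}{dt}V_1+2\hat p\,V_1\lesssim \epsilon V_1+|\langle\mcA z,\mcF\rangle|+|\langle PX_1,\Pi\mcF\rangle|.
\]
For $\epsilon<\epsilon_0$ small the term $\epsilon V_1$ is absorbed, which leaves $\hat p V_1$ on the left.

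\textbf{Nonlinear terms (main obstacle).} The hard step is $\langle \mcA z,\mcF(z)\rangle$, because $\mcF$ contains the convective piece $-\alpha(\delta\bar u^{\delta-1}z+\sum_j\binom{\delta}{j}z^j\bar u^{\delta-j})z_x$. I handle it by Young:
\[
|\langle\mcA z,\mcF\rangle|\le \tfrac14\|\mcA z\|^2+C\|\mcF\|^2_{L^2}.
\]
The $\tfrac14\|\mcA z\|^2$ is absorbed into the retained $-\tfrac12\|\mcA z\|^2$. For $\|\mcF\|^2_{L^2}$ I use $\|z\|_\infty\lesssim\|z\|_{H^1_0}\lesssim V_1^{1/2}$ and $\|z_x\|_{L^2}\lesssim V_1^{1/2}$, together with the Taylor bound \eqref{Taylor_remainder_bound}. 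This gives
\[
\|z^jz_x\|^2\lesssim V_1^{j+1},\qquad \|F(z+\bar u)-F(\bar u)-F'(\bar u)z\|^2\lesssim V_1^2+V_1^{2\delta+1},\qquad \|\epsilon\bar u_\tau\|^2\lesssim\epsilon^2,
\]
which produces the terms $V_1^2,\dots,V_1^{2\delta+1}$ and $\epsilon^2$. The projected term $|\langle PX_1,\Pi\mcF\rangle|\lesssim V_1^{1/2}\|\mcF\|_{L^2}$ is not squared. Its linear-in-$\epsilon$ piece is split by Young as $\le\eta' V_1+C\epsilon^2$. The remaining pieces give the half-integer powers $V_1^{3/2},V_1^{(\delta+2)/2},V_1^{(\delta+1)/2}$, which is consistent with the statement. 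The delicate point is to keep every quantity in terms of $\|z\|_{H^1}$ and $\|\mcA z\|$, never $\|z\|_{H^2}$ alone, so that all the $H^2$-content is absorbed by the $-\|\mcA z\|^2$ dissipation. I will also check that the weighted-space derivative terms involving $p_\tau$ stay $O(\epsilon)$ uniformly.
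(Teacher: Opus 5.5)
Your proposal is correct and follows essentially the same route as the paper. You differentiate $V_1$, use \eqref{Identity_Lyapunov} on the $X_1$-block, keep the dissipation $-\|\mcA(\epsilon t)z\|^2_{\mcH(\tau)}$, apply Young's inequality to every cross term, and close with \eqref{V_lessim_A_X_1} after taking $c$ large and then $\epsilon$ small. Your bound $O(\epsilon\|z\|^2_{H^1})$ for the $\tau$-derivative of the weighted quadratic form is slightly sharper than the paper's $\epsilon(V_1+\|\mcA z\|^2_{\mcH(\tau)})$.

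One caution concerns your estimate $\langle\mcA z,\xi z\rangle_{\mcH(\tau)}=O(\|X_1\|_2^2)$. It relies on $\xi z\in V_n^\tau$, which holds only because the paper identifies the localized feedback $\chi_\omega\sum_k k_kz_k\phi_k$ with its projection $\Pi(\epsilon t)\big(\chi_\omega\sum_k k_kz_k\phi_k\big)$. The paper's Cauchy--Schwarz/Young bound $|\langle\mathcal{B}\mathcal{K}_1X_1,\mcA z\rangle_{\mcH(\tau)}|\le M_1\|X_1\|_2^2+\frac18\|\mcA z\|^2_{\mcH(\tau)}$ is the safer choice. It needs only $\|\mathcal{B}\mathcal{K}_1X_1\|_{\mcH(\tau)}\lesssim\|X_1\|_2$, so it remains valid if the feedback genuinely acts only on $\omega$.
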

\begin{proof}
In view of Theorem~\ref{Regularity_of_soln}, differentiating $V_1(t)$ with respect to $t$ yields
\begin{align}\label{d_dt_V1_t}
    \frac{d}{dt}V_1(t)= c\Big(&2\big\langle P(\epsilon t) X_1(t), X^{\prime}_1(t)\big\rangle_2  + \epsilon\big\langle P^{\prime}(\epsilon t) X_1(t), X_1(t)\big\rangle_2 \Big) - \frac{1}{2}\Lambda(t),
\end{align}
where 
\begin{align*}
   \Lambda(t):&=\frac{d}{dt}\big\langle{z}(t,\cdot), \mcA(\epsilon t) z(t,\cdot)\big\rangle_{\mcH(\tau)} \\ 
   &= 2\big\langle{z_t}(t,\cdot), \mcA(\epsilon t) z(t,\cdot)\big\rangle_{\mcH(\tau)}   
     +\epsilon \big\langle{z}(t,\cdot), \mcA^{\prime}(\epsilon t) z(t,\cdot)\big\rangle_{\mcH(\tau)} \\ 
     & \qquad \qquad +\epsilon \int_0^1 z(t,x)\,[\mcA(\epsilon t)z(t,\cdot)](x)\,\rho(\epsilon t,x)
\left(\int_0^x  p_{\tau}(\epsilon t,s)\,ds\right)\,dx.
\end{align*}

Using \eqref{Close_loop_z_eqn}, we obtain
\begin{align*}
\big\langle z_t(t,\cdot),\mcA(\epsilon t)z(t,\cdot)\big\rangle_{\mcH(\tau)} 
&= \|\mcA(\epsilon t)  z(t,\cdot)\|^2_{\mcH(\tau)}+ \big\langle\mathcal{B} \mathcal{K}_1(\epsilon t) X_1(t),\mcA(\epsilon t)  z(t,\cdot)\big\rangle_{\mcH(\tau)}  \\
    &\qquad \qquad + \big\langle \mcR(\epsilon,t,\cdot), \mcA(\epsilon t)  z(t,\cdot)\big\rangle_{\mcH(\tau)}.
\end{align*}
Further in view of \eqref{Defn_A_tau}, we have
\begin{align*}
   \big\langle{z}(t,\cdot), \mcA^{\prime}(\epsilon t) z(t,\cdot)\big\rangle_{\mcH(\tau)} = \big\langle{z}(t,\cdot), p_\tau(\epsilon t, \cdot) z_x(t,\cdot) +q_\tau(\epsilon t, \cdot)z(t,\cdot)\big\rangle_{\mcH(\tau)}.
\end{align*}
Hence,
\begin{align}\label{Defn_Big_Lambda}
   \Lambda(t)&= 2\|\mcA(\epsilon t)  z(t,\cdot)\|^2_{\mcH(\tau)}+ 2\big\langle\mathcal{B} \mathcal{K}_1(\epsilon t) X_1(t),\mcA(\epsilon t)  z(t,\cdot)\big\rangle_{\mcH(\tau)} \notag \\
    &\quad  +2 \big\langle \mcR(\epsilon,t,\cdot), \mcA(\epsilon t)  z(t,\cdot)\big\rangle_{\mcH(\tau)}    
     +\epsilon \big\langle{z}(t,\cdot), p_\tau(\epsilon t, \cdot) z_x(t,\cdot) +q_\tau(\epsilon t, \cdot)z(t,\cdot)\big\rangle_{\mcH(\tau)} \notag \\ 
     & \qquad  +\epsilon \int_0^1 z(t,x)\,[\mcA(\epsilon t)z(t,\cdot)](x)\,\rho(\epsilon t,x)
\left(\int_0^x  p_{\tau}(\epsilon t,s)\,ds\right)\,dx.
\end{align}
From \eqref{ODE_X_1}, \eqref{Feedback_law}, and \eqref{Identity_Lyapunov}, we infer
\begin{align}\label{P_X1_X1_prime}
   2 \big\langle P(\epsilon t) X_1(t), X^{\prime}_1(t)\big\rangle_2  = -\|X_1(t)\|^2_2+2\big\langle P(\epsilon t) X_1(t), \mcR_1(\epsilon,t)\big\rangle_2.
\end{align}
Substituting \eqref{Defn_Big_Lambda} and \eqref{P_X1_X1_prime} into \eqref{d_dt_V1_t}, we obtain
\begin{align}\label{d_dt_V1_t_equal}
    \frac{d}{dt}V_1(t) = &-c\|X_1(t)\|^2_2-\|\mcA(\epsilon t)  z(t,\cdot)\|^2_{\mcH(\tau)}
    - \big\langle\mathcal{B} \mathcal{K}_1(\epsilon t) X_1(t),\mcA(\epsilon t)  z(t,\cdot)\big\rangle_{\mcH(\tau)} \notag \\
    &- \big\langle \mcR(\epsilon,t,\cdot), \mcA(\epsilon t)  z(t,\cdot)\big\rangle_{\mcH(\tau)} +2c\big\langle P(\epsilon t) X_1(t), \mcR_1(\epsilon,t)\big\rangle_2 \notag \\ 
    &+ c\epsilon\big\langle P^{\prime}(\epsilon t) X_1(t), X_1(t)\big\rangle_2 
     -\frac{\epsilon}{2} \big\langle{z}(t,\cdot), p_\tau(\epsilon t, \cdot) z_x(t,\cdot) +q_\tau(\epsilon t, \cdot)z(t,\cdot)\big\rangle_{\mcH(\tau)} \notag \\ 
     & -\frac{\epsilon}{2} \int_0^1 z(t,x)\,[\mcA(\epsilon t)z(t,\cdot)](x)\,\rho(\epsilon t,x)
\left(\int_0^x  p_{\tau}(\epsilon t,s)\,ds\right)\,dx.
\end{align}
We now estimate the terms on the right-hand side of \eqref{d_dt_V1_t_equal} one by one.  
Applying the Cauchy--Schwarz and Young's inequality, we infer that
\begin{align}\label{BK1_X1_Az_estimate}
   \big|\big\langle\mathcal{B} \mathcal{K}_1(\epsilon t) X_1(t),\mcA(\epsilon t)  z(t,\cdot)\big\rangle_{\mcH(\tau)}\big|&\leq\|\mathcal{B} \mathcal{K}_1(\epsilon t) X_1(t)\|_{\mcH(\tau)}\|\mcA(\epsilon t)  z(t,\cdot)\|_{\mcH(\tau)} \notag \\
   &\leq 2\|\mathcal{B} \mathcal{K}_1(\epsilon t) X_1(t)\|^2_{\mcH(\tau)}+\frac{1}{8} \|\mcA(\epsilon t)  z(t,\cdot)\|^2_{\mcH(\tau)} \notag \\
   &\leq M_1\|X_1(t)\|^2_{\mcH(\tau)}+\frac{1}{8} \|\mcA(\epsilon t)  z(t,\cdot)\|^2_{\mcH(\tau)},
\end{align}
for some $M_1>0.$ By \eqref{Defn_R} and the Cauchy--Schwarz inequality, we have
\begin{align}\label{R_A_z_estimate}
     \Big|\big\langle \mcR(\epsilon,t,&\cdot), \mcA(\epsilon t)  z(t,\cdot)\big\rangle_{\mcH(\tau)}\Big| \notag \\
     &\leq \|\mcA(\epsilon t)  z(t,\cdot)\|_{\mcH(\tau)}\Bigg[\alpha\bigg\|\bigg(\delta \bar{u}^{\delta-1} z+ \sum_{j=2}^{\delta}\binom{\delta}{j} z^{j}\bar{u}^{\delta-j}\bigg) z_x\bigg\|_{\mcH(\tau)} \notag \\
      & \qquad \qquad \qquad \qquad \quad + \alpha\bigg\|\bigg(\sum_{j=2}^{\delta}\binom{\delta}{j} z^{j}\bar{u}^{\delta-j}\bigg)\bar{u}_x\bigg\|_{\mcH(\tau)} \notag \\  
       & \qquad \qquad \qquad \qquad \quad +\big\|\big( F(z+\bar{u})-F(\bar{u})- F^{\prime}(\bar{u})z\big)\big\|_{\mcH(\tau)}\Bigg] \notag \\
      &\qquad \qquad \qquad \qquad \quad +\epsilon\Big|\big\langle \bar{u}_{\tau}(\epsilon t), \mcA(\epsilon t)  z(t,\cdot)\big\rangle_{\mcH(\tau)}\Big|.
\end{align}
We further estimate each term on the right-hand side of \eqref{R_A_z_estimate} separately. 

Using \eqref{Norm_Equivalence} together with the continuous embedding $H_0^1(0,1)\hookrightarrow L^\infty(0,1)$, we obtain that
\[
\mathcal{H}_0^1(\tau) \hookrightarrow L^\infty(0,1)
\]
continuously and uniformly with respect to $\tau \in [0,1]$. Therefore, using this embedding and the uniform boundedness of $\bar{u}$, we get
\begin{align*}
   \bigg\|\bigg(\delta \bar{u}^{\delta-1} z+ \sum_{j=2}^{\delta}\binom{\delta}{j} z^{j}\bar{u}^{\delta-j}\bigg) z_x\bigg\|_{\mcH(\tau)}&\leq \bigg\|\delta \bar{u}^{\delta-1} z+ \sum_{j=2}^{\delta}\binom{\delta}{j} z^{j}\bar{u}^{\delta-j}\bigg\|_{L^\infty} \big\| z_x\big\|_{\mcH(\tau)} \\
   &\lesssim \left(
\|z\|_{L^\infty}
+\sum_{j=2}^{\delta}\|z\|_{L^\infty}^{\,j}
\right)\|z_x\|_{\mcH(\tau)} \\
&\lesssim \left(
\|z\|_{\mcH^1_0(\tau)}
+\sum_{j=2}^{\delta}\|z\|_{\mcH^1_0(\tau)}^{\,j}
\right)\|z\|_{\mcH^1_0(\tau)} \\
&=\|z\|_{\mcH_0^1(\tau)}^2
\left(
1+\sum_{j=2}^{\delta}\|z\|_{\mcH_0^1(\tau)}^{\,j-1}
\right) \\
& \lesssim \|z\|_{\mcH_0^1(\tau)}^2
\Big(1+\|z\|_{\mcH_0^1(\tau)}^{\delta-1}\Big) \\
& \lesssim V_1(t)\Big(1+V_1(t)^{\frac{\delta-1}{2}}\Big).
\end{align*}

Using the uniform boundedness of $\bar{u}_x$, we have

\begin{align*}
\bigg\|
\bigg(
\sum_{j=2}^{\delta}\binom{\delta}{j} z^{j}\bar{u}^{\delta-j}
\bigg)\bar{u}_x
\bigg\|_{\mcH(\tau)}
\lesssim
\sum_{j=2}^{\delta}
\|z^j\|_{\mcH(\tau)} 
&\lesssim
\sum_{j=2}^{\delta}
\|z\|_{L^\infty}^{j-1}\|z\|_{\mcH(\tau)} \\
&\lesssim
\sum_{j=2}^{\delta}
\|z\|_{\mcH_0^1(\tau)}^{j} \\
&\lesssim
\|z\|_{\mcH_0^1(\tau)}^2
\left(
1+\|z\|_{\mcH_0^1(\tau)}^{\delta-2}
\right) \\
&\lesssim
V_1(t)\left(
1+V_1(t)^{\frac{\delta-2}{2}}
\right).
\end{align*}

Further, applying \eqref{Taylor_remainder_bound} pointwise with $\bar v=\bar u(\epsilon t,x)$ and $\bar h=z(t,x)$, we obtain

\begin{align*}
   \big\|\big( F(z+\bar{u})-F(\bar{u})- F^{\prime}(\bar{u})z\big)\big\|_{\mcH(\tau)} \lesssim \|z(t,\cdot)\|_\infty^2 + \|z(t,\cdot)\|_\infty^{2\delta+1} \lesssim V_1(t) + V_1(t)^{\frac{2\delta+1}{2}}.
\end{align*}

Using the uniform boundedness of $\bar{u}_\tau$, we obtain
\begin{align*}
   \epsilon\big|\big\langle \bar{u}_{\tau}(\epsilon t), \mcA(\epsilon t)  z(t,\cdot)\big\rangle_{\mcH(\tau)}\big|\lesssim \epsilon \|\mcA(\epsilon t)  z(t,\cdot)\|_{\mcH(\tau)}.
\end{align*}

Thus, we have
\begin{align}\label{R_A_z_estimate2}
     \big|\big\langle \mcR(\epsilon,t,&\cdot), \mcA(\epsilon t)  z(t,\cdot)\big\rangle_{\mcH(\tau)}\big| \notag \\ 
     &\lesssim \|\mcA(\epsilon t)  z(t,\cdot)\|_{\mcH(\tau)} \bigg( V_1(t)\Big( 1+V_1(t)^{\frac{\delta-1}{2}} + V_1(t)^{\frac{\delta-2}{2}} + V_1(t)^{\frac{2\delta -1}{2}}\Big)+\epsilon \bigg).
\end{align}
In view of Corollary~\ref{Pole_placement} and the estimate
\[
\|X_1(t)\|_2^2 \le \|z(t,\cdot)\|_{\mcH(\tau)}^2 \lesssim V_1(t),
\]
we obtain
\begin{align}\label{P_R1_estimate}
    |2c\big\langle  P(\epsilon t)  X_1(t), & \mcR_1(\epsilon,t)\big\rangle_2|  \notag  \\
    &\lesssim c\|X_1(t)\|_2 \|\mcR_1(\epsilon,t)\|_{2} \notag \\
    &\lesssim c\|X_1(t)\|_2\Big(\|\mcR(\epsilon,t,\cdot)\|_{\mcH(\tau)}+\epsilon \|z(t,\cdot)\|_{\mcH(\tau)}\Big) \notag \\
    &\lesssim c\sqrt{V_1(t)}
\bigg[
V_1(t)\Big(
1
+V_1(t)^{\frac{\delta-1}{2}}
+V_1(t)^{\frac{\delta-2}{2}}
+V_1(t)^{\frac{2\delta-1}{2}}
\Big)
+\epsilon\big(1+V_1(t)^{\frac12}\big)
\bigg].
\end{align}
By Corollary~\ref{Pole_placement}, there exists a constant $C_2>0$ such that
\[
\|P^{\prime}(\epsilon t)\| \le C_2, \qquad \forall\, t\in[0,1/\epsilon].
\]
Hence,
\begin{align}\label{P_prime__X1_estimate}
    \big|c\epsilon\big\langle P^{\prime}(\epsilon t) X_1(t), X_1(t)\big\rangle_2\big|\lesssim c\epsilon \|X_1(t)\|^2_2\lesssim c\epsilon V_1(t).
\end{align}
Using the fact that $p_\tau$ and $q_\tau$ are uniformly bounded, we have
\begin{align}\label{z_p_tau_q_tau_estimate}
   \Big| \frac{\epsilon}{2} \big\langle{z}&(t,\cdot), p_\tau(\epsilon t, \cdot) z_x(t,\cdot) +q_\tau(\epsilon t, \cdot)z(t,\cdot)\big\rangle_{\mcH(\tau)}\Big| \notag \\
   &\lesssim \epsilon\Big(\|z(t,\cdot)\|_{\mcH(\tau)} \|z_x(t,\cdot)\|_{\mcH(\tau)}+\|z(t,\cdot)\|^2_{\mcH(\tau)}\Big)
   \lesssim \epsilon \|z(t,\cdot)\|^2_{\mcH^1_0(\tau)} \lesssim \epsilon V_1(t).
\end{align}
Finally, we estimate the last integral term as follows:
\begin{align}\label{Integral_estimate}
 \bigg|\frac{\epsilon}{2} \int_0^1 z(t,x)\,[\mcA(\epsilon t)z(t,\cdot)](x)\,&\rho(\epsilon t,x)
\left(\int_0^x  p_{\tau}(\epsilon t,s)\,ds\right)\,dx \bigg| \notag \\
&\lesssim \epsilon \int_0^1\big| z(t,x)\,[\mcA(\epsilon t)z(t,\cdot)](x)\big|\,\rho(\epsilon t,x)\, dx \notag \\
&\lesssim \epsilon\Big(\|z(t,\cdot)\|^2_{\mcH(\tau)}+\|\mcA(\epsilon t)z(t,\cdot)\|^2_{\mcH(\tau)}\Big) \notag \\
&\lesssim \epsilon\Big(V_1(t)+\|\mcA(\epsilon t)z(t,\cdot)\|^2_{\mcH(\tau)}\Big).
\end{align}
Using the estimates \eqref{BK1_X1_Az_estimate}, \eqref{R_A_z_estimate2}, \eqref{P_R1_estimate}, \eqref{P_prime__X1_estimate}, \eqref{z_p_tau_q_tau_estimate}, and \eqref{Integral_estimate} in \eqref{d_dt_V1_t_equal}, we obtain
\begin{align}\label{d_dt_V1_t_lessim}
    \frac{d}{dt} & V_1(t)  +\big(c-M_1\big)\|X_1(t)\|^2_2+\Big(\frac{7}{8}-C\epsilon\Big) \|\mcA(\epsilon t)  z(t,\cdot)\|^2_{\mcH(\tau)} \notag \\ 
    &\leq C\|\mcA(\epsilon t)  z(t,\cdot)\|_{\mcH(\tau)} \Big(V_1(t)+V_1(t)^{\frac{\delta+1}{2}} 
    + V_1(t)^{\frac{\delta}{2}} + V_1(t)^{\frac{2\delta+1}{2}}\Big)+C\epsilon \|\mcA(\epsilon t)  z(t,\cdot)\|_{\mcH(\tau)}  \notag \\
   &\qquad \quad + C c\Big(\epsilon\sqrt{V_1(t)} + V_1(t)^{\frac{3}{2}}+V_1(t)^{\frac{\delta+2}{2}} + V_1(t)^{\frac{\delta+1}{2}} + V_1(t)^{\delta+1}\Big) +2 C(c+1)\epsilon V_1(t),   
\end{align}
where $C>0$ is some constant.
By Young’s inequality, for any $\theta>0,$ we have
\begin{align*}
    \epsilon \|\mcA(\epsilon t)  z(t,\cdot)\|_{\mcH(\tau)}&\leq C_{\theta}\epsilon^2 + \frac{\theta}{2} \|\mcA(\epsilon t)  z(t,\cdot)\|^2_{\mcH(\tau)},  \\
    \|\mcA(\epsilon t)  z(t,\cdot)\|_{\mcH(\tau)} &\Big(V_1(t)+V_1(t)^{\frac{\delta+1}{2}} 
    + V_1(t)^{\frac{\delta}{2}} + V_1(t)^{\frac{2\delta+1}{2}} \Big)  \\ 
    &\leq \frac{\theta}{2} \|\mcA(\epsilon t)  z(t,\cdot)\|^2_{\mcH(\tau)}+ 4 C_{\theta}\Big(V_1(t)^2+V_1(t)^{{\delta+1}} 
    + V_1(t)^{{\delta}} + V_1(t)^{2\delta+1}\Big),  \\
    \epsilon\sqrt{V_1(t)}&\leq \frac{\theta}{2} V_1(t)+C_{\theta}\epsilon^2.
\end{align*}
Employing the above estimates in \eqref{d_dt_V1_t_lessim} yields
\begin{align}\label{d_dt_V1_t_lessim_}
    \frac{d}{dt}V_1(t)  &+\big(c-M_1\big)\|X_1(t)\|^2_2+\Big(\frac{7}{8}-C\epsilon-C\theta \Big) \|\mcA(\epsilon t)  z(t,\cdot)\|^2_{\mcH(\tau)} \notag \\ 
    &\leq 4 C C_{\theta} \Big(V_1(t)^2+V_1(t)^{{\delta+1}} 
    + V_1(t)^{{\delta}} +V_1(t)^{2\delta+1} \Big)+C C_{\theta}(1+c)\epsilon^2+ \frac{1}{2}C c\theta V_1(t)   \notag \\
   &\qquad \quad + C c\Big(V_1(t)^{\frac{3}{2}}+V_1(t)^{\frac{\delta+2}{2}} + V_1(t)^{\frac{\delta+1}{2}} + V_1(t)^{\delta+1} \Big) + 2C(c+1)\epsilon V_1(t).
\end{align}   
We choose $c>0$ sufficiently large such that $c - M_1 > \tfrac{7}{8}$, and $\theta>0$, $\epsilon>0$ sufficiently small so that $\tfrac{7}{8} - C\epsilon - C\theta > 0$. Hence, using \eqref{V_lessim_A_X_1}, we obtain
\begin{align*}
  \Big(\frac{7}{8}-C\epsilon-C\theta \Big) V_1(t) \lesssim \big(c-M_1\big)\|X_1(t)\|^2_2+\Big(\frac{7}{8}-C\epsilon-C\theta \Big) \|\mcA(\epsilon t)  z(t,\cdot)\|^2_{\mcH(\tau)}.  
\end{align*}
Hence, there exists a constant $M_2>0$ such that
\begin{align*}
 M_2 \Big(\frac{7}{8}-C\epsilon-C\theta \Big) V_1(t) \leq \big(c-M_1\big)\|X_1(t)\|^2_2+\Big(\frac{7}{8}-C\epsilon-C\theta \Big) \|\mcA(\epsilon t)  z(t,\cdot)\|^2_{\mcH(\tau)}.  
\end{align*}
Using the above inequality in \eqref{d_dt_V1_t_lessim_}, and moving the terms
$\frac12 Cc\theta V_1(t)$ and $C(c+2)\epsilon V_1(t)$
to the left-hand side, we obtain 
\begin{align}\label{d_dt_V1_t_lessim_2}
    \frac{d}{dt}V_1(t)  &+\bigg( M_2 \Big(\frac{7}{8}-C\epsilon-C\theta \Big)-\frac{1}{2}C c\theta- 2C(c+1)\epsilon  \bigg) V_1(t) \\ 
    & \, \leq 4C C_{\theta} \Big(V_1(t)^2+V_1(t)^{{\delta+1}} 
    + V_1(t)^{{\delta}} + V_1(t)^{2\delta+1}\Big)+C C_{\theta}(1+c)\epsilon^2    \notag \\
   &\qquad \qquad \qquad \,\, + C c\Big(V_1(t)^{\frac{3}{2}}+V_1(t)^{\frac{\delta+2}{2}} + V_1(t)^{\frac{\delta+1}{2}} +V_1(t)^{\delta+1} \Big) .
\end{align}
Choose $\epsilon_0>0$ and $\theta>0$ sufficiently small such that $$\hat{p}:= M_2 \Big(\frac{7}{8}-C \epsilon_0-C\theta \Big)-\frac{1}{2}C c\theta-2C(c+1)\epsilon_0 >0.$$ 
Then, for every $0<\epsilon<\epsilon_0,$ 
$$M_2 \Big(\frac{7}{8}-C \epsilon-C\theta \Big)-\frac{1}{2}C c\theta-2C(c+1)\epsilon \geq \hat{p}.$$
Hence, \eqref{d_dt_V1_t_lessim_1} follows. This completes the proof.
\end{proof}
\begin{lem}\label{smallness_0f_z}
Let $\epsilon \in (0,\epsilon_0)$ be sufficiently small as in the proof of Lemma~\ref{Diff_ineq_lem}, and let $z(t,\cdot)$ be the solution of \eqref{z_eqn_in_operator_form}. Then there exists a constant $C_3 > 0$, independent of $\epsilon$, such that
\begin{align}\label{z_1_by_epsilon}
\|z(t)\|_{H^1_0(0,1)} \le C_3\, \epsilon \quad \forall\, t \in \left[0, \frac{1}{\epsilon}\right].
\end{align}
\end{lem}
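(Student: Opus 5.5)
The plan is to derive Lemma~\ref{smallness_0f_z} from the differential inequality of Lemma~\ref{Diff_ineq_lem} by a continuity (bootstrap) argument, and then to use the equivalence \eqref{V_sim_z_H_1} together with the norm equivalence \eqref{Norm_Equivalence}.

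First, note what the right-hand side of \eqref{d_dt_V1_t_lessim_1} looks like when $V_1$ is small. All powers of $V_1$ appearing there are strictly greater than $1$. This holds provided $\delta\ge 2$, so that $V_1^{\delta}$ and $V_1^{(\delta+1)/2}$ are superlinear. When $\delta=1$, the terms $V_1^{\delta}=V_1$ and $V_1^{(\delta+1)/2}=V_1$ are linear. Tracing their origin, these linear terms come only from crude bounds, such as writing $1+V_1^{(\delta-2)/2}$ in the sum $\sum_{j=2}^{\delta}$, which is in fact empty when $\delta=1$. I would therefore first sharpen those estimates so that every nonlinear contribution is at least of order $V_1^{3/2}$.

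With that in hand, there are $\kappa>0$ and $C_4>0$, independent of $\epsilon$, such that whenever $V_1(t)\le\kappa$,
\[
\frac{d}{dt}V_1(t)+\hat p V_1(t)\le \frac{\hat p}{2}V_1(t)+C_4\epsilon^2 .
\]
Here $\kappa$ is chosen so that $C\sum V_1^{a_i}\le \frac{\hat p}{2}V_1$ for all exponents $a_i>1$ occurring in \eqref{d_dt_V1_t_lessim_1}.

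Next comes the bootstrap. Since $z(0)=0$, we have $V_1(0)=0$. Moreover $V_1$ is continuous on $[0,1/\epsilon]$ by Remark~\ref{rem:regularity_Lyapunov_functional}. Define
\[
t^*:=\sup\{s\in[0,1/\epsilon]: V_1\le\kappa \text{ on }[0,s]\}.
\]
On $[0,t^*]$, Gr\"onwall's inequality applied to $\frac{d}{dt}V_1+\frac{\hat p}{2}V_1\le C_4\epsilon^2$ gives
\[
V_1(t)\le \frac{2C_4}{\hat p}\epsilon^2\bigl(1-e^{-\hat p t/2}\bigr)\le \frac{2C_4}{\hat p}\epsilon^2 .
\]
Shrinking $\epsilon_0$ if necessary so that $\frac{2C_4}{\hat p}\epsilon_0^2<\kappa$, this bound is strictly below $\kappa$. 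By continuity, $t^*=1/\epsilon$. Hence $V_1(t)\lesssim \epsilon^2$ on the whole interval $[0,1/\epsilon]$, uniformly in $\epsilon$.

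Finally, \eqref{V_sim_z_H_1} gives $\|z(t)\|^2_{\mcH^1_0(\tau)}\lesssim V_1(t)\lesssim\epsilon^2$. The uniform equivalence \eqref{Norm_Equivalence} then yields $\|z(t)\|_{H^1_0(0,1)}\le C_3\epsilon$.

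The constants are independent of $\epsilon$ because $\hat p$, $\kappa$, $C_4$, and the implicit constants in $\lesssim$ are all uniform on $\Theta$. There is one more point to settle: $z$ in the statement is the solution of the closed-loop system \eqref{Close_loop_z_eqn}, i.e. \eqref{z_eqn_in_operator_form} with the feedback \eqref{Feedback}. It exists on all of $[0,1/\epsilon]$ by Theorem~\ref{Thm_Soln_Trans_sys}, so no blow-up issue arises in the bootstrap.

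The main obstacle is the first step: ensuring that no term in the differential inequality is merely linear in $V_1$ with an $\epsilon$-independent constant. This matters in the case $\delta=1$, where the stated form of \eqref{d_dt_V1_t_lessim_1} contains $V_1^{\delta}$. I expect to have to redo the remainder bounds for $\mcR$, checking that the Burgers term $\alpha\delta\bar u^{\delta-1}zz_x$ contributes $\|z\|_\infty\|z_x\|\lesssim V_1$ inside the product with $\|\mcA z\|$. After Young's inequality this yields $V_1^2$, which is superlinear, so the absorption into $\hat pV_1$ goes through.
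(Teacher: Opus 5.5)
Your proposal is correct and follows the paper's proof. It uses the same continuity argument keeping $V_1$ below a small threshold (your $\kappa$, the paper's $\nu$), absorbs the nonlinear terms into $\hat p V_1$, applies Gr\"onwall with $V_1(0)=0$ to get $V_1\lesssim\epsilon^2$, and concludes via \eqref{V_sim_z_H_1} and \eqref{Norm_Equivalence}.

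Your preliminary step for $\delta=1$ is not superfluous; it repairs a real gap in the paper. The paper bounds the whole bracket in \eqref{d_dt_V1_t_leq_C} by $7\nu^{1/2}V_1$. That bound fails when $\delta=1$, because $V_1^{\delta}$ and $V_1^{(\delta+1)/2}$ are then linear in $V_1$ with non-small constants (of size $C_\theta$ and $c$, respectively). As you correctly diagnose, both terms come only from the factor $V_1^{(\delta-2)/2}$ attached to the sum $\sum_{j=2}^{\delta}$, which is empty for $\delta=1$. Dropping it leaves only exponents $\ge 3/2$, so your absorption and bootstrap go through for every $\delta\ge1$.
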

\begin{proof}
From Lemma~\ref{Diff_ineq_lem}, there exists a constant $C_4>0$ such that
\begin{align}\label{d_dt_V1_t_leq_C}
    \frac{d}{dt} & V_1(t)   + \hat{p} V_1(t) \notag \\ 
   & \leq C_4 \Big(V_1(t)^2+V_1(t)^{{\delta+1}} 
    + V_1(t)^{{\delta}} +V_1(t)^{\frac{3}{2}}+V_1(t)^{\frac{\delta+2}{2}} + V_1(t)^{\frac{\delta+1}{2}} +V_1(t)^{2\delta+1} + \epsilon^2 \Big)    \notag \\
\end{align}
Let $0<\nu<\min \big\{1, (\frac{\hat{p}}{14C_4})^2\big\}$ be a constant. Using $V_1(0)=0$ and the continuity of the map $t \mapsto V_1(t)$, there exists $t_0\in(0,1/\epsilon]$ such that
\begin{align}  
V_1(t) < \nu \quad \text{for all } t \in [0, t_0).
\end{align}
We claim that 
\begin{align}\label{V_1_less_nu}
V_1(t) < \nu \quad \text{for all } t \in [0, 1/\epsilon]
\end{align}
 Indeed, if not, since $t\mapsto V_1(t)$ is continuous, there exists $T^*\in (0,1/\epsilon]$ such that
 \begin{align}\label{There_exists_T*_positive}
         V_1(t)<\nu,\quad \forall \ 0 \leq t < T^*,\ \text{and}\ V_1(T^*) = \nu.
         \end{align}
In view of \eqref{There_exists_T*_positive}, it follows that for all $t\in[0,T^*)$,
\begin{align*}
   \Big(V_1(t)^2+V_1(t)^{{\delta+1}} 
    + V_1(t)^{{\delta}}
    + V_1(t)^{\frac{3}{2}}+V_1(t)^{\frac{\delta+2}{2}} + V_1(t)^{\frac{\delta+1}{2}} + V_1(t)^{2\delta+1}\Big)\leq 7\nu^{1/2}V_1(t).
\end{align*}
Substituting the above estimate into \eqref{d_dt_V1_t_leq_C}, we obtain
\begin{align}
     \frac{d}{dt}V_1(t) + \hat{p} V_1(t) \leq 7 C_{4}\nu^{1/2}V_1(t)+C_{4}\epsilon^2.
\end{align}
The above inequality can be rewritten as
\begin{align}
     \frac{d}{dt}V_1(t) + \big(\hat{p}- 7C_{4}\nu^{1/2}\big) V_1(t) \leq C_{4} \epsilon^2.
\end{align}
 As $0<\nu<\min \big\{1, (\frac{\hat{p}}{14C_{4}})^2\big\}$, it follows that $7C_{4}\nu^{1/2} < \frac{\hat{p}}{2}$. Then, for all $t \in [0,T^*)$, we have
\begin{align}\label{Simplified_diff_ineq}
     \frac{d}{dt}V_1(t) + \frac{\hat{p}}{2} V_1(t) \leq C_{4} \epsilon^2.
\end{align}
 Integrating \eqref{Simplified_diff_ineq} over $[0,T^*]$ and using $V_1(0)=0$ yields
 \[
V_1(T^*)\leq \frac{2C_{4}}{\hat{p}}\epsilon^2
\left(1-e^{-\frac{\hat{p}}{2}T^*}\right) < \frac{2C_{4}}{\hat{p}}\epsilon^2<\frac{\epsilon^2}{\sqrt{\nu}}.
\]
Choose $\epsilon>0$ small enough so that $\epsilon<\nu^{3/4}.$ Thus, we have
$$V_1(T^*)<\nu,$$
which contradicts \eqref{There_exists_T*_positive}. Therefore \eqref{V_1_less_nu} holds. Consequently, \eqref{Simplified_diff_ineq} holds on the whole interval $[0,1/\epsilon]$. Integrating it over $[0,t]$, we obtain
\begin{align}\label{V_1_less_epsilon2}
    V_1(t) \le \frac{1}{\sqrt{\nu}}\epsilon^2, \quad \forall\, t \in \left[0, \frac{1}{\epsilon}\right].
\end{align}
Thus, \eqref{z_1_by_epsilon} follows from \eqref{V_sim_z_H_1}, \eqref{Norm_Equivalence}, and \eqref{V_1_less_epsilon2}.
\end{proof}

We are now in a position to prove the main theorem.
\begin{proof}[Proof of Theorem~\ref{Main_thm}]
Let $\hat\delta>0$ be given, as in the statement of Theorem~\ref{Main_thm}.
By Lemma~\ref{smallness_0f_z}, there exists $\epsilon_0^*>0$ such that, for every $0<\epsilon\le\epsilon_0^*$, the solution $z$ of \eqref{Close_loop_z_eqn}, or equivalently of \eqref{Close_loop_z_eqn_in_Xi}, satisfies
\[
\|z(t)\|_{H_0^1(0,1)} \le C_3\,\epsilon \qquad \text{for all } t\in\Big[0,\tfrac{1}{\epsilon}\Big].
\]

Set
\[
T_{\hat\delta} := \max\left\{ \frac{C_3}{\hat\delta},\ \frac{1}{\epsilon_0^*} \right\}.
\]
Let $T \ge T_{\hat\delta}$ be arbitrary, and set $\epsilon := \dfrac1T$. Then
\[
0 < \epsilon \le \epsilon_0^* \qquad \text{and} \qquad C_3\,\epsilon = \frac{C_3}{T} \le \frac{C_3}{T_{\hat\delta}} \le \hat\delta.
\]
In particular, since $\epsilon T = 1$, the time $T$ lies at the right endpoint $\frac1\epsilon$ of the validity interval for Lemma~\ref{smallness_0f_z}, so the lemma gives
$$\|z(T)\|_{H_0^1(0,1)} \le C_3\,\epsilon \le \hat\delta.$$
Recalling that $z(t) = u(t) - \bar u(\epsilon t)$ (see \eqref{Intro_z_h}) and that $\bar u(\epsilon T) = \bar u(1) = u_1$, we obtain
$$\|u(T) - u_1\|_{L^2(0,1)} \le \|u(T) - u_1\|_{H_0^1(0,1)} = \|z(T)\|_{H_0^1(0,1)} \le \hat\delta.$$
Since $T \ge T_{\hat\delta}$ was arbitrary, this proves the desired conclusion for every $T \ge T_{\hat\delta}$.
\end{proof}

\section{Conclusion}

\begin{itemize} 
 \item In this paper, we have investigated the approximate controllability properties of the Generalized Burgers-Huxley (GBH) equation. We proved that the system is not globally approximately controllable in any prescribed finite time.
 \item At present, we do not have a complete answer to the problem of partial approximate controllability (see~Remark~\ref{PAC}), and we leave this question for future investigation. 
 \item Nevertheless, we established that for any two steady states $u_0$ and $u_1$ lying in the same connected component of the set of steady states, it is possible to drive the system from $u_0$ to an arbitrarily small neighborhood of $u_1$ in an appropriate time using a localized interior control. 
 \item It remains unknown how many connected components the set of steady states of the GBH equation possesses. 
 \item The question of whether the system is approximately controllable between steady states belonging to different connected components of the set of steady states remains open. 
 \item It is still unknown whether the system can be driven to the zero state (or to any specific steady state) in finite time from an arbitrary initial state. We intend to address these open problems in future work.
 \end{itemize}
\par\medskip
\hspace{-0.6cm}\textbf{ Acknowledgements.} The authors thank the Department of Mathematics and Statistics at the Indian Institute of Technology Kanpur for providing a conducive research environment. A.~ Patel acknowledges the financial support of the Ministry of Education (MHRD), Government of India, through the GATE fellowship. M. Biswas acknowledges the support of IIT Kanpur.

\appendix
\section{}

This appendix collects the detailed proofs of Lemma~\ref{lem:left_ivp}, Lemma~\ref{lem:right_ivp}, Theorem~\ref{Thm_Soln_Trans_sys}, and Theorem~\ref{Regularity_of_soln}. 

\begin{proof}[Proof of Lemma~\ref{lem:left_ivp}]
Set $y_1 = u$, $y_2 = u_x$, and $Y = (y_1, y_2)^\top$. The initial-value
problem \eqref{ivp_ss} is then equivalent to the first-order system
\begin{equation}\label{first_order_system}
\begin{cases}
Y' = G(Y), \\[1mm]
Y(0) = \begin{pmatrix} 0 \\ s \end{pmatrix},
\end{cases}
\end{equation}
where $G : \mathbb{R}^2 \to \mathbb{R}^2$ is given by
\[
G(y_1, y_2) :=
\begin{pmatrix}
y_2 \\[2mm]
\alpha\, y_1^\delta\, y_2
- \beta\, y_1 (1 - y_1^\delta)(y_1^\delta - \gamma)
\end{pmatrix}.
\]
Since $G \in C^1(\mathbb{R}^2;\mathbb{R}^2)$, the Picard--Lindel\"{o}f theorem
guarantees that for each $s \in \mathbb{R}$ there exists a unique maximal solution
\[
Y = (y_1,\, y_2)^\top
\in C^1\bigl([0, l_{\max}(s));\, \mathbb{R}^2\bigr).
\]
In particular, $u := y_1 \in C^2\bigl([0, l_{\max}(s))\bigr)$. Fix $r > 0$ and set $B_r(0) := \{Y \in \mathbb{R}^2 : |Y| \le r\}$.
Since $G(0) = 0$ and $G \in C^1$, the mean value theorem yields
\begin{equation}\label{linear_growth}
|G(Y)| \le C_r\, |Y|, \qquad \text{for all } Y \in B_r(0),
\end{equation}
where
$$C_r := \sup_{Y \in B_r(0)} \|DG(Y)\| < \infty.$$
Choose $s_0 > 0$ such that $s_0 < r\, e^{-C_r b} < r.$ Fix any $s\in \mathbb{R}$ with $|s| < s_0,$ so that $|Y(0)| = |s| < s_0 < r$.
We wish to show $l_{\max}(s) > b$. If $l_{\max}(s) > b$ already, there is nothing to prove. So assume for contradiction that
\begin{equation}\label{contra_assumption}
l_{\max}(s) \le b.
\end{equation}
Define $W(x) := |Y(x)|^2, \quad \text{for all}\,\,  x \in [0, l_{\max}(s)),$ and $\mathcal{\mathcal{G}} := \Bigl\{x \in [0, l_{\max}(s)): |Y(\xi)| < r \ \text{ for all } \xi \in [0, x]\Bigr\}.$
Since $|Y(0)| = |s| < r$, we have $0 \in \mathcal{G}$, so 
$\mathcal{G}$ is nonempty and the supremum $x^* := \sup \mathcal{G}$ is well defined. Moreover, by continuity of $Y$, there exists
$\tilde{\varepsilon} > 0$ such that $x^* \ge \tilde{\varepsilon} > 0$,
and clearly $x^* \le l_{\max}(s)$.

\medskip
\noindent\textit{Claim:}
\begin{equation}\label{Ys_bound}
|Y(x)| \le |s|\, e^{C_r x} < r,
\qquad \text{for all } x \in [0, x^*).
\end{equation}

\noindent\textit{Proof of Claim.}
Fix $x \in [0, x^*)$. By definition of $x^*$, we have $|Y(\xi)| < r$ for
all $\xi \in [0,x]$, so \eqref{linear_growth} gives
$|G(Y(\xi))| \le C_r |Y(\xi)|$ on $[0, x]$.
Differentiating $W$ and applying the Cauchy--Schwarz inequality:
\[
W'(\xi)
= 2\langle Y(\xi),\, Y'(\xi)\rangle
= 2\langle Y(\xi),\, G(Y(\xi))\rangle
\le 2|Y(\xi)|\,|G(Y(\xi))|
\le 2C_r\, W(\xi).
\]
Gr\"onwall's inequality on $[0,x]$ gives $W(x) \le W(0)\,e^{2C_r x}$, so $|Y(x)| \le |s|\, e^{C_r x}.$ Since $x < x^* \le l_{\max}(s) \le b$ (using \eqref{contra_assumption}),
we have $e^{C_r x} < e^{C_r b}$, and therefore
$$|Y(x)| \le |s|\, e^{C_r x} < |s|\, e^{C_r b} < s_0\, e^{C_r b} < r,$$
where the last inequality uses $s_0 < r\,e^{-C_r b}$.
This proves the claim. \hfill$\square_{\mathrm{claim}}$

\medskip
We now rule out $x^* < l_{\max}(s)$. Suppose $x^* < l_{\max}(s)$.
Passing to the limit $x \uparrow x^*$ in \eqref{Ys_bound} and using
continuity of $Y$:
$$|Y(x^*)| \le |s|\, e^{C_r x^*} < r.$$
Hence $Y(x^*) \in B_r(0)$. By continuity of $Y$, there exists $\tilde{\eta} > 0$ such that $|Y(x)| < r$ for all
$x \in [0,\, x^* + \tilde{\eta}]$, contradicting the definition of $x^*$ as a supremum.

Therefore $x^* = l_{\max}(s)$, and \eqref{Ys_bound} yields
\begin{equation}\label{global_bound}
|Y(x)| < r \qquad \text{for all } x \in [0, l_{\max}(s)).
\end{equation}
The bound \eqref{global_bound} shows that $Y$ remains in the compact set ${B_r(0)}$ on its entire maximal interval $[0, l_{\max}(s))$. Since $G \in C^1(\mathbb{R}^2;\mathbb{R}^2)$, the standard continuation theorem for ODEs states that if a maximal solution remains in a compact set, then it can be extended beyond $l_{\max}(s)$, contradicting maximality.
Therefore the assumption \eqref{contra_assumption} is false, and $l_{\max}(s) > b.$ Consequently, $Y \in C^1\bigl([0,b];\,\mathbb{R}^2\bigr), u \in C^2\bigl([0,b]\bigr).$ Finally, for $0 < |s| < s_0$, the initial condition gives $y_2(0) = u_x(0) = s \neq 0$, 
whereas the trivial solution $u\equiv 0$ satisfies $u_x(0)=0$. Hence the solution $u$ is nontrivial.
\end{proof}

\begin{proof}[Proof of Lemma~\ref{lem:right_ivp}]
The proof is analogous to that of Lemma~\ref{lem:left_ivp}. 
Indeed, after setting $y_1 = u$, $y_2 = u_x$, and $Y = (y_1, y_2)^\top$. The initial-value
problem \eqref{ivp_right} is then equivalent to the first-order system
\begin{equation}
\begin{cases}
Y' = G(Y), \\[1mm]
Y(1) = \begin{pmatrix} 0 \\ \hat{s} \end{pmatrix},
\end{cases}
\end{equation}
where $G$ is same as in the proof of Lemma~\ref{lem:left_ivp}. The Picard-Lindel\"of theorem gives a unique maximal solution 
starting from $x=1$. Repeating
the continuation argument used in the proof of Lemma~\ref{lem:left_ivp}, now
backward from $x=1$ to $x=c$, gives $\hat{s}_0>0$ such that, for every $|\hat{s}|<\hat{s}_0$, the solution is defined on $[c,1]$. Moreover, if $0<|\hat{s}|<\hat{s}_0$, then $u \not \equiv0$; otherwise $u\equiv 0$ would imply
$u_x(1)=0$, contradicting $u_x(1)=\hat{s}\neq0$.
\end{proof}

\subsection{Proof of Theorem~\ref{Thm_Soln_Trans_sys}}
The proof of Theorem~\ref{Thm_Soln_Trans_sys} relies on the Leray-Schauder fixed-point principle.
Let us denote $Q: = (0,T) \times (0,1)$ and assume that $ 0 < \epsilon T \le 1 $. System \eqref{Close_loop_z_eqn_expanded} can be rewritten as
\begin{align}
\begin{cases}
z_t-z_{xx}= \mathcal N(t, z),
& (t,x)\in Q,\\
z(t,0)=z(t,1)=0,
& t\in(0,T),\\
z(0,x)=0,
& x\in(0,1),
\end{cases}
\end{align}
where
\begin{align*}
\mathcal N(t, z)
:={}&
-\alpha \bar u^\delta(\epsilon t,\cdot)\, z_x
+
\Big(
F'(\bar u(\epsilon t,\cdot))
-\alpha\delta \bar u^{\delta-1}(\epsilon t,\cdot)
\bar u_x(\epsilon t,\cdot)
+\xi(\epsilon t)
\Big) z
\\
&-
\alpha\delta
\bar u^{\delta-1}(\epsilon t,\cdot) z\, z_x
-
\alpha
\sum_{j=2}^{\delta}
\binom{\delta}{j}
\widetilde z^j
\bar u^{\delta-j}(\epsilon t,\cdot) z_x
\\
&-
\alpha
\sum_{j=2}^{\delta}
\binom{\delta}{j}
\widetilde z^j
\bar u^{\delta-j}(\epsilon t,\cdot)
\bar u_x(\epsilon t,\cdot)
\\
&+
\Big(
F( z+\bar u(\epsilon t,\cdot))
-F(\bar u(\epsilon t,\cdot))
-F'(\bar u(\epsilon t,\cdot)) z
\Big)
-\epsilon \bar u_\tau(\epsilon t,\cdot).
\end{align*}
In order to apply the Leray-Schauder fixed-point theorem, we consider the homotopy (auxiliary) equation parameterized by $\lambda \in [0,1]$:
\begin{align}
\begin{cases}
z_t-z_{xx}= \mathcal \lambda N(t, z),
& (t,x)\in Q,\\
z(t,0)=z(t,1)=0,
& t\in(0,T),\\
z(0,x)=0,
& x\in(0,1),
\end{cases}
\end{align}
Set $\hat{q} := 2(2\delta + 1)$ and define $\mathcal{X}_T := L^{\hat{q}}(0,T; H_0^1(0,1)).$ Let us consider the mapping $\Phi:\mathcal X_T\times[0,1]\to\mathcal X_T$ such that for $\widetilde z\in\mathcal X_T$ and $\lambda\in[0,1]$, 
$z=\Phi(\widetilde z,\lambda)$
if and only if $z$ is the solution to the linear problem
\begin{align}\label{Phi_linear_problem}
\begin{cases}
z_t-z_{xx}=\lambda\mathcal N(t,\widetilde z),
& (t,x)\in Q,\\
z(t,0)=z(t,1)=0,
& t\in(0,T),\\
z(0,x)=0,
& x\in(0,1).
\end{cases}
\end{align}
Since the mappings $\tau\in[0,1]\mapsto \bar u(\tau,\cdot)$ and $\tau\in[0,1]\mapsto \xi(\tau)$ satisfy
$$\bar u\in C^1([0,1];C^2([0,1])),\quad \xi\in C^1([0,1];\mathcal L(L^2(0,1))).$$
Therefore, there exist positive constants $M$ and $M_\xi$ such that
$$\sup_{\tau\in[0,1]}\big\{
\|\bar u(\tau)\|_{L^\infty(0,1)}
+
\|\bar u_x(\tau)\|_{L^\infty(0,1)}
+
\|\bar u_\tau(\tau)\|_{L^\infty(0,1)}\big\}
\le M,\ \text{and}$$
$$\sup_{\tau\in[0,1]}
\|\xi(\tau)\|_{\mathcal L(L^2(0,1))}
\le M_\xi.$$
Note that, upon introducing the slow-time variable $\tau=\epsilon t$, and whenever no confusion arises, we use the shorthand notation $\bar u:=\bar u(\epsilon t,x)=\bar u(\tau,x), \bar u_x:=\partial_x\bar u(\epsilon t,x), \bar u_\tau:=\partial_\tau\bar u(\epsilon t,x).$ Thus, in the time-dependent problem, $\bar u$ represents the composed function
$$(t,x)\longmapsto \bar u(\epsilon t,x).$$
In particular, by the chain rule, $\partial_t\bigl(\bar u(\epsilon t,x)\bigr)=\epsilon\bar u_\tau(\epsilon t,x)=
\epsilon\bar u_\tau,$ whereas
$$\partial_x\bigl(\bar u(\epsilon t,x)\bigr)
=
\bar u_x(\epsilon t,x)
=
\bar u_x.$$

First we prove the following lemma.
\begin{lem}\label{lem:Phi_well_defined_cont_compact}

 The mapping $\Phi:\mathcal X_T\times[0,1]\to\mathcal X_T$ is well-defined, continuous, and compact.
\end{lem}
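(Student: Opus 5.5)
The plan is to verify the three properties separately. Each one reduces to estimating the right-hand side $\lambda\mathcal N(t,\widetilde z)$ in $L^2(Q)$ and then invoking maximal parabolic regularity for the heat equation with homogeneous Dirichlet data and zero initial datum.

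\textbf{Well-definedness.} Fix $\widetilde z\in\mathcal X_T$. Since $H_0^1(0,1)\hookrightarrow L^\infty(0,1)$, we have $\widetilde z\in L^{\hat q}(0,T;L^\infty(0,1))$ and $\widetilde z_x\in L^{\hat q}(0,T;L^2(0,1))$. We bound each term of $\mathcal N(t,\widetilde z)$ in $L^2(0,1)$ at a.e.\ time, using the uniform bounds $M$ and $M_\xi$:
\begin{itemize}
\item The linear terms $\bar u^\delta\widetilde z_x$, $\bigl(F'(\bar u)-\alpha\delta\bar u^{\delta-1}\bar u_x\bigr)\widetilde z$ and $\xi(\epsilon t)\widetilde z$ are bounded by $C\|\widetilde z\|_{H_0^1}$.
\item The convective terms $\widetilde z^{\,j}\bar u^{\delta-j}\widetilde z_x$ for $1\le j\le\delta$ are bounded by $C\|\widetilde z\|_{H_0^1}^{j+1}$.
\item The terms $\widetilde z^{\,j}\bar u^{\delta-j}\bar u_x$ are bounded by $C\|\widetilde z\|_{H_0^1}^{j}$.
\item By \eqref{Taylor_remainder_bound}, the Taylor remainder is bounded by $C\bigl(\|\widetilde z\|_{H_0^1}^2+\|\widetilde z\|_{H_0^1}^{2\delta+1}\bigr)$.
\item The forcing term $\epsilon\bar u_\tau$ is bounded by $\epsilon M$.
\end{itemize}
The highest power appearing is $2\delta+1$. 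Since $\hat q=2(2\delta+1)$, the function $t\mapsto\|\widetilde z\|_{H_0^1}^{2\delta+1}$ belongs to $L^2(0,T)$, and by Hölder's inequality every lower power does as well. Hence
\[
\|\mathcal N(\cdot,\widetilde z)\|_{L^2(Q)}\le C\Bigl(1+\|\widetilde z\|_{\mathcal X_T}^{2\delta+1}\Bigr).
\]
This bound is the reason for the choice of $\hat q$. Maximal regularity then gives a unique solution
\[
z\in W_T:=L^2(0,T;H^2\cap H_0^1)\cap H^1(0,T;L^2)\subset C([0,T];H_0^1)\subset\mathcal X_T,
\]
with $\|z\|_{W_T}\le C\|\mathcal N(\cdot,\widetilde z)\|_{L^2(Q)}$.

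\textbf{Continuity.} Let $\widetilde z_m\to\widetilde z$ in $\mathcal X_T$ and $\lambda_m\to\lambda$. We estimate $\mathcal N(\cdot,\widetilde z_m)-\mathcal N(\cdot,\widetilde z)$ in $L^2(Q)$ by splitting it into differences of the terms above:
\begin{itemize}
\item Polynomial differences are handled with $a^j-b^j=(a-b)\sum a^{i}b^{j-1-i}$.
\item Products are handled with $a_mb_m-ab=(a_m-a)b_m+a(b_m-b)$; for instance the term $\widetilde z^{\,j}\widetilde z_x$ is split this way.
\end{itemize}
Every resulting piece is bounded by $\|\widetilde z_m-\widetilde z\|_{\mathcal X_T}$ times powers of $\|\widetilde z_m\|_{\mathcal X_T}$ and $\|\widetilde z\|_{\mathcal X_T}$ of total degree at most $2\delta$, using Hölder's inequality in time with the same exponent bookkeeping as before. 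This is the step I expect to be the most delicate: one must check that the exponents close in $L^2(0,T)$, in particular for the highest-order Taylor remainder, where $|F''|$ grows like $|v|^{2\delta-1}$. The linear estimate then yields $\Phi(\widetilde z_m,\lambda_m)\to\Phi(\widetilde z,\lambda)$ in $W_T$, and hence in $\mathcal X_T$.

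\textbf{Compactness.} Bounded sets of $\mathcal X_T\times[0,1]$ are mapped into bounded sets of $W_T$. By the Aubin--Lions--Simon lemma with the compact embedding $H^2\cap H_0^1\hookrightarrow\hookrightarrow H_0^1$, the space $W_T$ embeds compactly into $C([0,T];H_0^1(0,1))$. To see this, note that $W_T$ is bounded in $L^\infty(0,T;H_0^1)$, and it is equicontinuous into $L^2$ because of the $H^1(0,T;L^2)$ bound. Since $C([0,T];H_0^1)$ embeds continuously into $\mathcal X_T$, bounded sets in $W_T$ are relatively compact in $\mathcal X_T$, which proves that $\Phi$ is compact.
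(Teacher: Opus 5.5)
Your well-definedness and continuity arguments are correct and coincide with the paper's proof. The paper's H\"older estimate for $(A_n^\ell+A^\ell)D_n$ in $L^2(0,T)$, $0\le\ell\le 2\delta$, is exactly your exponent bookkeeping. It also covers the Taylor remainder, since the difference of remainders is bounded pointwise by $|a-b|$ times a polynomial of degree at most $2\delta$ in $|a|,|b|$.

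The gap is in the compactness step. $W_T=L^2(0,T;H^2\cap H_0^1)\cap H^1(0,T;L^2)$ does \emph{not} embed compactly into $C([0,T];H_0^1(0,1))$. The Aubin--Lions--Simon lemma gives compactness in $C([0,T];B)$ only from a bound in $L^\infty(0,T;X)$ with $X\hookrightarrow\hookrightarrow B$. $W_T$ controls $H^2$ only in $L^2$ in time. The two facts you cite (a bound in $L^\infty(0,T;H_0^1)$ and equicontinuity into $L^2$) give relative compactness in $C([0,T];L^2)$, or in $C([0,T];H^s)$ for $s<1$, but not for $s=1$.

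Concretely, let $e_k(x)=\sqrt2\sin(k\pi x)$ and $\mu_k=(k\pi)^2$, fix $\psi\in C_c^\infty(0,1)$ with $\psi\not\equiv0$, and set $u_k(t,x)=\mu_k^{-1/2}\psi(\mu_k t)e_k(x)$. For $\mu_kT\ge1$ one has:
\begin{itemize}
\item $u_k(0)=0$;
\item $\|u_k\|_{L^2(0,T;H^2)}\le C\|\psi\|_{L^2}$;
\item $\|\partial_t u_k\|_{L^2(Q)}=\|\psi'\|_{L^2}$.
\end{itemize}
So $(u_k)$ is bounded in $W_T$, and each $u_k$ solves the heat equation with right-hand side bounded in $L^2(Q)$. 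Yet orthogonality of the $e_k$ in $H_0^1$ gives $\sup_t\|u_k(t)-u_l(t)\|_{H_0^1}\ge\|\psi\|_{L^\infty}$ for $k\ne l$. Hence no subsequence converges in $C([0,T];H_0^1)$, and even the linear solution operator from $L^2(Q)$ into $C([0,T];H_0^1)$ fails to be compact.

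The conclusion of the lemma is nonetheless true, and the repair uses precisely that $\hat q<\infty$; in the example above, $\|u_k\|_{\mathcal X_T}^{\hat q}\le C\mu_k^{-1}\to0$. The paper proceeds as follows:
\begin{itemize}
\item Aubin--Lions with $H^2\cap H_0^1\hookrightarrow\hookrightarrow H_0^1\hookrightarrow L^2$ gives a subsequence $z_n\to z$ in $L^2(0,T;H_0^1)$.
\item The continuous embedding $W_T\hookrightarrow C([0,T];H_0^1)$ gives $\sup_t\|z_n(t)\|_{H_0^1}\le K$. The same bound holds a.e.\ for $z$ by weak-$*$ lower semicontinuity.
\item One then interpolates:
\[
\|z_n-z\|_{\mathcal X_T}^{\hat q}\le (2K)^{\hat q-2}\,\|z_n-z\|_{L^2(0,T;H_0^1)}^2\longrightarrow 0 .
\]
\end{itemize}
With your compactness paragraph replaced by this argument, your proof is complete.
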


\begin{proof}
We begin by showing that $\Phi$ is well-defined. Let $\widetilde z\in\mathcal X_T.$ Set $Z(t):=\|\widetilde z(t,\cdot)\|_{H_0^1(0,1)}$. Then $Z\in L^{\hat q}(0,T)$ with $\hat q=2(2\delta+1).$

Using the continuous embedding: $H_0^1(0,1)\hookrightarrow L^\infty(0,1),$ it follows that there exists a constant $C>0$ such that $\|\varphi\|_{L^\infty(0,1)}
\le C\|\varphi\|_{H_0^1(0,1)}$ for all $\varphi\in H_0^1(0,1)$. We first show that $\mathcal N(t,\widetilde z)\in L^2(Q)$.

For the first term $\bar u^\delta(\epsilon t,\cdot) \widetilde z_x(t,\cdot) $, we have
$$\|\bar u^\delta(\epsilon t,\cdot) \widetilde z_x(t,\cdot)\|_{L^2(0,1)}
\le C Z(t).$$
Since $q\ge2$ and $T<\infty$, it follows that $\bar u^\delta \widetilde z_x\in L^2(Q)$.

Next, the lower-order linear terms satisfy
$$\left\|
\left(
F'(\bar u)
-\alpha\delta \bar u^{\delta-1}\bar u_x
\right)\widetilde z
\right\|_{L^2(0,1)}
\le C Z(t),$$
and
$$\|\xi(\epsilon t)\widetilde z(t,\cdot)\|_{L^2(0,1)}
\le C\|\widetilde z(t,\cdot)\|_{L^2(0,1)}
\le C Z(t).$$
Therefore these terms also belong to $L^2(Q)$. We now estimate the derivative nonlinearities. For $1\le j\le\delta$, using the embedding $H_0^1(0,1)\hookrightarrow L^\infty(0,1)$, we get
$$\|\widetilde z^j\widetilde z_x\|_{L^2(0,1)}
\le
\|\widetilde z\|_{L^\infty(0,1)}^j
\|\widetilde z_x\|_{L^2(0,1)}
\le
C Z(t)^{j+1}.$$
Hence
$$\int_0^T
\|\widetilde z^j\widetilde z_x\|_{L^2(0,1)}^2\,dt
\le
C\int_0^T Z(t)^{2(j+1)}\,dt.$$
Since $j\le\delta$, we have $2(j+1)\le 2(\delta+1)\le 2(2\delta+1)=q.$ Thus $\widetilde z^j\widetilde z_x\in L^2(Q)$ for $1\le j\le\delta.$ Therefore all derivative nonlinearities in $\mathcal N(t,\widetilde z)$ belong to $L^2(Q)$.

Next, for the lower-order polynomial terms, for $2\le j\le\delta$, we have
$$\|\widetilde z^j\|_{L^2(0,1)}
\le
\|\widetilde z\|_{L^\infty(0,1)}^{j-1}
\|\widetilde z\|_{L^2(0,1)}
\le
C Z(t)^j.$$
Therefore,
$$\int_0^T
\|\widetilde z^j\|_{L^2(0,1)}^2\,dt
\le
C\int_0^T Z(t)^{2j}\,dt.$$
Since $2j\le 2\delta\le q,$ these terms also belong to $L^2(Q)$.

It remains to estimate the term: $F(\widetilde z+\bar u)-F(\bar u)-F'(\bar u)\widetilde z$. Using \eqref{Taylor_remainder_bound} together with the fact that $\bar u$ is uniformly bounded, there exists a constant $C>0$ such that
$$|F(\widetilde z+\bar u)
-F(\bar u)
-F'(\bar u)\widetilde z|
\le
C\left(|\widetilde z|^2+|\widetilde z|^{2\delta+1}\right).$$
Hence
$$\|F(\widetilde z+\bar u)
-F(\bar u)
-F'(\bar u)\widetilde z\|_{L^2(0,1)}
\le
C\left(Z(t)^2+Z(t)^{2\delta+1}\right).$$
Consequently,
$$\int_0^T
\|F(\widetilde z+\bar u)
-F(\bar u)
-F'(\bar u)\widetilde z\|_{L^2(0,1)}^2\,dt
\le
C\int_0^T
\left(
Z(t)^4+Z(t)^{2(2\delta+1)}
\right)dt.$$
Since $2(2\delta+1)=\hat q$ and $\hat q\ge4$, the right-hand side is finite. Therefore
$$F(\widetilde z+\bar u)
-F(\bar u)
-F'(\bar u)\widetilde z \in L^2(Q).$$
%{\color{red}This is the reason for choosing $\hat q=2(2\delta+1).$}
Finally, $\|\epsilon\bar u_\tau(\epsilon t,\cdot)\|_{L^2(0,1)}
\le C\epsilon.$ Hence $\epsilon\bar u_\tau \in L^2(Q).$

Combining the above estimates, we obtain $\mathcal N(t,\widetilde z)\in L^2(Q).$ Therefore, by the $L^2$-regularity theory for the heat equation with homogeneous Dirichlet boundary condition, problem \eqref{Phi_linear_problem} admits a unique solution satisfying
$$z\in L^2(0,T;H^2(0,1)\cap H_0^1(0,1))\cap C([0,T];H_0^1(0,1)),
\qquad
z_t\in L^2(Q).$$
In particular, $z\in L^\infty(0,T;H_0^1(0,1)).$ Since $T<\infty$, this implies $z\in L^{\hat q}(0,T;H_0^1(0,1))=\mathcal X_T$. Thus $\Phi(\widetilde z,\lambda)\in\mathcal X_T.$ Hence $\Phi$ is well-defined.

Next, we prove that $\Phi$ is continuous. Let
$$\widetilde z_n\to \widetilde z
\quad\text{in }\mathcal X_T \quad \text{and} \quad \lambda_n\to\lambda
\quad\text{in }[0,1].$$
Set $z_n:=\Phi(\widetilde z_n,\lambda_n),$ and $z:=\Phi(\widetilde z,\lambda).$ We prove that $z_n\to z$ in $\mathcal X_T$.

First, we show that 
$$\mathcal N(t,\widetilde z_n)
\to
\mathcal N(t,\widetilde z)
\quad\text{in }L^2(Q).$$
Define 
$$A_n(t):=\|\widetilde z_n(t,\cdot)\|_{H_0^1(0,1)}, A(t):=\|\widetilde z(t,\cdot)\|_{H_0^1(0,1)}, \text{ and }D_n(t):=\|\widetilde z_n(t,\cdot)-\widetilde z(t,\cdot)\|_{H_0^1(0,1)}.$$
Since
$\widetilde z_n\to\widetilde z$
in $L^{\hat q}(0,T;H_0^1(0,1)),$ we have  $D_n\to0$
in $L^{\hat q}(0,T).$ Moreover, $\{A_n\}$ is bounded in $L^q(0,T)$, and $A\in L^{\hat q}(0,T)$. We shall use the following estimate. Let $0\le \ell\le 2\delta$. Then
\begin{align}\label{key_holder_estimate}
\|(A_n^\ell+A^\ell)D_n\|_{L^2(0,T)}
\le
C_T
\left(
\|A_n\|_{L^{\hat q}(0,T)}^\ell
+
\|A\|_{L^{\hat q}(0,T)}^\ell
\right)
\|D_n\|_{L^{\hat q}(0,T)}.
\end{align}
Indeed, since $\hat q=2(2\delta+1)$, we have
$$\frac{\ell+1}{\hat q}\le \frac{2\delta+1}{2(2\delta+1)}=\frac12.$$
Thus $$\frac{\hat q}{\ell+1}\ge2.$$
For $\ell>0$, \text{H\"{o}lder's} inequality gives $$\|A_n^\ell D_n\|_{L^{\hat{q}/(\ell+1)}(0,T)}
\le
\|A_n\|_{L^{\hat q}(0,T)}^\ell
\|D_n\|_{L^{\hat q}(0,T)}.$$
Since $\hat q/(\ell+1)\ge2$ and $T<\infty$, we also have $L^{{\hat q}/(\ell+1)}(0,T)\hookrightarrow L^2(0,T)$. Therefore,
$$\|A_n^\ell D_n\|_{L^2(0,T)}
\le
C_T
\|A_n\|_{L^{\hat q}(0,T)}^\ell
\|D_n\|_{L^{\hat q}(0,T)}.$$
The case $\ell=0$ follows directly from $L^{\hat q}(0,T)\hookrightarrow L^2(0,T)$. The same argument applies to $A^\ell D_n$. Hence \eqref{key_holder_estimate} follows.

We now estimate the difference of each term in $\mathcal N$. The linear terms are immediate. Since the coefficients depending on $\bar u$ are bounded, we have
$$\|\bar u^\delta(\widetilde z_n-\widetilde z)_x\|_{L^2(Q)}
\le
C\|\widetilde z_n-\widetilde z\|_{L^2(0,T;H_0^1)}
\to0.$$
Similarly,
$$\left\|
\left(
F'(\bar u)
-\alpha\delta \bar u^{\delta-1}(\bar u)_x
\right)
(\widetilde z_n-\widetilde z)
\right\|_{L^2(Q)}
\to0.$$
Also,
$$\|\xi(\epsilon t)(\widetilde z_n-\widetilde z)\|_{L^2(Q)}
\le
C\|\widetilde z_n-\widetilde z\|_{L^2(Q)}
\to0.$$
Next, consider the derivative nonlinearities. For $1\le j\le\delta$,
we claim that
$$\widetilde z_n^j(\widetilde z_n)_x
\to
\widetilde z^j\widetilde z_x
\quad\text{in }L^2(Q).$$
Indeed,
$$\widetilde z_n^j(\widetilde z_n)_x
-
\widetilde z^j\widetilde z_x
=
(\widetilde z_n^j-\widetilde z^j)(\widetilde z_n)_x
+
\widetilde z^j\big((\widetilde z_n)_x-\widetilde z_x\big).$$
Using
$$|a^j-b^j|\le C_j(|a|^{j-1}+|b|^{j-1})|a-b|$$
and the embedding
$$H_0^1(0,1)\hookrightarrow L^\infty(0,1),$$
we obtain, for a.e. $t\in(0,T)$,
\begin{align*}
\|\widetilde z_n^j(\widetilde z_n)_x
-
\widetilde z^j\widetilde z_x\|_{L^2(0,1)}
\le
C\left(A_n(t)^j+A(t)^j\right)D_n(t).
\end{align*}
Since $j\le\delta\le 2\delta$, applying
\eqref{key_holder_estimate} with $\ell=j$ gives
$$\|\widetilde z_n^j(\widetilde z_n)_x
-
\widetilde z^j\widetilde z_x\|_{L^2(Q)}
\le
C_T
\left(
\|A_n\|_{L^q}^j+\|A\|_{L^q}^j
\right)
\|D_n\|_{L^q},$$
and since $D_n\to0
\quad\text{in }L^q(0,T),$ we get $\widetilde z_n^j(\widetilde z_n)_x
\to
\widetilde z^j\widetilde z_x$ in $L^2(Q)$.

Now consider the lower-order polynomial terms. For $2\le j\le\delta$, we have
$$|\widetilde z_n^j-\widetilde z^j|
\le
C_j
\left(
|\widetilde z_n|^{j-1}
+
|\widetilde z|^{j-1}
\right)
|\widetilde z_n-\widetilde z|.$$
Therefore,
$$\|\widetilde z_n^j-\widetilde z^j\|_{L^2(0,1)}
\le
C
\left(
A_n(t)^{j-1}
+
A(t)^{j-1}
\right)
D_n(t).$$
Since $j-1\le\delta-1\le2\delta$, using
\eqref{key_holder_estimate} with $\ell=j-1$, we obtain
$\|\widetilde z_n^j-\widetilde z^j\|_{L^2(Q)}
\to0.$ Since $\bar u$ and $\bar u_x$ are bounded, all
lower-order polynomial terms converge in $L^2(Q)$.

It remains to consider the reaction remainder
$$\hat{\mathcal{R}}(t,w):=F(w+\bar u)-F(\bar u)-F'(\bar u)w.$$
Since $F$ is a polynomial of degree $2\delta+1$ and $\bar u$ is uniformly bounded, there exists $C>0$ such that, for all $a,b\in\mathbb R$,
$$|\hat{\mathcal R}(t,a)-\hat{\mathcal R}(t,b)|
\le
C\sum_{m=2}^{2\delta+1}
\left(
|a|^{m-1}+|b|^{m-1}
\right)|a-b|.$$
Taking $a=\widetilde z_n(t,x), b=\widetilde z(t,x),$ and using again $H_0^1(0,1)\hookrightarrow L^\infty(0,1),$ we get
$$\|\hat{\mathcal R}(t,\widetilde z_n)-\hat{\mathcal R}(t,\widetilde z)\|_{L^2(0,1)}
\le
C\sum_{m=2}^{2\delta+1}
\left(
A_n(t)^{m-1}
+
A(t)^{m-1}
\right)
D_n(t).$$
Here $m-1\le 2\delta.$ Therefore, applying \eqref{key_holder_estimate} with $\ell=m-1$, we obtain
$$\|\hat{\mathcal R}(t,\widetilde z_n)-\hat{\mathcal R}(t,\widetilde z)\|_{L^2(Q)}\to0.$$
Combining the above estimates, we conclude that
$$\mathcal N(t,\widetilde z_n)\to\mathcal N(t,\widetilde z)\quad\text{in }L^2(Q).$$
Furthermore,
$$\lambda_n\mathcal N(t,\widetilde z_n)
-
\lambda\mathcal N(t,\widetilde z)
=
(\lambda_n-\lambda)\mathcal N(t,\widetilde z_n)
+
\lambda\left(
\mathcal N(t,\widetilde z_n)
-
\mathcal N(t,\widetilde z)
\right).$$
Since $\{\mathcal N(t,\widetilde z_n)\}$ is bounded in $L^2(Q)$ and $\lambda_n\to\lambda$, we get
$$\lambda_n\mathcal N(t,\widetilde z_n)\to
\lambda\mathcal N(t,\widetilde z)
\quad\text{in }L^2(Q).$$
Now $z_n-z$ solves
\[
\begin{cases}
(z_n-z)_t-(z_n-z)_{xx}
=
\lambda_n\mathcal N(t,\widetilde z_n)
-
\lambda\mathcal N(t,\widetilde z),
& (t,x)\in Q,\\
z_n-z=0,
& t\in(0,T),\ x=0,1,\\
(z_n-z)(0,x)=0,
& x\in(0,1).
\end{cases}
\]
By the $L^2$-regularity estimate for the heat equation, $$z_n-z\to0\ \text{in}\ L^2(0,T;H^2(0,1)\cap H_0^1(0,1))\cap
H^1(0,T;L^2(0,1)).$$
In particular, $z_n\to z$ in $C([0,T];H_0^1(0,1))$. Since $T<\infty$, this implies
\[
z_n\to z
\quad\text{in }L^{\hat q}(0,T;H_0^1(0,1)),
\]
i.e., $z_n\to z$ in $\mathcal X_T$. Hence $\Phi$ is continuous.

Finally, we now show that $\Phi$ is compact. Let $\{(\widetilde z_n,\lambda_n)\}$ be a bounded sequence in $\mathcal X_T\times[0,1].$ Set $z_n:=\Phi(\widetilde z_n,\lambda_n).$ Since $ \{\widetilde{z}_n\} $ is bounded in $\mathcal{X}_T$, the estimates established earlier (in the proof of well-definedness and continuity) imply that $\{\mathcal N(t,\widetilde z_n)\}$ is bounded in $L^2(Q)$. Since $\lambda_n\in[0,1]$, it follows that $\{\lambda_n\mathcal N(t,\widetilde z_n)\}$ is also bounded in $L^2(Q)$.

By the $L^2$-regularity estimate for the heat equation, the sequence $\{z_n\}$ is bounded in $L^2(0,T;H^2(0,1)\cap H_0^1(0,1))\cap H^1(0,T;L^2(0,1))$.
Moreover, $\{z_n\}$ is bounded in $C([0,T];H_0^1(0,1))$.

By the {Aubin-Lions compactness lemma,} 
$$L^2(0,T;H^2(0,1)\cap H^1_0 )
\cap
H^1(0,T;L^2(0,1))
\hookrightarrow\hookrightarrow
L^2(0,T;H_0^1(0,1)).$$
Therefore, up to a subsequence,
$$z_n\to z
\quad\text{strongly in }L^2(0,T;H_0^1(0,1)).$$
Since $\{z_n\}$ is bounded in $C([0,T];H_0^1(0,1))$, there exists $K>0$ such that
$$\|z_n(t)\|_{H_0^1(0,1)}\le K
\qquad
\text{for all }t\in[0,T]$$
and all $n$. Passing to a subsequence if necessary, we may also assume
that
$$z_n\rightharpoonup^\ast z
\quad\text{in }L^\infty(0,T;H_0^1(0,1)),$$
and hence
$$\|z(t)\|_{H_0^1(0,1)}\le K
\qquad
\text{for a.e. }t\in(0,T).$$
Consequently,
$$\|z_n(t)-z(t)\|_{H_0^1(0,1)}\le 2K
\qquad
\text{for a.e. }t\in(0,T).$$
Therefore,
\[
\begin{aligned}
\|z_n-z\|_{\mathcal X_T}^{\hat q}
=
\int_0^T
\|z_n(t)-z(t)\|_{H_0^1(0,1)}^{\hat q}\,dt
\le
(2K)^{\hat{q}-2}
\int_0^T
\|z_n(t)-z(t)\|_{H_0^1(0,1)}^2\,dt.
\end{aligned}
\]
Since $z_n\to z$ strongly in $L^2(0,T;H_0^1(0,1))$, the right-hand side tends to zero. Hence $z_n\to z$ strongly in $\mathcal X_T$. Thus every bounded sequence in the range of $\Phi$ has a convergent subsequence in $\mathcal X_T$. Therefore $\Phi$ is compact.
Consequently,
$$\Phi:\mathcal X_T\times[0,1]\to\mathcal X_T$$
is well-defined, continuous, and compact.
\end{proof}
Next, we establish the following lemma.

\begin{lem}\label{lem:uniform_bound_fixed_points}
We now consider the set of fixed points
$$\widetilde{\Phi} := \left\{ z \in \mathcal{X}_T : \, z = \Phi(z,\lambda)\,\, \text{ for some } \lambda \in [0,1] \right\}.$$
Then there exists a constant $C_T>0$, independent of $\lambda\in[0,1]$ and of the corresponding fixed point $z$, such that
$$\|z\|_{L^\infty(0,T;H_0^1(0,1))}
+
\|z\|_{L^2(0,T;H^2(0,1))}
+
\|z_t\|_{L^2(Q)}
\le C_T.$$
Consequently, $\|z\|_{\mathcal X_T}\le C_T,$ and hence $\widetilde{\Phi}$ is bounded in $\mathcal X_T$.
\end{lem}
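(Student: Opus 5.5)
A fixed point $z=\Phi(z,\lambda)$ is a strong solution of
\[
z_t-z_{xx}=\lambda\mathcal N(t,z),\qquad z(t,0)=z(t,1)=0,\qquad z(0)=0,
\]
with the same regularity as in the proof of Lemma~\ref{lem:Phi_well_defined_cont_compact}. The strategy is to derive a priori estimates that do not depend on $\lambda$, in two stages. First we obtain an $L^2$ energy estimate. Then we obtain an $H_0^1$ estimate by testing the equation with $-z_{xx}$. The only obstruction to closing the estimates is the superlinear nonlinearity, and the point is to exploit the structure of the Burgers and reaction terms so that they either cancel or carry a favourable sign.

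\textbf{Step 1 ($L^2$ estimate).} Multiply the equation by $z$ and integrate over $(0,1)$. The key observation is to return to the original variable $u=z+\bar u(\epsilon t)$, for which the full nonlinearity reads
\[
-\alpha\bigl(u^\delta u_x-\bar u^\delta\bar u_x\bigr)+\bigl(F(u)-F(\bar u)\bigr).
\]
\begin{itemize}
\item \emph{Reaction term.} $F(u)-F(\bar u)=-\beta\bigl(u^{2\delta+1}-\bar u^{2\delta+1}\bigr)+(\text{lower order})$. The leading part gives $-\beta\int(u^{2\delta+1}-\bar u^{2\delta+1})z\le -c\int z^{2\delta+2}+C\int(1+z^2)$, using the monotonicity of odd powers and the boundedness of $\bar u$.
\item \emph{Convection term.} Write $u^\delta u_x=\frac{1}{\delta+1}(u^{\delta+1})_x$ and integrate by parts: $\int(u^{\delta+1}-\bar u^{\delta+1})z_x$. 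Subtracting the exact derivative $\int\frac{1}{\delta+1}(z^{\delta+1})_x\,(\cdots)$ leaves mixed terms bounded by $C\int|z|^{\delta+1}|\bar u|\,|z_x|+\cdots$. By Young's inequality these are absorbed into $\frac12\|z_x\|^2+\frac c2\int z^{2\delta+2}+C\|z\|^2$.
\item \emph{Linear terms.} The terms involving $\xi$, $F'(\bar u)$ and $\epsilon\bar u_\tau$ are bounded linearly.
\end{itemize}
Gr\"onwall's inequality then gives
\[
\|z\|_{L^\infty(0,T;L^2)}+\|z\|_{L^2(0,T;H_0^1)}+\|z\|_{L^{2\delta+2}(Q)}\le C_T,
\]
uniformly in $\lambda\in[0,1]$. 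The factor $\lambda$ only multiplies the nonlinearity, so every sign argument above survives it.

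\textbf{Step 2 ($H_0^1$ and $H^2$ estimate).} Multiply by $-z_{xx}$. This gives
\[
\frac12\frac{d}{dt}\|z_x\|^2+\|z_{xx}\|^2\le \tfrac12\|z_{xx}\|^2+C\|\mathcal N(t,z)\|^2 .
\]
We bound $\|z^\delta z_x\|\le\|z\|_\infty^\delta\|z_x\|$ and use the one-dimensional Agmon/Gagliardo--Nirenberg inequality $\|z\|_\infty^2\le C\|z\|\,\|z_x\|$. This yields
\[
\|z^\delta z_x\|^2\le C\,\|z\|_\infty^{2\delta-2}\,\|z\|_\infty^2\,\|z_x\|^2\le g(t)\,\|z_x\|^2 .
\]
We want $g\in L^1(0,T)$ using only the Step 1 bounds. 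The cleanest route is to bound the factor $\|z\|_\infty^{2\delta}$ through $\|z\|_{L^2}^{\delta}\|z_x\|^{\delta}$ combined with an interpolation between $L^{2\delta+2}$ and $H^2$. For the reaction remainder, $\|z\|_{L^{4\delta+2}}^{4\delta+2}$ is controlled similarly by interpolation, absorbing a small multiple of $\|z_{xx}\|^2$.

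\textbf{Main obstacle and fallback.} Making the exponents close in Step 2 for general $\delta$ is the main obstacle. If the direct interpolation fails, I would first try a higher $L^p$ energy estimate: test Step 1 with $|z|^{p-2}z$, where the same cancellation for the convection term holds, and obtain $z\in L^\infty(0,T;L^p)$ for all $p<\infty$. Knowing $z$ is bounded in every $L^p$ makes all polynomial coefficients in $\|\mathcal N\|^2$ integrable, so Gr\"onwall applies to $\|z_x\|^2$. This gives bounds on $\|z\|_{L^\infty(H_0^1)}$ and $\|z\|_{L^2(H^2)}$. Reading $z_t=z_{xx}+\lambda\mathcal N$ then gives the bound on $\|z_t\|_{L^2(Q)}$. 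Finally, the bound on $\|z\|_{\mathcal X_T}$ follows from $\|z\|_{\mathcal X_T}\le T^{1/\hat q}\|z\|_{L^\infty(H_0^1)}$.
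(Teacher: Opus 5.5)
As written, your argument does not close for general $\delta\in\mathbb N$.

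In Step 1 the reaction dissipation carries the factor $\lambda$. Testing with $z$ therefore gives a bound in $L^\infty(0,T;L^2)\cap L^2(0,T;H_0^1)$ that is uniform in $\lambda$, but only $\lambda\|z\|_{L^{2\delta+2}(Q)}^{2\delta+2}\le C_T$. That is not the unweighted bound you state, and it gives no bound on $\sup_t\|z(t)\|_{L^{2\delta+2}}$.

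With only this information, Step 2 fails once $\delta\ge 3$. Agmon's inequality gives $\|z^\delta z_x\|^2\lesssim \|z\|^{\delta}\|z_x\|^{\delta}\cdot\|z_x\|^2$. Gronwall for $y=\|z_x\|^2$ then requires $\|z_x\|^\delta\in L^1(0,T)$, which $L^2(0,T;H_0^1)$ provides only for $\delta\le 2$. Using $\|z_x\|^2\le\|z\|\,\|z_{xx}\|$ and absorbing into $\|z_{xx}\|^2$ leaves a term $\|z_x\|^{2\delta}$, with the same threshold. Interpolating $\|z\|_\infty$ between $L^{2\delta+2}$ and $H^2$ would close, but only with a sup-in-time $L^{2\delta+2}$ bound, which Step 1 does not supply.

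Your fallback of testing with $|z|^{p-2}z$ is the right idea; with $p=2\delta+2$ it is exactly the paper's Step 1. However, your closing sentence skips what actually closes the $H_0^1$ estimate. Bounds on $\sup_t\|z(t)\|_{L^p}$ for finite $p$ do not bound $\|z\|_\infty$, so they do not turn $\int_0^1|z|^{2\delta}|z_x|^2\,dx$ into $C\|z_x\|^2$. You would still need a H\"older--Gagliardo--Nirenberg step. For example, use $\int_0^1|z|^{2\delta}|z_x|^2\,dx\le\|z\|_{L^{2\delta+2}}^{2\delta}\|z_x\|_{L^{2\delta+2}}^2$, then interpolate $\|z_x\|_{L^{2\delta+2}}$ between $\|z_x\|$ and $\|z_{xx}\|$ and absorb.

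The paper avoids this by keeping the two dissipation terms produced by the $|z|^{2\delta}z$ test.
\begin{enumerate}
\item[(1)] The first is $\int_0^T\!\int_0^1|z|^{2\delta}|z_x|^2\,dx\,dt\le C_T$. Together with $|z|^{2j}\le 1+|z|^{2\delta}$, it puts every convection term $\lambda^2\|a_jz^jz_x\|^2$ in $L^1(0,T)$ directly.
\item[(2)] The second is $\lambda\int_0^T\|z\|_{L^{4\delta+2}}^{4\delta+2}\,dt\le C_T$. Combined with $\lambda^2\le\lambda$ and $\|P(t,\cdot,z)\|^2\lesssim\|z\|^2+\|z\|_{L^{4\delta+2}}^{4\delta+2}$, it handles the reaction term.
\end{enumerate}
The $-z_{xx}$ test then reduces to a linear Gronwall inequality with $L^1$ forcing, for every $\delta$.

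Staying at $p=2\delta+2$ also takes care of the feedback term. The bound $\lambda\bigl|\int_0^1(\xi(\epsilon t)z)\,|z|^{p-2}z\,dx\bigr|\le\lambda M_\xi\|z\|\,\|z\|_{L^{2p-2}}^{p-1}$ can be absorbed by $\lambda\|z\|_{L^{p+2\delta}}^{p+2\delta}$ only when $p\le 2\delta+2$. Your ``every $p<\infty$'' route therefore also needs the fact that $\xi(\tau)$ maps $L^2$ boundedly into $L^\infty$. This holds, because $\xi(\tau)$ has finite rank with range in $H_0^1$, but you must state and use it.
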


\begin{proof}
Let $\lambda\in[0,1]$, and let $z\in\mathcal X_T$ satisfy $z=\Phi(z,\lambda).$ By the definition of $\Phi$ and
Lemma~\ref{lem:Phi_well_defined_cont_compact}, the function $z$ satisfies
\begin{equation}\label{homotopy_fixed_point_equation}
\begin{cases}
z_t-z_{xx}=\lambda\mathcal N(t,z),
& (t,x)\in Q,\\
z(t,0)=z(t,1)=0,
& t\in(0,T),\\
z(0,x)=0,
& x\in(0,1),
\end{cases}
\end{equation}
and has the regularity
\begin{equation}\label{fixed_point_regularities}
\begin{aligned}
z\in{}&
L^2\bigl(0,T;H^2(0,1)\cap H_0^1(0,1)\bigr)
\cap H^1\bigl(0,T;L^2(0,1)\bigr)
\cap C\bigl([0,T];H_0^1(0,1)\bigr).
\end{aligned}
\end{equation}
Throughout the proof, $C>0$ denotes a generic constant independent of $\lambda$ and $z$, which may change from one line to another. The term $\mathcal{N}(t,z)$ can be written as follows:
\begin{equation}\label{compact_form_nonlin}
\mathcal N(t,z)
=
\sum_{j=0}^{\delta}a_j(t,x)z^jz_x
+\xi(\epsilon t)z
+P(t,x,z)
+\mathcal{Q}(t,x),
\end{equation}
where
\begin{align*}
a_0(t,x):=-\alpha\bar u^\delta(\epsilon t,x),\quad a_1(t,x):=
-\alpha\delta\bar u^{\delta-1}(\epsilon t,x)
\end{align*}
and, for $2\le j\le\delta$,
$$a_j(t,x):=
-\alpha\binom{\delta}{j}
\bar u^{\delta-j}(\epsilon t,x).$$
As usual, a sum over an empty index set is understood to be zero. We also set
$$\mathcal Q(t,x):=-\epsilon\bar u_\tau(\epsilon t,x),\quad \text{and}$$
\begin{align}\label{definition_P}
P(t,x,s)
:={}&
F(s+\bar u)-F(\bar u)
-\alpha\delta
\bar u^{\delta-1}\bar u_x\,s-
\alpha
\sum_{j=2}^{\delta}
\binom{\delta}{j}
s^j\bar u^{\delta-j}
\bar u_x .
\end{align}
Since $F(v)=-\beta v^{2\delta+1}+\beta(1+\gamma)v^{\delta+1}-\beta\gamma v$,
the function $s\mapsto P(t,x,s)$ is a polynomial of degree
$2\delta+1$ whose leading coefficient is $-\beta$. Thus,
\begin{equation}\label{polynomial_representation_P}
P(t,x,s)
=
-\beta s^{2\delta+1}
+
\sum_{k=1}^{2\delta}P_k(t,x)s^k,
\end{equation}
where the coefficients $P_k$ satisfy 
$$\max_{1\le k\le2\delta}
\|P_k\|_{L^\infty(Q)}
\le C.$$ 
The regularity of $\bar u$ and the condition $0<\epsilon T\le1$ give
\begin{equation}\label{uniform_coefficient_bounds}
\max_{0\le j\le\delta}
\left(
\|a_j\|_{L^\infty(Q)}
+
\|(a_j)_x\|_{L^\infty(Q)}
\right)
+
\|\mathcal{Q}\|_{L^\infty(Q)}
\le C.
\end{equation}
Moreover, since $\xi$ is continuous on the compact interval $[0,1]$,
\begin{equation}\label{uniform_xi_bound}
\sup_{\tau\in[0,1]}
\|\xi(\tau)\|_{\mathcal L(L^2(0,1))}
\le M_\xi.
\end{equation}
We first justify the test functions used below. From
\eqref{fixed_point_regularities} and the one-dimensional Sobolev embedding $H_0^1(0,1)\hookrightarrow L^\infty(0,1),$ we obtain
$$\|z\|_{L^\infty(Q)}\le C\|z\|_{L^\infty(0,T;H_0^1(0,1))}<\infty.$$
Consequently,
\begin{align*}
    |z|^{2\delta}z\in L^2(0,T;H_0^1(0,1)),\quad \text{and}\quad \partial_x\bigl(|z|^{2\delta}z\bigr)
=
(2\delta+1)|z|^{2\delta}z_x.
\end{align*}
Therefore, $|z|^{2\delta}z$ is an admissible test function in \eqref{homotopy_fixed_point_equation}. The Sobolev chain rule gives
\begin{equation}\label{time_chain_rule}
\int_0^1z_t|z|^{2\delta}z\,dx
=
\frac{1}{2(\delta+1)}
\frac{d}{dt}
\|z(t)\|_{L^{2(\delta+1)}(0,1)}^{2(\delta+1)}
\end{equation}
for almost every $t\in(0,T)$. Similarly, since $z_{xx},z_t\in L^2(Q)$, we may test \eqref{homotopy_fixed_point_equation} by $-z_{xx}$. The
Hilbert-space chain rule yields
\begin{equation}\label{H1_chain_rule}
\int_0^1z_t(-z_{xx})\,dx
=
\frac12\frac{d}{dt}\|z_x(t)\|_{L^2(0,1)}^2
\end{equation}
for almost every $t\in(0,T)$.
We now establish estimates independent of $\lambda$.

\noindent
\textbf{Step 1: An $L^{2(\delta+1)}$-estimate.}

Testing \eqref{homotopy_fixed_point_equation} by
$|z|^{2\delta}z$, and using \eqref{time_chain_rule}, we obtain
\begin{align}\label{first_energy_identity}
&
\frac{1}{2(\delta+1)}
\frac{d}{dt}
\|z(t)\|_{L^{2(\delta+1)}}^{2(\delta+1)}
+
(2\delta+1)
\int_0^1|z|^{2\delta}|z_x|^2\,dx
\nonumber\\
&\qquad
=
\lambda\sum_{j=0}^{\delta}I_j(t)
+\lambda I_\xi(t)
+\lambda I_P(t)
+\lambda I_\mathcal{Q}(t),
\end{align}
where
$$I_j(t):=
\int_0^1a_j(t,x)z^jz_x|z|^{2\delta}z\,dx, \quad I_\xi(t)
:=
\int_0^1(\xi(\epsilon t)z)|z|^{2\delta}z\,dx,$$
$$I_P(t):=
\int_0^1P(t,x,z)|z|^{2\delta}z\,dx,\quad \text{and} \quad I_\mathcal{Q}(t):=\int_0^1 \mathcal{Q}|z|^{2\delta}z\,dx$$.
For $0\le j\le\delta$, define
$$G_j(s):=\int_0^s r^j|r|^{2\delta}r\,dr.$$
Then $G_j'(s)=s^j|s|^{2\delta}s$ and $|G_j(s)|\le C|s|^{2\delta+j+2}$. Therefore,
$$I_j(t)=\int_0^1a_j(t,x)\frac{d}{dx}G_j(z)\,dx.$$
Since $z(t,0)=z(t,1)=0$, we have $G_j(z(t,0))=G_j(z(t,1))=0$. Integration by parts and \eqref{uniform_coefficient_bounds} give
$$I_j(t)=-\int_0^1(a_j)_x(t,x)G_j(z)\,dx$$ 
and consequently
$$|I_j(t)|\le C\int_0^1|z|^{2\delta+j+2}\,dx.$$
Since $j\le\delta$, we get $2\delta+j+2\le3\delta+2<4\delta+2$. Thus, for every $\eta>0$, there exists $C_\eta>0$ such that $|s|^{2\delta+j+2}\le\eta|s|^{4\delta+2}+C_\eta$ for all $s\in\mathbb R$. Hence
\begin{equation}\label{first_convection_estimate}
\lambda\sum_{j=0}^{\delta}|I_j(t)|
\le
C\lambda\eta
\|z(t)\|_{L^{4\delta+2}}^{4\delta+2}
+
C_\eta\lambda.
\end{equation}
From \eqref{polynomial_representation_P}, we have
\begin{align*}
I_P(t)
={}&
-\beta\int_0^1|z|^{4\delta+2}\,dx
+
\sum_{k=1}^{2\delta}
\int_0^1p_k(t,x)z^k|z|^{2\delta}z\,dx.
\end{align*}
For $1\le k\le2\delta$, we have $\bigl|z^k|z|^{2\delta}z\bigr|\le|z|^{2\delta+k+1},$ and
$2\delta+k+1\le4\delta+1<4\delta+2$. Therefore,
$$|z|^{2\delta+k+1}\le\eta|z|^{4\delta+2}+C_\eta.$$
It follows that
\begin{equation}\label{first_reaction_estimate}
I_P(t)
\le
-(\beta-C\eta)
\|z(t)\|_{L^{4\delta+2}}^{4\delta+2}
+C_\eta.
\end{equation}
For the feedback term, using \eqref{uniform_xi_bound}, we obtain
\begin{align*}
|I_\xi(t)|
\le
\|\xi(\epsilon t)z\|_{L^2}
\||z|^{2\delta}z\|_{L^2}
\le
M_\xi
\|z\|_{L^2}
\|z\|_{L^{4\delta+2}}^{2\delta+1}.
\end{align*}
Young's inequality gives
\begin{equation}\label{first_feedback_estimate}
|I_\xi(t)|
\le
\eta\|z\|_{L^{4\delta+2}}^{4\delta+2}
+
C_\eta\|z\|_{L^2}^2.
\end{equation}
Since the interval $(0,1)$ has measure one, $\|z\|_{L^2}^2\le\|z\|_{L^{2(\delta+1)}}^2\le1+\|z\|_{L^{2(\delta+1)}}^{2(\delta+1)}$. Thus,
\begin{equation}\label{first_feedback_estimate_two}
|I_\xi(t)|
\le
\eta\|z\|_{L^{4\delta+2}}^{4\delta+2}
+
C_\eta
\left(
1+
\|z\|_{L^{2(\delta+1)}}^{2(\delta+1)}
\right).
\end{equation}
Similarly,
\begin{align*}
|I_\mathcal{Q}(t)|
\le
\|\mathcal{Q}(t)\|_{L^2}
\||z|^{2\delta}z\|_{L^2}
=
\|\mathcal{Q}(t)\|_{L^2}
\|z\|_{L^{4\delta+2}}^{2\delta+1}.
\end{align*}
By Young's inequality and \eqref{uniform_coefficient_bounds},
\begin{equation}\label{first_forcing_estimate}
|I_h(t)|
\le
\eta\|z\|_{L^{4\delta+2}}^{4\delta+2}
+C_\eta.
\end{equation}
Combining
\eqref{first_energy_identity}--\eqref{first_forcing_estimate}, and
choosing $\eta>0$ sufficiently small, we find constants
$c_1,c_2,C>0$, independent of $\lambda$, such that
\begin{align}\label{first_energy_inequality}
\frac{d}{dt}
\|z(t)\|_{L^{2(\delta+1)}}^{2(\delta+1)}
+
c_1
\int_0^1|z|^{2\delta}|z_x|^2\,dx
+
c_2\lambda
\|z(t)\|_{L^{4\delta+2}}^{4\delta+2}
\le
C\left(
1+
\|z(t)\|_{L^{2(\delta+1)}}^{2(\delta+1)}
\right).
\end{align}
Since $z(0)=0$, Gronwall's inequality yields
\begin{equation}\label{first_uniform_bound}
\sup_{0\le t\le T}
\|z(t)\|_{L^{2(\delta+1)}}^{2(\delta+1)}
\le C_T.
\end{equation}
Integrating \eqref{first_energy_inequality} over $(0,T)$, we obtain
\begin{equation}\label{weighted_gradient_bound}
\int_0^T\int_0^1
|z|^{2\delta}|z_x|^2\,dx\,dt
\le C_T
\end{equation}
and
\begin{equation}\label{high_power_bound}
\lambda
\int_0^T
\|z(t)\|_{L^{4\delta+2}}^{4\delta+2}\,dt
\le C_T.
\end{equation}

\noindent
\textbf{Step 2: An $H_0^1$-estimate.}

Testing \eqref{homotopy_fixed_point_equation} by $-z_{xx}$, and using
\eqref{H1_chain_rule}, we obtain
\begin{align}\label{second_energy_identity}
\frac12\frac{d}{dt}\|z_x(t)\|_{L^2}^2
+
\|z_{xx}(t)\|_{L^2}^2
&=
-\lambda
\sum_{j=0}^{\delta}
\int_0^1a_jz^jz_xz_{xx}\,dx
-\lambda(\xi(\epsilon t)z,z_{xx})_{L^2}
\nonumber\\
&\qquad
-\lambda(P(t,\cdot,z),z_{xx})_{L^2}
-\lambda(h,z_{xx})_{L^2}.
\end{align}
For $0\le j\le\delta$, Young's inequality gives
\begin{align*}
\lambda
\left|
\int_0^1a_jz^jz_xz_{xx}\,dx
\right|
&\le
C\lambda
\left(
\int_0^1|z|^{2j}|z_x|^2\,dx
\right)^{1/2}
\|z_{xx}\|_{L^2}
\\
&\le
\eta\|z_{xx}\|_{L^2}^2
+
C_\eta\lambda^2
\int_0^1|z|^{2j}|z_x|^2\,dx.
\end{align*}
Since $|z|^{2j}\le1+|z|^{2\delta}$ for $0\le j\le\delta$, and $0\le\lambda\le1$, summing over $j$ gives
\begin{align}\label{second_convection_estimate}
\lambda
\sum_{j=0}^{\delta}
\left|
\int_0^1a_jz^jz_xz_{xx}\,dx
\right|
\le
\eta\|z_{xx}\|_{L^2}^2
+
C_\eta\|z_x\|_{L^2}^2
+
C_\eta
\int_0^1|z|^{2\delta}|z_x|^2\,dx.
\end{align}
Here $\eta>0$ has been adjusted to account for the finite sum. For the feedback term, using \eqref{uniform_xi_bound}, Young's inequality, $0\le\lambda\le1$, and Poincaré's inequality, we have
\begin{align*}
\lambda
\big|(\xi(\epsilon t)z,z_{xx})_{L^2}\big|
\le
C\lambda\|z\|_{L^2}\|z_{xx}\|_{L^2}
&\le
\eta\|z_{xx}\|_{L^2}^2
+
C_\eta\lambda^2\|z\|_{L^2}^2
\\
&\le
\eta\|z_{xx}\|_{L^2}^2
+
C_\eta\|z_x\|_{L^2}^2.
\end{align*}
Thus,
\begin{equation}\label{second_feedback_estimate}
\lambda
\big|(\xi(\epsilon t)z,z_{xx})_{L^2}\big|
\le
\eta\|z_{xx}\|_{L^2}^2
+
C_\eta\|z_x\|_{L^2}^2.
\end{equation}
For the polynomial reaction term,
\begin{equation}\label{second_reaction_estimate}
\lambda
\big|(P(t,\cdot,z),z_{xx})_{L^2}\big|
\le
\eta\|z_{xx}\|_{L^2}^2
+
C_\eta\lambda^2
\|P(t,\cdot,z)\|_{L^2}^2.
\end{equation}
By \eqref{polynomial_representation_P}, we get $|P(t,x,z)|
\le C\left(|z|+|z|^{2\delta+1}\right).$ Hence
\begin{equation}\label{P_L2_estimate}
\|P(t,\cdot,z)\|_{L^2}^2
\le
C\left(
\|z(t)\|_{L^2}^2
+
\|z(t)\|_{L^{4\delta+2}}^{4\delta+2}
\right).
\end{equation}
Using $\lambda^2\le\lambda$, \eqref{first_uniform_bound}, and \eqref{high_power_bound}, we get
\begin{align}
\lambda^2
\int_0^T
\|P(t,\cdot,z)\|_{L^2}^2\,dt
&\le
C\lambda^2
\int_0^T\|z(t)\|_{L^2}^2\,dt
+
C\lambda^2
\int_0^T
\|z(t)\|_{L^{4\delta+2}}^{4\delta+2}\,dt
\nonumber\\
&\le
C_T
+
C\lambda
\int_0^T
\|z(t)\|_{L^{4\delta+2}}^{4\delta+2}\,dt
\nonumber\\
&\le C_T.
\label{P_integral_estimate}
\end{align}
For the forcing term, Young's inequality and
\eqref{uniform_coefficient_bounds} give
\begin{align}\label{second_forcing_estimate}
\lambda\big|(\mathcal{Q},z_{xx})_{L^2}\big|
\le
\eta\|z_{xx}\|_{L^2}^2
+
C_\eta\lambda^2\|\mathcal{Q}\|_{L^2}^2
\le
\eta\|z_{xx}\|_{L^2}^2+C_\eta.
\end{align}
Combining
\eqref{second_energy_identity}--\eqref{second_forcing_estimate}, and choosing $\eta>0$ sufficiently small, we obtain
\begin{align}\label{second_energy_inequality}
\frac{d}{dt}\|z_x(t)\|_{L^2}^2
+
\|z_{xx}(t)\|_{L^2}^2
\le{}&
C\|z_x(t)\|_{L^2}^2
+
C\int_0^1|z|^{2\delta}|z_x|^2\,dx
 +
C\lambda^2
\|P(t,\cdot,z)\|_{L^2}^2
+C.
\end{align}
Since $z(0)=0$ in $H_0^1(0,1)$, we have $z_x(0)=0$ in $L^2(0,1)$. Applying Gronwall's inequality to
\eqref{second_energy_inequality}, and using
\eqref{weighted_gradient_bound} and
\eqref{P_integral_estimate}, we conclude that
\begin{equation}\label{H1_H2_uniform_estimate}
\sup_{0\le t\le T}
\|z_x(t)\|_{L^2}^2
+
\int_0^T\|z_{xx}(t)\|_{L^2}^2\,dt
\le C_T.
\end{equation}
By Poincaré's inequality and the elliptic estimate
$$\|v\|_{H^2(0,1)}
\le
C\|v_{xx}\|_{L^2(0,1)},
\qquad
v\in H^2(0,1)\cap H_0^1(0,1),$$
we obtain
\begin{equation}\label{space_uniform_estimate}
\|z\|_{L^\infty(0,T;H_0^1(0,1))}
+
\|z\|_{L^2(0,T;H^2(0,1))}
\le C_T.
\end{equation}

\noindent
\textbf{Step 3: An estimate for $z_t$.}

From \eqref{homotopy_fixed_point_equation} and
\eqref{compact_form_nonlin},
$$z_t=z_{xx}+\lambda
\left(
\sum_{j=0}^{\delta}a_jz^jz_x
+\xi(\epsilon t)z
+P(t,x,z)
+\mathcal{Q}
\right).$$
Using the one-dimensional embedding $H_0^1(0,1)\hookrightarrow L^\infty(0,1)$ and \eqref{space_uniform_estimate}, for $0\le j\le\delta$, we have
\begin{align*}
\int_0^T
\|z^jz_x\|_{L^2}^2\,dt
\le
\sup_{0\le t\le T}
\|z(t)\|_{L^\infty}^{2j}
\int_0^T\|z_x(t)\|_{L^2}^2\,dt
\le C_T.
\end{align*}
Consequently,
$$\lambda^2
\int_0^T
\left\|
\sum_{j=0}^{\delta}a_jz^jz_x
\right\|_{L^2}^2\,dt
\le C_T.$$
Moreover, by \eqref{uniform_xi_bound},
$$\lambda^2
\int_0^T
\|\xi(\epsilon t)z(t)\|_{L^2}^2\,dt
\le
M_\xi^2
\int_0^T\|z(t)\|_{L^2}^2\,dt
\le C_T.$$
By \eqref{P_integral_estimate},
$$\lambda^2
\int_0^T
\|P(t,\cdot,z)\|_{L^2}^2\,dt
\le C_T.$$
Finally, using $0\le\lambda\le1$ and
\eqref{uniform_coefficient_bounds},
$$\lambda^2
\int_0^T\|h(t)\|_{L^2}^2\,dt
\le
\int_0^T\|h(t)\|_{L^2}^2\,dt
\le C_T.$$
Combining these estimates with
\eqref{H1_H2_uniform_estimate}, we obtain
\begin{equation}\label{time_derivative_uniform_estimate}
\|z_t\|_{L^2(Q)}
\le C_T.
\end{equation}
It follows from
\eqref{space_uniform_estimate} and
\eqref{time_derivative_uniform_estimate} that
$$\|z\|_{L^\infty(0,T;H_0^1(0,1))}
+
\|z\|_{L^2(0,T;H^2(0,1))}
+
\|z_t\|_{L^2(Q)}
\le C_T,$$
where $C_T$ is independent of $\lambda\in[0,1]$ and of the
corresponding fixed point $z$. Finally,
\begin{align*}
\|z\|_{\mathcal X_T}^{\hat q}
&=
\int_0^T
\|z(t)\|_{H_0^1(0,1)}^{\hat q}\,dt
\le
T
\left(
\sup_{0\le t\le T}
\|z(t)\|_{H_0^1(0,1)}
\right)^{\hat q}
\le C_T.
\end{align*}
Therefore, $\|z\|_{\mathcal X_T}\le C_T.$ Thus, $\widetilde{\Phi}$ is bounded in $\mathcal X_T$.
\end{proof}
We are now in a position to prove Theorem~\ref{Thm_Soln_Trans_sys}
\begin{proof}[Proof of Theorem~\ref{Thm_Soln_Trans_sys}]
By Lemma~\ref{lem:Phi_well_defined_cont_compact}, the mapping
$$\Phi:\mathcal X_T\times[0,1]\longrightarrow\mathcal X_T$$
is continuous and compact. Moreover, setting $\lambda=0$ in \eqref{Phi_linear_problem} shows that $\Phi(\widetilde z,0)$ solves the homogeneous heat equation with zero initial and boundary data, 
so that
$$\Phi(\widetilde z,0)=0
\qquad
\text{for every }\widetilde z\in\mathcal X_T.$$
By Lemma~\ref{lem:uniform_bound_fixed_points}, the set $\widetilde{\Phi}$ is bounded in $\mathcal X_T$. Hence, by the Leray-Schauder fixed-point theorem, there exists $z\in\mathcal X_T$ such that $z=\Phi(z,1)$. Setting $\lambda=1$ and $\widetilde z=z$ in \eqref{Phi_linear_problem}, this fixed point is a solution of the original nonlinear closed-loop 
problem \eqref{Close_loop_z_eqn_expanded}. Furthermore, by the regularity established in Lemma~\ref{lem:Phi_well_defined_cont_compact},
$$z\in
L^2\bigl(0,T;H^2(0,1)\cap H_0^1(0,1)\bigr)
\cap
H^1\bigl(0,T;L^2(0,1)\bigr)
\cap
C\bigl([0,T];H_0^1(0,1)\bigr).$$
It remains to establish the uniqueness of the solution. Let $z_1,z_2\in\mathcal X_T$ be two fixed points of
$\Phi(\cdot,1)$. By the definition of $\Phi$, both $z_1$ and $z_2$ solve the original nonlinear problem and satisfy
$$z_i(t,0)=z_i(t,1)=0,\qquad z_i(0,x)=0,\qquad i=1,2.$$
Moreover, by Lemma~\ref{lem:Phi_well_defined_cont_compact},
$$z_i\in
L^2\bigl(0,T;H^2(0,1)\cap H_0^1(0,1)\bigr)
\cap
H^1\bigl(0,T;L^2(0,1)\bigr)
\cap
C\bigl([0,T];H_0^1(0,1)\bigr),$$
for $i=1,2$. In particular, by the one-dimensional Sobolev embedding, $z_1,z_2\in L^\infty(Q)$. Set $w:=z_1-z_2$. Then 
$$w(t,0)=w(t,1)=0,
\qquad
w(0,x)=0,$$
and $w$ satisfies
\begin{equation}\label{difference_equation}
\begin{cases}
w_t-w_{xx}
=
\displaystyle
\sum_{j=0}^{\delta}
a_j(t,x)
\left(
z_1^j(z_1)_x-z_2^j(z_2)_x
\right)
+\xi(\epsilon t)w
\\
\hspace{3.6cm}
+P(t,x,z_1)-P(t,x,z_2),
& (t,x)\in Q,\\
w(t,0)=w(t,1)=0,
& t\in(0,T),\\
w(0,x)=0,
& x\in(0,1).
\end{cases}
\end{equation}
Taking the $L^2(0,1)$-inner product of \eqref{difference_equation} with $w$, we obtain, for almost every $t\in(0,T)$,
\begin{align}\label{difference_energy_identity}
\frac12\frac{d}{dt}\|w(t)\|_{L^2}^2
+
\|w_x(t)\|_{L^2}^2
={}&
\sum_{j=0}^{\delta}
\int_0^1
a_j
\left(
z_1^j(z_1)_x-z_2^j(z_2)_x
\right)w\,dx
\nonumber\\
&+
(\xi(\epsilon t)w,w)_{L^2}
\nonumber\\
&+
(P(t,\cdot,z_1)-P(t,\cdot,z_2),w)_{L^2}.
\end{align}
We first estimate the convection terms. For $0\le j\le\delta$,
$$z_1^j(z_1)_x-z_2^j(z_2)_x=\frac{1}{j+1}
\partial_x\left(z_1^{j+1}-z_2^{j+1}\right).$$
Therefore, integration by parts gives
\begin{align*}
\int_0^1
a_j
\left(
z_1^j(z_1)_x-z_2^j(z_2)_x
\right)w\,dx
=&
-\frac{1}{j+1}
\int_0^1
(a_j)_x
\left(z_1^{j+1}-z_2^{j+1}\right)w\,dx
\\
&\qquad \qquad
-\frac{1}{j+1}
\int_0^1
a_j
\left(z_1^{j+1}-z_2^{j+1}\right)w_x\,dx.
\end{align*}
The boundary terms vanish because $w(t,0)=w(t,1)=0$. Put
$$M:=
\max\left\{
\|z_1\|_{L^\infty(Q)},
\|z_2\|_{L^\infty(Q)}
\right\}.$$
Using
$$|r^{j+1}-s^{j+1}|\le C_j\left(|r|^j+|s|^j\right)|r-s|,$$
we obtain $|z_1^{j+1}-z_2^{j+1}|\le C_jM^j|w|\le C_M|w|$. Since $a_j$ and $(a_j)_x$ are bounded in $L^\infty(Q)$, it follows that
\begin{align*}
\left|
\int_0^1
a_j
\left(
z_1^j(z_1)_x-z_2^j(z_2)_x
\right)w\,dx
\right|
\le
C_M\|w\|_{L^2}^2
+
C_M\|w\|_{L^2}\|w_x\|_{L^2}.
\end{align*}
By Young's inequality, for every $\eta>0$,
$$\left|
\int_0^1
a_j
\left(
z_1^j(z_1)_x-z_2^j(z_2)_x
\right)w\,dx
\right|
\le
\eta\|w_x\|_{L^2}^2
+
C_{M,\eta}\|w\|_{L^2}^2.$$
Summing over $j=0,\ldots,\delta$ and choosing $\eta>0$
sufficiently small, we obtain
\begin{align}\label{difference_convection_estimate}
\left|
\sum_{j=0}^{\delta}
\int_0^1
a_j
\left(
z_1^j(z_1)_x-z_2^j(z_2)_x
\right)w\,dx
\right|
\le
\frac12\|w_x\|_{L^2}^2
+
C_M\|w\|_{L^2}^2.
\end{align}
For the feedback term, the uniform boundedness of $\xi$ gives
\begin{align}\label{difference_feedback_estimate}
|(\xi(\epsilon t)w,w)_{L^2}|
\le
\|\xi(\epsilon t)w\|_{L^2}\|w\|_{L^2}
\le
M_\xi\|w\|_{L^2}^2.
\end{align}
It remains to estimate the polynomial term. Recall that
$$P(t,x,s)=-\beta s^{2\delta+1}+ \sum_{k=1}^{2\delta}P_k(t,x)s^k,$$
where the coefficients $P_k$ are uniformly bounded in $Q$. Hence
$$\partial_sP(t,x,s)
=
-(2\delta+1)\beta s^{2\delta}
+
\sum_{k=1}^{2\delta}
k\,P_k(t,x)s^{k-1}.$$
Consequently,
$$\sup_{\substack{(t,x)\in Q\\ |s|\le M}}
|\partial_sP(t,x,s)|
\le
C\left(1+M^{2\delta}\right)
=:C_M.$$
By the mean value theorem,
\[
|P(t,x,z_1)-P(t,x,z_2)|
\le
C_M|z_1-z_2|
=
C_M|w|.
\]
Therefore,
\begin{align}\label{difference_reaction_estimate}
\left|
(P(t,\cdot,z_1)-P(t,\cdot,z_2),w)_{L^2}
\right|
\le
C_M\|w\|_{L^2}^2.
\end{align}
Combining
\eqref{difference_energy_identity},
\eqref{difference_convection_estimate},
\eqref{difference_feedback_estimate}, and
\eqref{difference_reaction_estimate}, we obtain
$$\frac12\frac{d}{dt}\|w(t)\|_{L^2}^2
+
\frac12\|w_x(t)\|_{L^2}^2
\le
C_M\|w(t)\|_{L^2}^2.$$
After multiplying by $2$ and dropping the nonnegative gradient term, we get
$$\frac{d}{dt}\|w(t)\|_{L^2}^2
\le
C_M\|w(t)\|_{L^2}^2.$$
Since both $z_1$ and $z_2$ are fixed points of $\Phi(\cdot,1)$, their initial conditions are zero. Hence $w(0)=z_1(0)-z_2(0)=0$. Gronwall's inequality therefore yields
$$\|w(t)\|_{L^2}^2=0,
\qquad t\in[0,T].$$
Thus $w\equiv0$ in $Q$ and consequently $z_1=z_2$. This completes the proof of Theorem~\ref{Thm_Soln_Trans_sys}.
\end{proof}

\subsection{Proof of Theorem~\ref{Regularity_of_soln}}

\begin{proof}[Proof of Theorem~\ref{Regularity_of_soln}]
Let $T>0$, $0<\epsilon T\leq 1$, and
$$\bar u\in C^1\bigl([0,1];C^2([0,1])\bigr),
\qquad
\bar u(\tau,0)=\bar u(\tau,1)=0,
\quad \tau\in[0,1],$$
and $\xi\in C\left([0,1];\mathcal L\bigl(L^2(0,1),H_0^1(0,1)\bigr)\right).$
%Let $T>0$ and let $\epsilon>0$ be such that $0<\epsilon T\leq 1$. Assume that $\bar u\in C^1\bigl([0,1];C^2([0,1])\bigr), \bar u(\tau,0)=\bar u(\tau,1)=0$ for all $\tau\in[0,1],$ and $\xi\in C\left([0,1];\mathcal L\bigl(L^2(0,1),H_0^1(0,1)\bigr)\right).$
Let $z$ be the strong solution of
\begin{equation}\label{Eq_Closed_loop_bootstrap}
\begin{cases}
z_t(t)
=
\mathcal A(\epsilon t)z(t)
+
\mathcal G_\epsilon(t,z(t)),
& t\in(0,T),\\
z(0)=0,
\end{cases}
\end{equation}
satisfying
\begin{equation}\label{Eq_Initial_regularities_z}
z\in
L^2\bigl(0,T;H^2(0,1)\cap H_0^1(0,1)\bigr)
\cap
H^1\bigl(0,T;L^2(0,1)\bigr)
\cap
C\bigl([0,T];H_0^1(0,1)\bigr).
\end{equation}
Set $\mathcal H:=L^2(0,1),\mathcal V:=H_0^1(0,1), D:=H^2(0,1)\cap H_0^1(0,1).$
For $\tau\in[0,1]$, define 
$$\mathcal A(\tau)v:=v_{xx}+p(\tau,\cdot)v_x+q(\tau,\cdot)v,\quad \text{and}\quad D(\mathcal A(\tau))=D,$$
where $p(\tau,\cdot):=-\alpha\bar u^\delta(\tau,\cdot)$
and $q(\tau,\cdot):=F'\bigl(\bar u(\tau,\cdot)\bigr)-
\alpha\delta\bar u^{\delta-1}(\tau,\cdot)\bar u_x(\tau,\cdot).$ The remaining terms are collected in
\begin{align}\label{Eq_Definition_G_epsilon_bootstrap}
\mathcal G_\epsilon(t,z)
={}&
\xi(\epsilon t)z
-
\alpha
\left(
\delta\bar u^{\delta-1}(\epsilon t,\cdot)z
+
\sum_{j=2}^{\delta}
\binom{\delta}{j}
z^j\bar u^{\delta-j}(\epsilon t,\cdot)
\right)z_x
\nonumber\\
&-
\alpha
\left(
\sum_{j=2}^{\delta}
\binom{\delta}{j}
z^j\bar u^{\delta-j}(\epsilon t,\cdot)
\right)
\bar u_x(\epsilon t,\cdot)+
F\bigl(z+\bar u(\epsilon t,\cdot)\bigr)
\nonumber\\
&-
F\bigl(\bar u(\epsilon t,\cdot)\bigr)
-
F'\bigl(\bar u(\epsilon t,\cdot)\bigr)z-\epsilon\bar u_\tau(\epsilon t,\cdot).
\end{align}
A sum over an empty index set is understood to be zero.

\noindent
\textbf{Step 1: Realization of $\mathcal A(\tau)$ in
$\mathcal V$.}

Let $\mathcal A_{\mathcal V}(\tau)$ denote the part of
$\mathcal A(\tau)$ in $\mathcal V$, namely,
\[
D\bigl(\mathcal A_{\mathcal V}(\tau)\bigr)
:=
\left\{
v\in D:
\mathcal A(\tau)v\in\mathcal V
\right\},
\qquad
\mathcal A_{\mathcal V}(\tau)v
:=
\mathcal A(\tau)v.
\]
We first show that
\begin{equation}\label{Eq_Common_domain_Lambda}
D\bigl(\mathcal A_{\mathcal V}(\tau)\bigr)
=
\Lambda,
\qquad
\tau\in[0,1].
\end{equation}

Let $v\in D\bigl(\mathcal A_{\mathcal V}(\tau)\bigr)$.
Then $v\in H^2(0,1)\cap H_0^1(0,1)$ and $\mathcal A(\tau)v\in H_0^1(0,1).$ Since $v_{xx}=\mathcal A(\tau)v-p(\tau,\cdot)v_x-q(\tau,\cdot)v$ and $p(\tau,\cdot),q(\tau,\cdot)\in W^{1,\infty}(0,1),$ the right-hand side belongs to $H^1(0,1)$. Consequently, $v_{xx}\in H^1(0,1),$ and hence $v\in H^3(0,1).$ Moreover, since $\mathcal A(\tau)v\in H_0^1(0,1),$ we have
$$\mathcal A(\tau)v(0)=\mathcal A(\tau)v(1)=0.$$
The boundary conditions on $\bar u$ imply $p(\tau,0)=p(\tau,1)=0.$ Since $v(0)=v(1)=0$, the term $q(\tau,\cdot)v$ also vanishes at the boundary. Therefore,
$\mathcal A(\tau)v\big|_{x=0,1}=v_{xx}\big|_{x=0,1},$
and thus $v_{xx}(0)=v_{xx}(1)=0$. Hence $v\in\Lambda$.

Conversely, let $v\in\Lambda$. Then $v\in H^3(0,1)\cap H_0^1(0,1).$ Since $p(\tau,\cdot),q(\tau,\cdot)\in W^{1,\infty}(0,1),$ we have
$$\mathcal A(\tau)v=v_{xx}+p(\tau,\cdot)v_x+q(\tau,\cdot)v
\in H^1(0,1).$$
At $x=0,1$, we have $v=0,v_{xx}=0, p(\tau,\cdot)=0.$
Hence
$$\mathcal A(\tau)v(0)=\mathcal A(\tau)v(1)=0.$$
Therefore, $\mathcal A(\tau)v\in H_0^1(0,1),$ and thus $v\in D\bigl(\mathcal A_{\mathcal V}(\tau)\bigr).$ This proves \eqref{Eq_Common_domain_Lambda}.

We next verify the regularity and sectoriality of the operator family.
The assumption
$$\bar u\in C^1\bigl([0,1];C^2([0,1])\bigr)\quad\text{implies}\quad p,q\in C^1\bigl([0,1];W^{1,\infty}(0,1)\bigr).$$
Consequently,
\begin{equation}\label{Eq_A_C1_Lambda_V}
\tau
\longmapsto
\mathcal A_{\mathcal V}(\tau)
\in
C^1\bigl([0,1];\mathcal L(\Lambda,\mathcal V)\bigr).
\end{equation}

Let $\mathcal A_{0,\mathcal V}$ denote the part of the Dirichlet Laplacian in $\mathcal V$, namely, 
$\mathcal A_{0,\mathcal V}v=v_{xx}, D(\mathcal A_{0,\mathcal V})=\Lambda.$
The operator $\mathcal A_{0,\mathcal V}$ is sectorial on
$\mathcal V$ and generates an analytic semigroup. Moreover, its
graph norm is equivalent to the $H^3(0,1)$-norm on $\Lambda$. For $v\in\Lambda$,
$$\bigl\|p(\tau,\cdot)v_x+q(\tau,\cdot)v\bigr\|_{H^1}\leq C\|v\|_{H^2},$$
where $C>0$ is independent of $\tau\in[0,1]$. Furthermore, for every $\eta>0$,
$$\|v\|_{H^2}\leq\eta\|v\|_{H^3}+C_\eta\|v\|_{H^1}.$$
Using the equivalence of the graph norm, we obtain
$$\bigl\|p(\tau,\cdot)v_x+q(\tau,\cdot)v\bigr\|_{H^1}\leq \eta\|\mathcal A_{0,\mathcal V}v\|_{H^1}+C_\eta\|v\|_{H^1}.$$
Thus, the lower-order operator
$$v\longmapsto p(\tau,\cdot)v_x+q(\tau,\cdot)v$$
is relatively $\mathcal A_{0,\mathcal V}$-bounded with relative bound zero, uniformly with respect to $\tau\in[0,1]$. By the standard perturbation theorem for sectorial operators, the family $\left\{\mathcal A_{\mathcal V}(\tau)\right\}_{\tau\in[0,1]}$
is uniformly sectorial on $\mathcal V$.

Combining this fact with \eqref{Eq_Common_domain_Lambda} and
\eqref{Eq_A_C1_Lambda_V}, the standard nonautonomous maximal $L^2$-regularity theorem for uniformly sectorial operators with a common domain applies to
$$t\longmapsto\mathcal A_{\mathcal V}(\epsilon t).$$
\noindent
\textbf{Step 2: Regularity of the nonlinear term.}

From \eqref{Eq_Initial_regularities_z}, $z\in C\bigl([0,T];H_0^1(0,1)\bigr),$ and hence $z\in L^\infty\bigl(0,T;H_0^1(0,1)\bigr).$ By the one-dimensional Sobolev embedding
$$H_0^1(0,1)\hookrightarrow L^\infty(0,1),$$
we obtain
\begin{equation}\label{Eq_z_Linfty_Q}
z\in L^\infty(Q),
\qquad
Q:=(0,T)\times(0,1).
\end{equation}
Furthermore, $z_x\in L^\infty\bigl(0,T;L^2(0,1)\bigr)$ and $z_{xx}\in L^2(Q).$ Expanding $F$ and collecting the coefficients, we can write
\begin{equation}\label{Eq_G_structural_form}
\mathcal G_\epsilon(t,z)
=
\xi(\epsilon t)z
+
\sum_{j=1}^{\delta}
b_j(\epsilon t,x)z^jz_x
+
\sum_{k=1}^{2\delta+1}
c_k(\epsilon t,x)z^k
+
h_\epsilon(t,x),
\end{equation}
where $h_\epsilon(t,x):=-\epsilon\bar u_\tau(\epsilon t,x).$ The coefficients $b_j$ and $c_k$ are finite sums of products of $\bar u$ and $\bar u_x$. Therefore,
$$\sup_{\tau\in[0,1]}
\left(
\|b_j(\tau,\cdot)\|_{W^{1,\infty}}
+
\|c_k(\tau,\cdot)\|_{W^{1,\infty}}
\right)
\leq C.$$
Since
$\xi\in C\left([0,1]; \mathcal L\bigl(L^2(0,1),H_0^1(0,1)\bigr)\right)$ and $[0,1]$ is compact, 
$$\sup_{\tau\in[0,1]}\|\xi(\tau)\|_{\mathcal L(L^2,H_0^1)}<\infty.$$
Therefore,
$$\|\xi(\epsilon t)z(t)\|_{H_0^1}\leq C\|z(t)\|_{L^2},$$
and consequently
\begin{equation}\label{Eq_Feedback_H01}
\xi(\epsilon\cdot)z
\in
L^2\bigl(0,T;H_0^1(0,1)\bigr).
\end{equation}
For $1\leq k\leq 2\delta+1$, $\partial_x(c_kz^k)=(\partial_xc_k)z^k+kc_kz^{k-1}z_x.$ Using \eqref{Eq_z_Linfty_Q} and $z_x\in L^\infty\bigl(0,T;L^2(0,1)\bigr),$ we obtain
\begin{equation}\label{Eq_Polynomial_H1}
c_k(\epsilon t,\cdot)z^k
\in
L^2\bigl(0,T;H^1(0,1)\bigr).
\end{equation}
We next consider the derivative nonlinearities. For $1\leq j\leq\delta$, $\partial_x(z^jz_x)=jz^{j-1}z_x^2+z^jz_{xx}.$ By \eqref{Eq_z_Linfty_Q}, $z^jz_{xx}\in L^2(Q).$ It remains to prove that $z_x^2\in L^2(Q).$ For almost every $t\in(0,T)$, $z_x(t,\cdot)\in H^1(0,1).$ No homogeneous boundary condition on $z_x$ is required. The one-dimensional Gagliardo-Nirenberg inequality on $H^1(0,1)$ gives
\begin{equation}\label{Eq_GN_zx}
\|z_x(t)\|_{L^4}^4
\leq
C\|z_x(t)\|_{L^2}^3
\|z_{xx}(t)\|_{L^2}
+
C\|z_x(t)\|_{L^2}^4.
\end{equation}
Integrating \eqref{Eq_GN_zx} over $(0,T)$ and using the
Cauchy-Schwarz inequality, we obtain
\begin{align*}
\int_0^T
\|z_x(t)\|_{L^4}^4\,dt
&\leq
C
\|z_x\|_{L^\infty(0,T;L^2)}^3
\int_0^T
\|z_{xx}(t)\|_{L^2}\,dt+CT\|z_x\|_{L^\infty(0,T;L^2)}^4
\\
&\leq
C\sqrt{T}\,
\|z_x\|_{L^\infty(0,T;L^2)}^3
\|z_{xx}\|_{L^2(Q)}+CT\|z_x\|_{L^\infty(0,T;L^2)}^4<\infty.
\end{align*}
Hence $z_x\in L^4(Q),z_x^2\in L^2(Q).$ It follows that $z^jz_x\in L^2\bigl(0,T;H^1(0,1)\bigr).$
Since $b_j(\tau,\cdot)\in W^{1,\infty}(0,1),$
we conclude that
\begin{equation}\label{Eq_Derivative_H1}
b_j(\epsilon t,\cdot)z^jz_x
\in
L^2\bigl(0,T;H^1(0,1)\bigr).
\end{equation}
Finally, $h_\epsilon=-\epsilon\bar u_\tau(\epsilon t,\cdot)\in L^\infty\bigl(0,T;H^1(0,1)\bigr).$
Since $\bar u(\tau,0)=\bar u(\tau,1)=0$ for every $\tau\in[0,1],$ differentiating these identities with respect to $\tau$ gives
$$\bar u_\tau(\tau,0)=\bar u_\tau(\tau,1)=0.$$
Consequently,
\begin{equation}\label{Eq_Forcing_H01}
h_\epsilon
\in
L^2\bigl(0,T;H_0^1(0,1)\bigr).
\end{equation}
The remaining nonlinear terms also have zero boundary trace. Indeed, $z^jz_x=0$ at $x=0,1,$ because $z=0$ there, and similarly $z^k=0$ at $x=0,1$. Combining
\eqref{Eq_Feedback_H01},
\eqref{Eq_Polynomial_H1},
\eqref{Eq_Derivative_H1}, and
\eqref{Eq_Forcing_H01}, we obtain
\begin{equation}\label{Eq_G_H01}
\mathcal G_\epsilon(\cdot,z)
\in
L^2\bigl(0,T;H_0^1(0,1)\bigr).
\end{equation}
\noindent
\textbf{Step 3: Maximal $L^2$-regularity and identification of the
solution.}

Set $f(t):=\mathcal G_\epsilon(t,z(t)).$
By \eqref{Eq_G_H01}, $f\in L^2(0,T;\mathcal V).$ Consider the linear nonautonomous problem in $\mathcal V$:
\begin{equation}\label{Eq_Auxiliary_linear_problem}
\begin{cases}
w_t(t)
=
\mathcal A_{\mathcal V}(\epsilon t)w(t)
+
f(t),
& t\in(0,T),\\
w(0)=0.
\end{cases}
\end{equation}
Since $0\in(\mathcal V,\Lambda)_{1/2,2},$ the nonautonomous maximal $L^2$-regularity theorem yields a unique solution satisfying
\begin{equation}\label{Eq_w_maximal_regularity}
w\in
H^1(0,T;\mathcal V)
\cap
L^2(0,T;\Lambda).
\end{equation}
Since $\mathcal V\hookrightarrow\mathcal H,\Lambda\hookrightarrow D,$ we also have $w\in
H^1(0,T;\mathcal H)\cap L^2(0,T;D).$ Thus, $w$ is a strong solution in $\mathcal H$ of
$$w_t(t)=\mathcal A(\epsilon t)w(t)+f(t),\qquad w(0)=0.$$
On the other hand, $z$ is a strong solution in $\mathcal H$ of the same linear problem because $f(t)=\mathcal G_\epsilon(t,z(t)).$ Let $y:=w-z.$ Then $y\in
H^1(0,T;\mathcal H)\cap L^2(0,T;D)$ and
\[
\begin{cases}
y_t(t)
=
\mathcal A(\epsilon t)y(t),
& t\in(0,T),\\
y(0)=0.
\end{cases}
\]
Taking the $L^2(0,1)$-inner product with $y(t)$, we obtain, for almost every $t\in(0,T)$,
\begin{align*}
\frac12
\frac{d}{dt}
\|y(t)\|_{L^2}^2
&=
-\|y_x(t)\|_{L^2}^2
+
\int_0^1
p(\epsilon t,x)
y_x(t,x)y(t,x)\,dx
\\
&\quad+
\int_0^1
q(\epsilon t,x)
|y(t,x)|^2\,dx.
\end{align*}
Since $p$ and $q$ are uniformly bounded, Young's inequality gives
$$\frac12
\frac{d}{dt}
\|y(t)\|_{L^2}^2
\leq
-\frac12
\|y_x(t)\|_{L^2}^2
+
C\|y(t)\|_{L^2}^2.
$$
Dropping the nonpositive term, we obtain
$$\frac{d}{dt}
\|y(t)\|_{L^2}^2
\leq
C\|y(t)\|_{L^2}^2.$$
Since $y(0)=0$, Gronwall's inequality yields $y\equiv0.$ Therefore, we get $w=z.$
Combining this identity with \eqref{Eq_w_maximal_regularity}, we obtain
\begin{equation}\label{Eq_z_improved_regularity}
z\in
H^1(0,T;\mathcal V)
\cap
L^2(0,T;\Lambda).
\end{equation}
In particular, $z_t\in L^2\bigl(0,T;H_0^1(0,1)\bigr)$ and $z\in L^2(0,T;H^3(0,1)).$ Finally, the parabolic trace theorem applied to
\eqref{Eq_z_improved_regularity} gives
$$z\in
C\left(
[0,T];
(\mathcal V,\Lambda)_{1/2,2}
\right).$$
By the standard real-interpolation theorem for Sobolev spaces with
boundary conditions,
$$\bigl(H_0^1(0,1),\Lambda\bigr)_{1/2,2}
=
H^2(0,1)\cap H_0^1(0,1).$$
Indeed, the interpolation space has Sobolev order $2$. At this regularity level, the zeroth-order boundary condition is retained, whereas the second-order trace is not defined for general $H^2(0,1)$-functions, and hence the condition $v_{xx}=0$ is not inherited. Therefore,
$$z\in
C\bigl(
[0,T];
H^2(0,1)\cap H_0^1(0,1)
\bigr).$$
Consequently, $z(t)\in D(\mathcal A(\epsilon t))$ for every $t\in[0,T].$ This completes the proof.
\end{proof}

 %\bibliographystyle{abbrv}
  %\bibliography{xyz}

\end{document}